\documentclass{article}

\usepackage{amsthm}
\usepackage{amsmath}
\usepackage{amsfonts}
\usepackage{mathtools}
\usepackage{stix}
\usepackage{graphicx}
\usepackage{dsfont}
\usepackage{bbm}
\usepackage{tikz}
\usepackage{color}
\usepackage{stmaryrd}
\usepackage{appendix}
\usepackage{verbatim}
\usepackage{tikz-cd}
\usepackage[pdftex,color]{changebar}
\usepackage{thmtools}
\usepackage{thm-restate}

\usepackage{soul}
\newcommand{\rfname}[1]{{#1}}

\declaretheorem[name=Theorem,numberwithin=section]{theorem}
\declaretheorem[name=Lemma,numberwithin=section]{lemma}

\DeclareMathOperator*{\argmax}{arg\,max}
\DeclareMathOperator*{\argmin}{arg\,min}

\newtheorem{definition}[theorem]{Definition}
\newtheorem{example}[theorem]{Example}

\newtheorem{remark}[theorem]{Remark}

\begin{document}

\DeclareRobustCommand{\Triangle}
{%
\begin{tikzpicture}
    \draw (0,0) [fill=white] circle [radius=0.05] --(0.1,0.2) [fill=white] circle [radius=0.05] --  (0.2,0) [fill=white] circle [radius=0.05] -- (0,0) ;      
    \end{tikzpicture}
}

\DeclareRobustCommand{\ltrianglea}
{%
\begin{tikzpicture}
\fill[black] (0,0) circle [radius=0.05];
\fill[black] (0.1,0.2) circle [radius=0.05];
\draw [fill=white] (0.1,0.2)--(0.2,0) circle [radius=0.05] -- (0,0);
\end{tikzpicture}
}

\DeclareRobustCommand{\ltriangleb}
{%
\begin{tikzpicture}
\fill[black] (0.1,0.2) circle [radius=0.05] ;
\fill[black] (0.2,0) circle [radius=0.05] ;
\draw [fill=white] (0.2,0)--(0,0) circle [radius=0.05] -- (0.1,0.2);
\end{tikzpicture}
}

\DeclareRobustCommand{\ltrianglec}
{%
\begin{tikzpicture}
\fill[black] (0.2,0) circle [radius=0.05];
\fill[black] (0,0) circle [radius=0.05];
\draw [fill=white] (0,0)--(0.1,0.2) circle [radius=0.05] -- (0.2,0);
\end{tikzpicture}
}

\DeclareRobustCommand{\ltriangled}
{%
\begin{tikzpicture}
\fill[black] (0,0) circle [radius=0.05];
\fill[black] (0.1,0.2) circle [radius=0.05];
\draw [fill=white] (0.1,0.2)--(0.2,0) circle [radius=0.05] -- (0,0);
\end{tikzpicture}
}

\DeclareRobustCommand{\edgetriangle}
{%
W^*([\begin{tikzpicture}
   \draw (0,0) [fill=white] circle [radius=0.05] --(0,0.2) [fill=white] circle [radius=0.05]    ;  
   \end{tikzpicture},\begin{tikzpicture}
    \draw (0,0) [fill=white] circle [radius=0.05] --(0.1,0.2) [fill=white] circle [radius=0.05] --  (0.2,0) [fill=white] circle [radius=0.05] -- (0,0) ;      
    \end{tikzpicture}],[\rho, \tau])
}

\DeclareRobustCommand{\feasibleregionmedgetriangle}
{%
S^{(2)}([\begin{tikzpicture}
   \draw (0,0) [fill=white] circle [radius=0.05] --(0,0.2) [fill=white] circle [radius=0.05]    ;  
   \end{tikzpicture},\begin{tikzpicture}
    \draw (0,0) [fill=white] circle [radius=0.05] --(0.1,0.2) [fill=white] circle [radius=0.05] --  (0.2,0) [fill=white] circle [radius=0.05] -- (0,0) ;      
    \end{tikzpicture}],[\rho, \tau])
}

\DeclareRobustCommand{\feasibleregionedgetriangle}
{%
S_\mathbb{R}([\begin{tikzpicture}
   \draw (0,0) [fill=white] circle [radius=0.05] --(0,0.2) [fill=white] circle [radius=0.05]    ;  
   \end{tikzpicture},\begin{tikzpicture}
    \draw (0,0) [fill=white] circle [radius=0.05] --(0.1,0.2) [fill=white] circle [radius=0.05] --  (0.2,0) [fill=white] circle [radius=0.05] -- (0,0) ;      
    \end{tikzpicture}],[\rho, \tau])
}

\DeclareRobustCommand{\Edge}
{%
\begin{tikzpicture}
   \draw (0,0) [fill=white] circle [radius=0.05] --(0,0.2) [fill=white] circle [radius=0.05]    ;  
   \end{tikzpicture}
}

\DeclareRobustCommand{\ErdosRenyiedgetriangle}
{%
W^*([\begin{tikzpicture}
   \draw (0,0) [fill=white] circle [radius=0.05] --(0,0.2) [fill=white] circle [radius=0.05]    ;  
   \end{tikzpicture},\begin{tikzpicture}
    \draw (0,0) [fill=white] circle [radius=0.05] --(0.1,0.2) [fill=white] circle [radius=0.05] --  (0.2,0) [fill=white] circle [radius=0.05] -- (0,0) ;      
    \end{tikzpicture}],[\rho, \tau])
}

\numberwithin{equation}{section}

\newcommand{\abs}[1]{\lvert#1\rvert}

\newcommand{\blankbox}[2]{%
  \parbox{\columnwidth}{\centering
    \setlength{\fboxsep}{0pt}%
    \fbox{\raisebox{0pt}[#2]{\hspace{#1}}}%
  }%
}

\newtheorem{problem}{Problem}
\newtheorem{Remark}{Remark}
\newtheorem{algorithm}{Algorithm}

\newcommand{\sfmat}{A}
\newcommand{\extremestats}[1]{\hbox{Extr}(#1)}
\newcommand{\cristats}[2]{\hbox{Cri}^{(#1)}(#2)}
\newcommand{\cristatspi}[3]{\hbox{Cri}^{(#1)}(#2,#3)}

\title{Constrained Multi-Relational Hyper-Graphons with Maximum Entropy}

\newcommand{\jan}[1]{\textcolor{blue}{{#1}}}
\newcommand{\juan}[1]{\textcolor{red}{{#1}}}
\newcommand{\phdExt}[1]{\textcolor{yellow}{\textit{[}}{#1}\textcolor{yellow}{\textit{]}}}
\newcommand{\janfoot}[1]{\textcolor{blue}{{\footnote{\jan{#1}}}}}
\newcommand{\juanfoot}[1]{\textcolor{red}{{\footnote{\juan{#1}}}}}
\newcommand{\janmargin}[1]{\marginpar{\jan{{#1}}}}
\newcommand{\juanmargin}[1]{\marginpar{\juan{{#1}}}}

\newcommand{\graphonspaceu}{\widetilde{\mathcal{W}}^{(r,d)} }
\newcommand{\graphonspaceuu}[2]{\widetilde{\mathcal{W}}^{(#1,#2)} }
\newcommand{\graphonspacel}{\mathcal{W}^{(r,d)} } 
\newcommand{\graphonspacell}[2]{\mathcal{W}^{(#1,#2)} } 
\newcommand{\realgraphonspaceu}{\widetilde{\mathcal{W}}_{\mathbb{R}}^{(r,d)} }
\newcommand{\realgraphonspacel}{\mathcal{W}_{\mathbb{R}}^{(r,d)} } 
\newcommand{\realstepfunctionspacel}[1]{\mathcal{W}^{({#1,r,d})}_{\mathbb{R}}}
\newcommand{\realstepfunctionspaceu}[1]{{\widebridgeabove{\mathcal{W}}}^{({#1,r,d})}_{\mathbb{R}}}
\newcommand{\realstepfunctionspaceupi}[2]{{\widebridgeabove{\mathcal{W}}}^{({#1},r,d)}_{\mathbb{R}}({#2})}
\newcommand{\realstepfunctionspacelpi}[2]{{\mathcal{W}}^{({#1},r,d)}_{\mathbb{R}}(#2)}

\newcommand{\realfeasibleregionl}{S^{(r,d)}_{\mathbb{R}}(\mathcal{F},u)}
\newcommand{\realfeasibleregionmu}[1]{{{\widebridgeabove{S}}^{(r,d)}_{\mathbb{R}}^{(#1)}(\mathcal{F},u)}}
\newcommand{\realfeasibleregionml}[1]{{{S}_{\mathbb{R}}^{(#1,r,d)}(\mathcal{F},u)}}
\newcommand{\realfeasibleregionmlpi}[2]{{S_{\mathbb{R}}^{(#1,r,d)}(\mathcal{F},u,#2)}}

\newcommand{\graphonPermutationSpace}{\Sigma}
\newcommand{\graphonPermutationSpacem}[1]{\Sigma^{({#1})}}

\newcommand{\eqclass}[1]{[{#1}]_{\sim}}
\newcommand{\convexcombm}[1]{P^{({#1})}}
\newcommand{\convexcombmF}[1]{P^{({#1})}(\mathcal{F})}
\newcommand{\convexcombmFu}[1]{P^{({#1})}(\mathcal{F},u)}

\newcommand{\lebesgue}{\nu}  
\newcommand{\partialF}{\partial F}

\newcommand{\criticalpoints}[1]{Cr^{(#1)}(\mathcal{F},f_s)}
\newcommand{\criticalpointspi}[2]{Cr^{(#1)}(\mathcal{F},#2,f_s)}
\newcommand{\criticalpointspiI}[2]{Cr^{(#1)}(\mathcal{F},#2,I)}
\newcommand{\minimalpoints}[1]{Cq^{(#1)}(\mathcal{F},f_s)}

\newcommand{\stepfunctionspacel}[1]{{\mathcal{W}}^{({#1,r,d})}}
\newcommand{\stepfunctionspacelpi}[2]{{\mathcal{W}}^{({#1,r,d})}(#2)}

\newcommand{\optimalsolutionf}[1]{W^{*}(\mathcal{F},u,#1)}
\newcommand{\optimalsolutionmf}[2]{W^{*(#1)}(\mathcal{F},u,#2)}
\newcommand{\optimalsolutionmpif}[2]{W^{*(#1)}(\mathcal{F},u,#2)}

\newcommand{\marginalpolytopem}[1]{T^{(#1)}(\mathcal{F})}
\newcommand{\totalmarginalpolytope}{T(\mathcal{F})}

\newcommand{\reg}[1]{\hbox{Reg}(#1)}
\newcommand{\regstats}[1]{\hbox{Reg}(#1)}
\newcommand{\regstatsm}[2]{\hbox{Reg}^{(#1)}(#2)}
\newcommand{\regstatsmz}[2]{\hbox{Reg}_0^{(#1)}(#2)}
\newcommand{\regstatspi}[3]{\hbox{Reg}^{(#1)}(#2,#3)}

\newcommand{\realfeasibleregionmq}[3]{{S}^{({#1,r,d})}_{\mathbb{R}}({#2},{#3})}
\newcommand{\crealfeasibleregionmq}[3]{{\overline{S}}^{({#1,r,d})}_{\mathbb{R}}({#2},{#3})}
\newcommand{\realfeasibleregionmllpi}[4]{{{S}}^{({#1,r,d})}_{\mathbb{R}}({#2},{#3},{#4})}

\newcommand{\optimalsolution}{W^{*(r,d)}(\mathcal{F},u)}
\newcommand{\realoptimalsolution}[1]{W_{\mathbb{R}}^{*(r,d)}(\mathcal{F},u, #1)}
\newcommand{\realoptimalsolutionm}[2]{W_{\mathbb{R}}^{*(#1,r,d)}(\mathcal{F},u, #2)}

\newcommand{\feasibleregion}{\widetilde{S}^{(r,d)}(\mathcal{F},u)}
\newcommand{\realfeasibleregionu}{\widetilde{S}_{\mathbb{R}}(\mathcal{F},u)}
\newcommand{\feasibleregionml}[1]{{{S}^{(#1,r,d)}(\mathcal{F},u)}}
\newcommand{\randfeasibleregionml}[1]{{{S}_{(0,1)}^{(#1)}(\mathcal{F},u)}}

\author{Juan Alvarado\footnote{jaoxxx3@gmail.com} \thanks{ KU Leuven, Department of Computer Science}  \and  Jan Ramon\footnote{jan.ramon@inria.fr} \thanks{INRIA Lille}  \and Yuyi Wang \footnote{yuwang@ethz.ch} \thanks{ETH Zurich, Switzerland \& CRRC Zhuzhou Institute.}  }



\date{}


\maketitle

\begin{abstract}
  
This work has two contributions. The first one is extending the Large Deviation Principle for uniform hyper-graphons from Lubetzky and Zhao~\cite{lubetzky2015replica} to the multi-relational setting where each hyper-graphon can have different arities. This extension enables the formulation of the most typical possible world in Relational Probabilistic Logic with symmetric relational symbols in terms of entropy maximization subjected to constraints of quantum sub-hypergraph densities.

The second contribution is to prove the most typical constrained multi-relational hyper-graphons (the most typical possible worlds) are computable by proving the solutions of the maximum entropy subjected by quantum sub-hypergraph densities in the space of multi-relational hyper-graphons are step functions except for in a zero measure set of combinations of quantum hyper-graphs densities with multiple relations. 
 This result proves in a very general context the conjecture formulated by Radin et al.\ \cite{radin2014asymptotics} that states the constrained graphons with maximum entropy are step functions.

\end{abstract}

\noindent {\small  keywords: Graphon Theory and Principle of Maximum Entropy and Large Random Graph and Exponential Random Graph Model and Constrained optimization and Differential Geometry and Statistical Relational Learning }

\section{Introduction}
\label{sec:intro}

In the space of graphons, the problem of finding the most typical random graphs satisfying subgraph density constraints is formulated in terms of entropy maximization which is described as follows \cite{radin2013phase}. Let $\mathcal{F}$ be an ordered set of simple graphs and $u \in (0,1)^{|\mathcal{F}|} $  be a vector of prescribed subgraph densities.
\begin{equation*}
    W^{*}(\mathcal{F},u) = \argmax_{W \in S(\mathcal{F},u)} -I(W)
\end{equation*}
where 
\begin{equation*}
    S(\mathcal{F},u) = \{ W \in \mathcal{W} \, | \, t(\mathcal{F}_i,W ) = u_i \mbox{ for all } i = 1, \cdots , |\mathcal{F}|| \} 
\end{equation*}
and $-I$ is the Shannon entropy on graphons and $\mathcal{W}$ the space of labeled graphons\footnote{This is formulation originally was established in the space of unlabeled graphons, but for the sake of simplicity, we formulate in terms of labeled graphons.}.  Here, we are interested in extending the formulation to the most typical multi-relational hyper-graphs regarding entropy maximization.


Lubetzky and Zhao \cite{lubetzky2015replica} extend the maximum entropy principle of graphons to uniform hyper-graphons and prove the counting lemma holds for linear hyper-graphs. Here, we further extend this principle to multi-relational hyper-graphons (Section \ref{sec:hypergraphons}) following the same ideas of our previous work \cite{alvarado2022limits} and the contraction principle of Large Deviation Principles \cite{dembo2009large}. 
There are two motivations for establishing this extension.
\begin{itemize}
    \item To establish a micro-canonical semantics of possible worlds of probabilistic logics whose signatures are only relational symbols describing symmetric relations, the symbols $\wedge$ and $\implies$ and universal quantifiers. In this probabilistic logic, each axiom is associated with a probability number.

    \item To prove the problem of finding the most typical worlds (the micro-canonical distribution) satisfying the probabilistic axioms can be computable by proving the solutions of the maximum entropy problem in the space of multi-relational uniform hyper-graphons are step functions. This result proves a general version of the conjecture raised by Radin et al.\ \cite{radin2014asymptotics}, which states $W^*(\mathcal{F},u)$ is a step function.

\end{itemize}

Since the formulation of this conjecture, it has been proved for several special cases. The conjecture was rigorously confirmed by Kenyon et al.\ \cite{kenyon2017multipodal} when $\mathcal{F}$ contains the edge and a $k$-stars graph. Moreover Kenyon et al.\ in \cite{kenyon2016bipodal} confirm the conjecture is true  when $\mathcal{F} = [\begin{tikzpicture}
   \draw (0,0) [fill=white] circle [radius=0.05] --(0,0.2) [fill=white] circle [radius=0.05]    ;  
   \end{tikzpicture},F]$ and $u=[\rho, \tau]$ when  $\tau$ is slightly higher than $\rho^{|F|}$,  where $F$ is a simple graph and $|F|$ is the number of edges of $F$.  Moreover, Kenyon et al.\ in \cite{kenyon2017phases} show that the number of steps that $W^{*}(\mathcal{F},u)$ has may not be bounded when the vector $u$ gets arbitrarily close to an extremal sufficient statistics vector.
in the case of edge and triangle constraints. 

Roughly speaking,  our approach to proving the conjecture is based on the observation any step function can be seen as a function and a finitely dimensional vector of parameters that describe the step function. We study the solutions $W^{*}(\mathcal{F},u)$ restricted to the space of step functions of size $m$, these solutions are denoted by $W^{*(m)}(\mathcal{F},u)$.   Using elementary results of differential geometry and constrained optimization, we prove when the solutions $W^{*(m)}(\mathcal{F},u)$ are embedded in a higher dimensional vector space whose vectors parameterize step functions of size $m+1$ then $W^{*(m)}(\mathcal{F},u)$ are also solutions in the extended space. In other words, we prove $W^{*(m)}(\mathcal{F},u) = W^{*(m+1)}(\mathcal{F},u)$.

\subsection{Outline}

Thus, this paper is organized as follows. Section \ref{sec:mathematicalbacground} provides results from \rfname{differential geometry} and constrained optimization to support the proofs. Section \ref{sec:hypergraphons} develops the notion of multi-relational hyper-graphons. 
Section \ref{sec:stepfunctionspace} studies the properties of step functions in the space of multi-relational hyper-graphons providing formulas to compute partial derivatives of subgraph densities when the step functions are finitely parameterized and defining the split map on step functions which embeds any stepfuncion of lower dimensional space into a higher one. Section  \ref{sec:uniformmulti-relationalhypergraphons} extends key results of graphon theory as Szemeredi Regularity Lemma, Compactness, Counting Lemma, and Large Deviation Principle to multi-relational uniform hyper-graphons where each relation has the same arity. Section \ref{sec:nonuniformhypergraphons} extends the results developed in Section \ref{sec:uniformmulti-relationalhypergraphons}, we obtain the large deviation principle for Erd\H os-Renyi version of multi-relational random hyper-graphs and prove the most typical random hyper-graphs constrained by quantum subgraph densities are solution of a maximum entropy problem in the space of multi-relational hyper-graphs.  Section \ref{sec:possibleworlds} provides an example of modeling possible worlds in a probabilistic logic using the most typical multi-relational random hyper-graphs.  Section \ref{sec:computability} proves that the most typical multi-relational random hyper-graphs are step functions.
 Finally, Section \ref{sec:conclusions}  provides concluding remarks and some open problems.  The Appendix contains the proofs of theorems and lemmas.

\subsection{Notations}
\label{sec:prelim}

Let us start with some common notations. 
We denote by $\mathbbm{1}_X$ the indicator function, i.e., for any $x$, if $x\in X$ then $\mathbbm{1}_X(x)=1$ else $\mathbbm{1}_X(x)=0$.
For any positive integer $n$, we denote by $[n]$ the set of all positive integers smaller than or equal to $n$. Let $S$ be a set. Then any sequence in $S$ is denoted by $(a_i)$ where $a_i \in S$.  

We denote by $\nu$ the  Lebesgue measure on reals. 
We denote by $\graphonPermutationSpace$  all bijective measure-preserving maps and by  $\Sigma_m$ the symmetric group on $[m]$. We use the standard notation of  $A^\circ$, $\overline{A}$ and $\partial A$ for the topological interior, closure, and boundary of the set $A$ and by $|A|$ the cardinality of a finite set $A$.  
The set of finite, simple, and non-isomorphic graphs is denoted by  $ \mathcal{G}$. A multi-relational graph $F$ is defined by $(V,E_1, \cdots, E_r)$ where $V$ is the set of vertices and $(E_1, \cdots, E_r)$ are the set of relations. The number of vertices is denoted $|F|$.

Let $F: \mathbb{R}^n \to \mathbb{R}^m $ be a smooth function then $J_x(F)$ denotes the Jacobian matrix of $F$ at $x$. Let $f: \mathbb{R}^n \to \mathbb{R}$ be a smooth function, $H_{x} f$ denotes  the Hessian matrix of $f$ at $x$.

\begin{table}[htb]
\begin{tabular}{|ll|}
\hline
Symbol & Meaning \\\hline
$\realgraphonspacel$ & All labeled graphon with real-valued   \\ 
$\realgraphonspaceu$ & All unlabeled graphons the quotient of $\realgraphonspacel$ by elements of $\Sigma$  \\ 
$\graphonspacel$ & All labeled graphon with $[0,1]$-valued   \\ 
$\graphonspaceu$ & The quotient of $\graphonspacel$ by elements of $\Sigma$    \\ 
$(\sfmat,\pi)$& A step function with partition   $\pi$(Sec.\ref{sec:stepfunctionspace})  \\ 
$\realstepfunctionspacel{m}$ & All $m$-step functions with real-valued (Sec. \ref{sec:stepfunctionspace})        \\ 
$\realstepfunctionspacelpi{m}{\pi}$ & All $m$-step functions with real-valued with a fixed partition vector $\pi$\\ 
$\realfeasibleregionl$& All unlabeled graphons with real-valued satisfying constraints  (\ref{eq:feasibleregion}) \\
$\realfeasibleregionml{m}$& All $m$-step functions with real-valued satisfying constraints.\\
$\realfeasibleregionmlpi{m}{\pi}$& All $m$-step functions with real-valued satisfying constraints and $\pi$ fixed.\\
$t(F,W)$  & Subgraph density of $F$ from the graphon $W$  (\ref{eq:subgraphdensity})\\ $t_{x_1 \cdots x_k }(F,W)$ & Partial subgraph density  (\ref{eq:condsubgraphdensity})\\ 
\hline 
\end{tabular}
\caption{\label{tab:notations} Overview of important notations in this paper}
\end{table}

\newcommand{\varQuot}[1]{{{\color{brown}{[varQuot] {#1}}}}}
\newcommand{\varQuotDelStart}{\cbcolor{brown}\begin{changebar}}
\newcommand{\varQuotDelEnd}{\end{changebar}}

\section{Mathematical Background}
\label{sec:mathematicalbacground}

This section reviews results from Differential Geometry and Optimization on constrained sets.

\subsection{Riemannian Geometry}

Riemannian Geometry endows a smooth manifold $M$ with a geometric structure by defining a positive definite bilinear form $g_p: T_p M \otimes T_p M \to \mathbb{R}$ at each point $p \in M$, such that the mapping $p \to g_p(X; Y)$ is smooth. This structure allows us to measure distances and angles on the manifold.

By utilizing the Riemannian metric $g_p(\cdot, \cdot)$, we can define the length of a curve $\gamma: [a, b] \to M$ in the following manner: Let $g_{p;ij}$ denote the components of the Riemannian metric, and let $y^{(i)}$ represent the coordinates of the curve $\gamma$. Then, the path length $L(\gamma)$ is given by the integral

\begin{equation*}
L(\gamma) = \int_a^b \sqrt{\sum_{ij} g_{\gamma(s);ij} \frac{d\gamma^{(i)}}{ds} \frac{d\gamma^{(j)}}{ds}}  ds.
\end{equation*}

This formulation quantifies the length of a curve on the manifold $M$ based on the underlying Riemannian metric. It enables us to compute distances and study the geometry of $M$ through the lens of Riemannian Geometry.

\subsubsection{Exponential Map on Riemannian Manifolds.}
 
In the context of Riemannian manifolds, geodesic curves serve as paths with minimal distance between points. The notion of a geodesic curve is formalized as follows:

\begin{definition}[Definition $1.4.2$ in \cite{jost2008riemannian}]
Let $M$ be a Riemannian manifold. A curve $\gamma: [0, a] \to M$ is said to be a geodesic if it satisfies the second-order differential equation:
\begin{equation}
\label{geodesicequation}
\frac{d^2\gamma^{(i)}}{ds^2} + \Gamma^{i}{j,k} \frac{d\gamma^{(j)}}{ds} \frac{d\gamma^{(k)}}{ds} = 0, \quad i, j, k = 1, \ldots, m,
\end{equation}
where $\gamma^{(i)}$ denotes the $i$-th component of $\gamma$ with respect to a local coordinate system, and $\Gamma^{i}{j,k}$ are the Christoffel symbols associated with the Levi-Civita connection.
\end{definition}

A fundamental tool in studying Riemannian manifolds is the \emph{Exponential Map}, denoted as $Exp_p: U \to M$, which provides a correspondence between tangent vectors and points on the manifold. Specifically, for each point $p \in M$, there exists an open neighborhood $U$ of $0 \in T_p M$ such that $Exp_p$ is defined as follows:

\begin{equation}
\label{eq:exponentialmap}
Exp_p(v) = \gamma_{p,v}(1),
\end{equation}
where $\gamma_{p,v}(1)$ represents the unique geodesic curve that satisfies the initial conditions $\gamma_{p,v}(0) = p$ and $\gamma'_{p,v}(0) = v$.

In other words, the Exponential Map takes a tangent vector $v \in T_p M$ and "exponentiates" it to a point $q = Exp_p(v) \in M$ along the geodesic that starts at $p$ with the velocity $v$.

A crucial property, as established by Proposition 20.8 in \cite{lee2003smooth}, is that $Exp_p$ is a local diffeomorphism when restricted to an open neighborhood of $0 \in T_p M$. This property ensures the existence of a well-defined inverse map, which is essential for various applications in Riemannian Geometry.

\subsection{Theory of Constrained Optimization}
\label{sec:optimization}

Here, we assume  $h: \mathbb{R}^n \to \mathbb{R}^k$ is differentiable. Then, we define the constrained set
\begin{equation}
\label{eq:constrainedset}
M_h = \{x \in \mathbb{R}^n \, | \, h(x) = 0\}.
\end{equation}
We review the Karush-Kuhn-Tucker (KKT) conditions for which any $x^* \in M_h$ is a local solution of $\min_{x \in M_h} f(x)$. The contents of this section are from 
\cite{jorge2006numerical}.

\begin{definition}[Lagrangian of a function]
Let $f: \mathbb{R}^n \to \mathbb{R}$ be a differentiable function and let $\beta \in \mathbb{R}^k$. Then the Lagrangian function of  $f$ on $x \in M_h$ the constraints $h=0$,
\begin{equation*}
    \mathcal{L}(f)(x,\beta )  = f(x) - \beta^\top h(x).
\end{equation*}
\end{definition}

\begin{theorem}[First-Order Necessary Conditions]
 If $x^* \in M_h$ is a local minimum of $\min_{x \in M_h} f(x)$ then there is a Lagrange multiplier vector $\beta^* \in \mathbb{R}^k$ such that  
\begin{equation*}
 \nabla_{x^*} \mathcal{L}(f)(x, \beta^*) = 0,    
\end{equation*}
 thus 
 \begin{equation*}
\nabla_{x^*} f = \beta^{*\top} \nabla_{x^*} h.
 \end{equation*}
 Hence $x^* \in M_h$ must be a critical point of $f:M_h \to \mathbb{R}$.
\end{theorem}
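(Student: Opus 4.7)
The plan is to show that $\nabla_{x^*} f$ lies in the row span of the Jacobian $J_{x^*}(h)$; once this is established, the multiplier vector $\beta^*$ is simply the coefficient vector of $\nabla_{x^*} f$ in that row span, and $\nabla_{x^*}\mathcal{L}(f)(x,\beta^*)=0$ follows immediately.

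First I would argue that $\nabla_{x^*} f$ annihilates every tangent direction in $M_h$. Concretely, for any smooth curve $\gamma:(-\epsilon,\epsilon)\to M_h$ with $\gamma(0)=x^*$, the fact that $x^*$ is a local minimum means $f\circ\gamma$ attains a local minimum at $t=0$, so
\begin{equation*}
0 \;=\; \frac{d}{dt}\, f(\gamma(t))\Big|_{t=0} \;=\; \nabla f(x^*)\cdot\gamma'(0).
\end{equation*}
Hence $\nabla f(x^*)$ is orthogonal to the tangent space $T_{x^*}M_h$.

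Next I would identify $T_{x^*}M_h$ with $\ker J_{x^*}(h)$. The easy inclusion is that any curve $\gamma\subset M_h$ satisfies $h\circ\gamma\equiv 0$, whence differentiating at $0$ gives $J_{x^*}(h)\,\gamma'(0)=0$. The reverse inclusion, which gives enough curves to cover all of $\ker J_{x^*}(h)$, is produced by the implicit function theorem applied to $h$ at $x^*$ under the regularity assumption that $J_{x^*}(h)$ has full row rank: for each $v\in\ker J_{x^*}(h)$ one obtains a curve $\gamma\subset M_h$ with $\gamma'(0)=v$. Combining the two steps, $\nabla f(x^*)\perp \ker J_{x^*}(h)$, so by the standard orthogonal decomposition $\nabla f(x^*)\in (\ker J_{x^*}(h))^{\perp}=\mathrm{row\,span}\, J_{x^*}(h)$, and we may write
\begin{equation*}
\nabla_{x^*} f \;=\; \beta^{*\top}\, J_{x^*}(h) \;=\; \beta^{*\top}\,\nabla_{x^*} h
\end{equation*}
for some $\beta^*\in\mathbb{R}^k$, which is exactly $\nabla_{x^*}\mathcal{L}(f)(x,\beta^*)=0$.

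The main obstacle is the constraint qualification: without the assumption that $J_{x^*}(h)$ has full row rank at $x^*$ (LICQ), the tangent cone can be strictly smaller than $\ker J_{x^*}(h)$ and a Lagrange multiplier need not exist. The proof therefore must either include this regularity hypothesis explicitly or, as seems to be the implicit convention in the paper's later sections on step functions and regular points of the constrained manifold, restrict attention to the regular part of $M_h$ where LICQ is automatic. All remaining steps (the chain rule, orthogonal decomposition in $\mathbb{R}^n$, and the implicit function theorem) are routine and can be cited directly from \cite{jorge2006numerical}.
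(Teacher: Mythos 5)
Your proof is correct and is the standard textbook argument; the paper does not reproduce a proof of this theorem but simply cites it from Nocedal and Wright \cite{jorge2006numerical}, so there is no paper proof to compare against, and your argument matches the one in that reference. Your observation about the constraint qualification is apt and worth emphasizing: as stated, the theorem omits the LICQ (or some other constraint-qualification) hypothesis, and without it the tangent cone at $x^*$ can be strictly smaller than $\ker J_{x^*}(h)$, so the orthogonal-complement step fails and multipliers need not exist (e.g.\ $h(x,y)=(x^2+y^2,\,(x-1)^2+y^2-1)$ at the origin). You are also right that this is harmless in the paper's actual use of the result, since the later sections (Section~\ref{sec:smoothness} onward) explicitly restrict to regular values of the marginal map where the Jacobian has full rank, making LICQ automatic at the critical points under consideration.
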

The sufficient condition for the optimality of $x^*$ comes from second-order conditions; hence, we need feasible directions.
\begin{definition}[The set of linearized feasible directions]
The set of linearized feasible directions on $M_gh$ at $x \in M_h $ corresponds to vectors orthogonal to the gradient of the constraints of $M_h$, i.e.
\begin{equation*}
    C_{x} M_h = \{ v | v^\top J_x h = 0\}.
\end{equation*}  
It is clear when the columns  $J_x h$ of the constraints of $M_h$ are linearly independent, the tangent space $T_x M_h$ is defined. 

\end{definition}
First, we have a necessary second-order condition for the optimality. Let $H_x^g f$ be the geodesic hessian of $f$ on $M_h$ defined by $H_x^g f = H_x \mathcal{L}(f)$.
\begin{theorem}[Second-order necessary conditions]
\label{thm:necessarysecondordercondition}
Let $x^*$ be a critical point of $f:M_h \to \mathbb{R}$. Then if $x^*$ is a local minimum  $f$ then $v^\top \left( H_x^g f \right) v \geq 0$ for all $v \in C_{x^*} M_h$.
\end{theorem}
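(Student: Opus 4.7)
The plan is to reduce the second-order information at $x^*$ to a one-dimensional statement by testing $f$ along a smooth feasible curve through $x^*$. Concretely, for any admissible direction $v \in C_{x^*} M_h$, I will produce a smooth curve $\gamma$ lying in $M_h$ with $\gamma(0) = x^*$ and $\gamma'(0) = v$; then the fact that $t=0$ is a local minimum of the scalar function $t \mapsto f(\gamma(t))$ forces its second derivative at $0$ to be nonnegative, and a chain-rule computation identifies that second derivative with $v^\top (H_{x^*}^g f) v$.

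For the construction of $\gamma$, I would use that under the linear-independence of the columns of $J_{x^*} h$ (which is the constraint qualification already built into the definition of $C_{x^*} M_h$ as a tangent space), $M_h$ is locally a smooth embedded submanifold near $x^*$, and its tangent space coincides with $C_{x^*} M_h$. A clean realization is via the exponential map of Section \ref{sec:mathematicalbacground}: endow $M_h$ with any Riemannian metric and set $\gamma(t) = \mathrm{Exp}_{x^*}(tv)$. Alternatively, an implicit-function-theorem argument produces $\gamma$ directly. Either way, $h(\gamma(t)) = 0$ for all small $t$.

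With $\gamma$ in hand, define $\phi(t) = f(\gamma(t))$. Since $\gamma$ stays inside $M_h$ and $x^*$ is a local minimum of $f|_{M_h}$, the function $\phi$ has a local minimum at $t=0$, hence $\phi''(0) \geq 0$. The chain and product rules give
\[
\phi''(0) = v^\top (H_{x^*} f)\, v + \nabla_{x^*} f \cdot \gamma''(0).
\]
The first-order necessary condition provides a multiplier $\beta^* \in \mathbb{R}^k$ with $\nabla_{x^*} f = \beta^{*\top} J_{x^*} h$, and differentiating $h_i(\gamma(t)) = 0$ twice at $t=0$ yields $J_{x^*} h_i \cdot \gamma''(0) = -v^\top (H_{x^*} h_i)\, v$ for each $i$. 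Plugging these two relations into the expression above collapses the cross term and leaves
\[
0 \;\leq\; \phi''(0) \;=\; v^\top \Bigl( H_{x^*} f - \sum_i \beta_i^*\, H_{x^*} h_i \Bigr) v \;=\; v^\top (H_{x^*}^g f)\, v,
\]
which is exactly the claim.

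The main obstacle is the curve-construction step: one needs to know that every linearized feasible direction is actually realised as the velocity of a genuine smooth curve inside $M_h$, not merely as an asymptotic difference quotient. Under the implicit constraint qualification that $J_{x^*} h$ has full row rank (which is what makes $C_{x^*} M_h$ coincide with the tangent space $T_{x^*} M_h$ rather than being a proper tangent cone), this is standard. Once $\gamma$ exists, the remainder of the proof is the Lagrangian/second-derivative calculation the paper has essentially already laid out in its preceding discussion of the geodesic Hessian.
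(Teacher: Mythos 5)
Your proof is correct and follows the standard textbook argument; the paper itself states this theorem citing Nocedal \& Wright without supplying its own proof, so there is nothing here to diverge from. You also correctly flag the one delicate step—that a linearized feasible direction must be realized as the velocity of an actual smooth curve in $M_h$, which requires the full-rank constraint qualification that the paper builds implicitly into its definition of $C_{x^*} M_h$—so the argument is sound.
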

\noindent and a sufficient condition,
\begin{theorem}[Second-order sufficient conditions]
\label{thm:sufficientsecondordercondition}
Let $x^*$ be a critical point of $f:M_h \to \mathbb{R}$. Then if $v^\top H_{x^*}^g f v > 0$  for all  $v \in C_{x^*} M_h \setminus \{0\}$, then $x^*$ is a strict local minimum of $f$ on $M_h$.
\end{theorem}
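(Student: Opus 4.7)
The plan is to argue by contradiction, parametrizing nearby feasible points by tangent directions using the Riemannian exponential map on $M_h$ from Section~\ref{sec:mathematicalbacground}. Assume $x^*$ is not a strict local minimum: there exists a sequence $(x_k) \subset M_h$ with $x_k \to x^*$ and $f(x_k) \le f(x^*)$. Since the gradients $\nabla h_1,\ldots,\nabla h_k$ are (implicitly) linearly independent at $x^*$---so that $C_{x^*}M_h$ coincides with the tangent space $T_{x^*}M_h$---the set $M_h$ is a smooth submanifold near $x^*$ and the exponential map $\mathrm{Exp}_{x^*}$ of~(\ref{eq:exponentialmap}) is a local diffeomorphism from a neighborhood of $0 \in T_{x^*}M_h$. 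For $k$ large I can therefore write $x_k = \mathrm{Exp}_{x^*}(t_k v_k)$ with $t_k \to 0^+$ and unit tangent vectors $v_k \in T_{x^*}M_h$.

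Next I would exploit that on $M_h$ the Lagrangian agrees with $f$: $\mathcal{L}(f)(x,\beta^*) = f(x)$ for every $x \in M_h$, so it suffices to expand $\mathcal{L}(f)(\cdot,\beta^*)$. Let $\gamma_k(t) = \mathrm{Exp}_{x^*}(t v_k)$ be the geodesic with $\gamma_k(0)=x^*$ and $\gamma_k'(0)=v_k$. Taylor expanding $t \mapsto \mathcal{L}(f)(\gamma_k(t),\beta^*)$ at $t=0$, the first derivative vanishes by the KKT stationarity condition $\nabla_{x^*}\mathcal{L}(f)(x,\beta^*)=0$, while the second derivative equals $v_k^\top H^g_{x^*} f\, v_k$ by the geodesic-Hessian identity underlying Theorem~\ref{thm:necessarysecondordercondition}. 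Thus
\[
f(x_k) = f(x^*) + \tfrac{1}{2} t_k^2\, v_k^\top H^g_{x^*} f\, v_k + R_k, \qquad R_k = o(t_k^2),
\]
provided the remainder $R_k$ can be controlled uniformly in the direction $v_k$.

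Then I would pass to a subsequence along which $v_k \to v^* \in C_{x^*} M_h$ with $\|v^*\|=1$, using compactness of the unit sphere in $T_{x^*}M_h$. From $f(x_k) \le f(x^*)$, dividing by $t_k^2/2$ and letting $k \to \infty$ yields $v^{*\top} H^g_{x^*} f\, v^* \le 0$, contradicting the assumption that $v^\top H^g_{x^*} f\, v > 0$ for every nonzero $v \in C_{x^*} M_h$.

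The main obstacle is precisely the uniformity of the Taylor remainder: a pointwise expansion only gives $R_k = o(t_k^2)$ with a constant that may depend on $v_k$, so one must control $R_k$ uniformly as the direction varies. I would handle this by invoking Taylor's theorem with integral remainder for $\mathcal{L}(f)(\cdot,\beta^*)$ and bounding the remainder by the oscillation of its Hessian over a small neighborhood of $x^*$, which is arbitrarily small by continuity. An equivalent route, perhaps cleaner, is to first upgrade the hypothesis via compactness to a uniform quadratic lower bound $v^\top H^g_{x^*} f\, v \ge \alpha\|v\|^2$ on $C_{x^*}M_h$ with some $\alpha>0$, and then use a standard perturbation estimate on $H\mathcal{L}(f)$ near $x^*$ to absorb the remainder and conclude $f(x) > f(x^*)$ for all $x \in M_h$ sufficiently close to $x^*$ with $x\neq x^*$.
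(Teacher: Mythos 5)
Your proof is correct in substance and follows the same contradiction-plus-Taylor template as the argument in Nocedal and Wright, which is the source the paper cites for this theorem without reproducing a proof. The genuine difference is your choice of parametrization. The textbook proof takes the sequence $(x_k)$ in the ambient $\mathbb{R}^n$, writes $x_k = x^* + \delta_k d_k$ with $\|d_k\|=1$ and $\delta_k\to 0$, passes to a subsequence $d_k\to d$, and checks $d\in C_{x^*}M_h$ directly from $h(x_k)=h(x^*)=0$ via a first-order expansion of $h$; that route never needs $M_h$ to be a smooth submanifold, never needs the exponential map, and the remainder is handled automatically because one Taylor-expands the fixed scalar function $t\mapsto\mathcal{L}(f)(x^*+t d_k,\beta^*)$ with Lagrange remainder, so the mean-value point converges to $x^*$ and continuity of $H\mathcal{L}(f)$ finishes the limit. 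Your route instead pulls the sequence back through $\mathrm{Exp}_{x^*}$, which buys geometric cleanliness (you stay intrinsically on $M_h$ and get $v_k\in T_{x^*}M_h$ for free, rather than having to argue the limiting direction is linearized-feasible) but costs an extra regularity hypothesis: you need LICQ so that $M_h$ is a smooth manifold near $x^*$, $C_{x^*}M_h=T_{x^*}M_h$, and $\mathrm{Exp}_{x^*}$ is a local diffeomorphism. The paper does tacitly work under LICQ---its definition of the linearized feasible directions already notes that $T_xM_h$ is defined only when the columns of $J_xh$ are linearly independent---so this is not a gap, but the linear parametrization is more elementary and avoids the manifold machinery entirely. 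Finally, the ``main obstacle'' you flag is not really one: with either parametrization, the Lagrange (or integral) form of the second-order Taylor remainder for $\mathcal{L}(f)(\cdot,\beta^*)$, plus continuity of its Hessian, gives the needed uniformity without any separate compactness upgrade of the hypothesis to a quadratic lower bound, though that alternative also works.
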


\subsection{Preservation of critical Points by Diffeomorphisms}

Here, we review that diffeomorphisms preserve critical points, the positive/negative definiteness, and the full rankness of Hessian matrices at any critical points of $f$ on $M$. 

Let $M$ be a smooth manifold with dimension $m$.  Since $M$ and $\mathbb{R}^m$ are locally diffeomorphic. Let $f:M \to \mathbb{R}$ be a smooth function, let $u^* \in \mathbb{R}^m$ be a critical point of $f$ and let $\phi:\mathbb{R}^m \to M$ be a local diffeomorphism at $u$ and $x^* = \phi(u^*)$.  By taking the first and second derivatives of $f \circ \phi$, we have 

\begin{eqnarray*}
\nabla_u f(x(u))  = \nabla_x f(x(u))J_u(x(u)), 
\end{eqnarray*}
and 
\begin{equation*}
H_u f(x(u)) = J_u(x(u))^\top H_x f(x(u))J_u(x(u)) + \nabla_x f(x(u))H_u x(u),
\end{equation*}
where $J_u(x(u))$ is a Jacobian matrix of $\phi$. Since $\phi:\mathbb{R}^m \to M$ is a diffeomorphism, $J_u(x(u))$ is invertible and if $u^*$ is a critical point then $\nabla_{u^*} f(x(u))=0$. Thus $x^*=\phi(u^*)$ is a critical point since $\nabla_x f(x(u^*))=0$. Moreover if $u^*$ is critical point  then  $H_{u^*} f \circ \phi$ is positive/negative (semi)definite iff  $H_{x(u^*)} f $ is positive/negative (semi)definite. Hence, we have the following lemma, 

\begin{lemma} 
\label{lem:diffeomorphismpreservation}
Let  $M$ be a smooth manifold with dimension $m$  and let $f:M \to \mathbb{R}$ be a smooth function. Let $\phi: \mathbb{R}^m \to M$ be a diffeomorphism and let $H_x f$ be the Hessian matrix of $f$ at $x \in M$. Let  $u^*$ be a critical point of $f$ and $u^*=\phi(x^*)$ then  $H_{u^*} f \circ \phi$ is positive/negative (semi)definite iff $H_{x^*} f$ is positive/negative (semi)definite.
\end{lemma}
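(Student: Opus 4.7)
The plan is to assemble the computation sketched in the two displayed equations immediately preceding the lemma into a clean invariance argument via Sylvester's law of inertia. The statement is essentially a change-of-variables identity for the Hessian at a critical point, combined with the observation that $J_{u^*}\phi$ is invertible, so I do not expect a genuine obstacle; the work is to phrase it cleanly and to verify that the argument handles the semidefinite cases correctly.

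First I would pass to a local chart on $M$ around $x^*$ so that $\phi$ is represented as a smooth map between open subsets of $\mathbb{R}^m$. Writing $g := f \circ \phi$ and $x(u) := \phi(u)$, the chain rule gives $\nabla_u g(u) = \nabla_x f(x(u))\, J_u \phi$, and differentiating once more yields
\begin{equation*}
H_u g(u) \;=\; J_u\phi^{\top}\, H_{x(u)} f \, J_u\phi \;+\; \sum_{i=1}^{m} \frac{\partial f}{\partial x_i}(x(u)) \, H_u \phi_i .
\end{equation*}
Since $J_{u^*}\phi$ is invertible, $u^*$ is critical for $g$ iff $x^* = \phi(u^*)$ is critical for $f$, and at such a point the sum on the right vanishes, leaving
\begin{equation*}
H_{u^*} g \;=\; A^{\top}\, H_{x^*} f \, A, \qquad A := J_{u^*}\phi ,
\end{equation*}
which exhibits $H_{u^*}(f\circ \phi)$ as a congruence transform of $H_{x^*} f$ by the invertible matrix $A$.

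The (semi)definiteness conclusion then follows by setting $w := A v$: as $v$ ranges over $\mathbb{R}^m$ (respectively $\mathbb{R}^m \setminus \{0\}$), so does $w$, and the identity $v^{\top}(A^{\top} H_{x^*} f\, A) v = w^{\top} H_{x^*} f\, w$ transfers the sign of the quadratic form in both directions. Hence $H_{u^*} g$ is positive definite, positive semidefinite, negative definite, or negative semidefinite precisely when $H_{x^*} f$ is. The only mild points worth stating carefully are that the Hessian at a critical point is well defined independently of the chart (a standard fact, which is exactly why the chart choice in the first step was harmless), and that the semidefinite case requires surjectivity of $v \mapsto Av$, which is automatic from invertibility of $A$.
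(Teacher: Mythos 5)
Your argument is exactly the paper's: chain rule twice, the first-derivative term vanishes at the critical point, leaving the congruence $H_{u^*}(f\circ\phi)=A^\top H_{x^*}f\,A$ with $A=J_{u^*}\phi$ invertible, from which (semi)definiteness transfers. You just spell out the Sylvester/change-of-variables step slightly more explicitly than the paper does.
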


\section{Hypergraphons}
\label{sec:hypergraphons}


A multi-relational hyper-graph $G=(V, E_1, \cdots, E_r)$ is a tuple of sets where $V$ is the set of vertices, and $(V,E_i)$ is a uniform hyper-graph, and $E_i$ is a relation on $V$ with arity $d_i$. A hyper-graph is uniform when all edges have the same arity. A uniform hyper-graph is linear when each pair of vertices has at most one hyperedge connecting them. 

Let $r$ be a positive integer and $d$ be a vector of positive integers of $r$ dimensions. Then, a $(r,d)$-hyper-graph is the tuple  $(V, E_1, \cdots, E_r)$ where $V$ is the set of vertices and each  $E_i$ is a set of uniform hyper-edges with arity $d_i$.  $(r,d)$-hyper-graphs are intended to be possible worlds (semantic models) of first-order languages with a signature of $r$ symmetric relational symbols where each relational symbol $E_i$ has an arity of $d_i$.

A multi-relational hypergraph with $r$ relations is the tuple $(V, E_1, \cdots, E_r)$ where $V$ is the set of vertices and each  $E_i$ is a set of $d_i$-hyper-edges. We denote 
Let $W: \prod_{k=1}^r( [0,1]^{d_k} \mapsto \mathbb{R})$ be the cartesian product of $r$ functions $[0,1]^{d_k} \mapsto \mathbb{R}$. Hence each coordinate is $W_k:[0,1]^{d_k} \to \mathbb{R}$.

The $(r,d)$-hyper-graphs intend to model the limit of a growing sequence of hyper-graphs $(r,d)$. The space of $(r,d)$-hyper-graphons is denoted by $\graphonspacel$ and defined by 
\begin{eqnarray*}
\graphonspacel = \{ W: \prod_{k=1}^r( [0,1]^{d_k} \mapsto [0,1]) \mid W_k(x_1, \cdots, x_{d_k}) \\ =W_k(x_{\psi(1)}, \cdots, x_{\psi(d_k)}) \mbox{ for } \psi \in \Sigma_d, k \in [r] \}.
\end{eqnarray*}
 The elements of $\graphonspacell{r}{d}$ can be considered tuples of non-parametric stochastic block models of random hypergraphs.  

Now, it is convenient to define the space of $d$-hypergraphons with $r$ relations on reals is 
\begin{eqnarray*}
\realgraphonspacel = \{ W: \prod_{k=1}^r( [0,1]^{d_k} \mapsto \mathbb{R}) \mid W_k(x_1, \cdots, x_{d_k}) \\ =W_k(x_{\psi(1)}, \cdots, x_{\psi(d_k)}) \mbox{ for } \psi \in \Sigma_d , k \in [r]\} 
\end{eqnarray*}
In general, if    $A \subseteq \mathbb{R}$ then
\begin{eqnarray*}
    \graphonspacel_{A}= \{  W: \prod_{k=1}^r( [0,1]^{d_k} \mapsto A) \mid W_k(x_1, \cdots, x_{d_k}) \\ =W_k(x_{\psi(1)}, \cdots, x_{\psi(d_k)}) \mbox{ for } \psi \in \Sigma_d , k \in [r]\}     
\end{eqnarray*}
We call elements of $\graphonspacel_{(0,1)}$  and  $\graphonspacel_{\mathbb{R}}$   respectively purely random graphons and  real-valued graphons and when $A$ is omitted then $A=[0,1]$.

Let $\sim$ be the equivalence relation on $\realgraphonspacel $ defined by $W \sim V$ iff it exists  $\sigma \in \Sigma$ such that for all $k \in [r]$ and $(x_1, \cdots,x_{d_k}) \in \mathbb{R}^{d_k}$  we have   $V_k^\sigma(x_1, \cdots,x_{d_k}) = V(\sigma(x_1), \cdots, \sigma(x_d{d_k})$ which is abbreviated by $W=V^\sigma$.
 The quotient spaces of $\graphonspacel$ and $\realgraphonspacel$ by $\sim$ are the spaces unlabeled d-hypergraphons with $r$ relations i.e. $\graphonspaceu=\graphonspacel/\sim$ and $\realgraphonspaceu=\realgraphonspacel/\sim$.

We endow  $\realgraphonspacel$ a topology by the cut-norm 
\begin{equation*}
    \| W \|_\Box = \sum_{k=1}^r \sup_{S_1, \cdots S_{d_k} \subset [0,1]} \left|  \int_{S_1 \times \cdots S_{d_k} } W_k(x_1, \cdots x_{d_k}) dx_1 \cdots dx_{d_k} \right|
\end{equation*}
and  for $\realgraphonspaceu$ the cut-distance 
\begin{equation*}
    \delta_\Box(W,V) = \inf_{\sigma \in \Sigma} \| W-V^\sigma \|_\Box 
\end{equation*}

\subsection{Subgraph density}

Let $F$ be a $(r,d)$-hypergraph and let $W$ a $(r,d)$-hypergraphon the subgraph density of $F$ in $W$ is computed by the formula 
\begin{equation*}
\label{eq:subgraphdensity}
    t(F,W) =  \int_{[0,1]^{|V(F)|}} \prod_{k=1}^r \prod_{(i_1, \cdots,i_{d_k}) \in E_k(F)} W(x_{i_1},\cdots,x_{i_{d_k}}) \prod_{i \in V(F)} dx_i 
\end{equation*}

\subsection{ Quantum graphs }

\begin{definition}[Quantum graphs]
 A quantum graph $ F$ is the   linear combination of a finite number of linear $(r,d)$-hypergraphs $F_i$ with real coefficients, more precisely  
\begin{equation*}
    F = \sum_i \alpha_i F_i \quad  \mbox{ and } \alpha_i \in \mathbb{R}.
\end{equation*}
The $(r,d)$-hypergraphs $F_i$ are the constituents of the quantum graph.
Hence  the definition of $t(F,W)$ extends to quantum graphs linearly, i.e. $t(F,W) = \sum_i \alpha_i t(F_i,W)$.
\end{definition}

\section{Step functions in $\realgraphonspacel$}
\label{sec:stepfunctionspace}

A  $(r,d)$-step function in $\realgraphonspacel$  is a function which is  constant in each box $(\frac{i_1-1}{m}, \frac{i_1}{m} ] \times \cdots \times (\frac{i_d-1}{m}, \frac{i_{d_k}}{m}]$ for each coordinate $k \in [r]$. A step function has a natural parameterization as follows: 

 Let $\lebesgue$ be the Lebesgue measure on $\mathbb{R}$.   Let $S=\{S_1, \cdots, S_m\}$ be a partition of $[0,1]$ where every $S_i$ is an interval,  let $\pi \in \convexcombm{m}$ such that $\nu(S_i)=\pi_i$. 
Let  $\mathbb{R}_S^{(m,r,d}$ be the set of $r$ dimensional vectors where each coordinate is a symmetric $d$-dimensional array with $m^d$ steps. 
Let $\convexcombm{m}_q$ be  the  $k$-simplex of probability vectors of length $m$ 
\begin{equation*}
\convexcombm{m} = \{ \pi \in [0,1]^m \mid \sum_i \pi_i = 1 \}.    
\end{equation*}
We call the elements $\pi \in \convexcombm{m}$ partition vectors of the step functions.
Let $\sfmat \in \mathbb{R}_S^{(m,r,d}$ and $\pi \in \convexcombm{m}$. Then a $m$-step function $(\sfmat,\pi):\prod_{k=1}^r ([0,1]^{d_k} \mapsto \mathbb{R}^r)$ is a function that locally constant on  $S_{i_1} \times \cdots \times S_{i_d}$ for all $i_1, \cdots, i_d \in [m]^d$ and in each coordinate $k \in [r]$, more precisely
\begin{equation*}
(\sfmat,\pi)_k =\sum_{i_1, \cdots, i_{d_k}=1}^m \sfmat_{i_1, \cdots, i_d, k} \mathbbm{1}_{S_{i_1}\times \cdots \times  S_{i_{d_k}}}.
\end{equation*}
We denote the space of  $m$-step functions by  $\realstepfunctionspacel{m}$. Thus $\realstepfunctionspacel{m}$  is identified by $\mathbb{R}_S^{(m,r,d)} \times P^{(m)}$.  It is convenient to define the class of step functions when $\pi$ is fixed. Hence $\realstepfunctionspacelpi{m}{\pi}=  \mathbb{R}_S^{(m,r,d} \times \{ \pi \}$. 
Let 
\begin{equation*}
n(m,r,d) = \sum_{k=1}^r \binom{m+d_k-1}{d_k}     
\end{equation*}
then the number of parameters to define a $m$-step function in $\realstepfunctionspacel{m}$ and $\realstepfunctionspacelpi{m}{\pi}$ are  $n(m,r,d)+m-1$ and  $n(m,r,d)$. In $\stepfunctionspacel{m}$, $t(F,W)$ is reduced to 
\begin{equation*}
t(F,(\sfmat,\pi)) = \sum_{\tiny x_1, \cdots, x_{|V(F)|}=1}^m \prod_{k=1}^r  \prod_{(i_1, \cdots, i_{d_k}) \in E_k(F)} \sfmat_{i_1,\cdots,i_{d_k},k } \prod_{i \in V(F)} \pi_i
\end{equation*}
Let $E(F) = \cup_k E_k(F)$. It sometimes convenient to write $t(F, (\sfmat,\pi))$ as follows,
\begin{equation*}
t(F,(\sfmat,\pi)) = \sum_{\tiny x_1, \cdots, x_{|V(F)|}=1}^m   \prod_{(i_1, \cdots, i_{d_k}) \in E(F)} \sfmat_{i_1,\cdots,i_d,k } \prod_{i \in V(F)} \pi_i
\end{equation*}

\subsection{Partial Derivatives of $t(F, (\sfmat,\pi))$}
 
In this section, we develop formulas for the partial derivative of the polynomials $t(F,(\sfmat,\pi))$. First, we define the notion of the labeled hypergraph and partial subgraph density. 
 \subsubsection{Labeled Graphs and Partial Subgraph Densities}
\begin{definition}[Labeled Graphs]
A k-labeled hypergraph is a $(r,d)-$hyper-graph $F$ to which we associate a vector of its vertices $(a_1\cdots a_k)\in (V(F))^k$ (for some $k\in\mathbb{N}$).  We denote this $k$-labeled graph by $F^{\bullet a_1 \cdots a_k}$. For a $k$-labeled graph, its $i$-th label is $a_i$ for $i\in[k]$. 
\end{definition}
We define the partial subgraph density of a $k$-labeled graph $F^{\bullet a_1\cdots a_k}$ as a $k$-dimensional function $t_{x_1, \cdots, x_k}(F^{\bullet  a_1 \cdots a_k},W):[0,1]^k \to [0,1]$ where
\begin{equation}
\label{eq:condsubgraphdensity}
t_{ x_1, \cdots,x_k}(F^{\bullet a_1 \cdots a_k},(\sfmat,\pi)) = \int_{[0,1]^{|F|-k}} \prod_{j=1}^r  \prod_{(i_1, \cdots, i_{d_j}) \in E_j(F)} \sfmat_{i_1,\cdots,i_{d_j},j } \prod_{i \in V \setminus [k]} dx_i.
\end{equation}
By definition, if $k=0$ then $t_{ x_1, \cdots,x_k}(F^{\bullet a_1 \cdots a_k},(\sfmat,\pi))$ is $t(F,(\sfmat,\pi))$. When considering step functions $(\sfmat,\pi)$, we will sometimes use $t_{\pi:i_1\cdots i_k}(F^{\bullet a_1\cdot a_k} , (\sfmat,\pi))$ as a shorthand for  $t_{x_1\cdots x_k}(F^{\bullet a_1\cdot a_k} , (\sfmat,\pi))$ where for $i\in[k]$ the value $x_i$ is in the $i$-th partition class, e.g., $x_i = \pi_i/2 + \sum_{j=1}^{i-1} \pi_j$.

Moreover, a notation for the linear combination of partial subgraph densities is also necessary. 
\begin{definition}[Linear combination of partial subgraph densities.]
  \label{def:partialbullets}
Let $F$ be a $(r,d)$-hypergraph and $k \in [r]$. We define a notation for a linear combination of partial subgraph densities  
\begin{equation*}
\partial_k^{(\bullet \cdots \bullet)} F =  \sum_{(a_1 \cdots a_{d_k}) \in E_k(F)} F^{\bullet(a_1 \cdots a_{d_k})}  
\end{equation*}
and
\begin{equation*}
    \partial^{ \bullet} F = \sum_{a \in V(F)} F^{\bullet a} .
\end{equation*}
where $F^{\bullet(a_1 \cdots a_{d_k})}$ denotes the $d_k$-labeled hypergraph obtained from $F$  by deleting the hyperedge $(a_1 \cdots a_{d_k})$ and  labeling the vertices $\{a_1, \cdots, a_{d_k}\}$ and $F^{\bullet a}$ denotes the $1$-labeled graph obtained from $F$  by labeling the node $a$, The partial subgraph densities of these labeled hypergraphs are
\begin{eqnarray}
\label{eq:labeledsubgraphdensity}
    && t_{i_1 \cdots i_{d_k} } (F^{\bullet(a_1 \cdots a_{d_k})},(\sfmat,\pi)) = \sum_{x_1, \cdots, x_{|F|-d_k}} \prod_{(s_1 \cdots s_{d_j})  \in E(F) \setminus \{a_1 \cdots q_{d_k}\}} \sfmat_{x_{s_1}, \cdots x_{d_j},j} \nonumber \\ && \prod_{k \in V(F) \setminus \{a_1, \cdots, a_{d_k}\}} \pi_{x_k}    
\end{eqnarray}
and
\begin{equation*}
    t_i(F^{\bullet a},(\sfmat,\pi)) = \sum_{x_1, \cdots, x_{|F|-1}} \prod_{(s_1 \cdots s_{d_k})  \in E_k(F) } \sfmat_{x_{s_1}, \cdots x_{d_k}} \prod_{k \in V(F) \setminus \{a\}} \pi_{x_k}    
\end{equation*}
\end{definition}

Note that $F^{\bullet(ab)}$ and $  F^{\bullet(ba)} $ are not always equivalent in the computation of the partial subgraph density.
Consider  $F^{\bullet(ab)}$ and $F^{\bullet(ba)}$ as shown in Figure  \ref{fig:labeledgraph}, then
\begin{equation*}
t_{uv} (F^{\bullet(ab)},W) = \int_{[0,1]} W(x,v)dx \quad \mbox{ and } \quad  t_{uv} (F^{\bullet(ba)},W) = \int_{[0,1]} W(u,y)dy.
\end{equation*}
\begin{figure}
    \centering
    \includegraphics[scale=0.8]{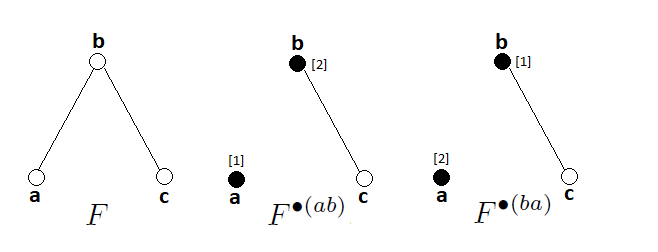}
    \caption{$F^{\bullet(ab)}$ and $F^{\bullet(ba)}$ are not equivalent. }
    \label{fig:labeledgraph}
\end{figure}

\subsubsection{Partial Derivatives of $t(F,)$ in $\realstepfunctionspacel{m}$}

We identify uniquely the partial derivative operator $\frac{\partial }{\partial \sfmat_{i_1 \cdots i_{d_k},k}}$ by the non-increasing sequence $i_1 \leq \cdots \leq i_{d_k}$.   $\frac{\partial {\sfmat}_{x_1 \cdots x_{d_j},j}}{\partial {\sfmat}_{i_1 \cdots i_{d_k},k}}$ is non-zero when $j=k$ and $\{x_1 \cdots x_{d_j}\} = \{ i_1 \cdots i_{d_k} \}$ thus 
\begin{equation*}
\label{eq:simplepartialderivative}
\frac{\partial {\sfmat}_{x_1 \cdots x_{d_j},j}}{\partial {\sfmat}_{i_1 \cdots i_{d_k},k}} = \delta_{j,k} \delta_{ \{x_1 \cdots x_{d_j}\} , \{ i_1 \cdots i_{d_k} \} } 
\end{equation*}
where $\delta_{i,j}$ is Kronecker's delta, and we overload  Kronecker's delta for finite sets, i.e. 
\begin{equation*}
    \delta_{A,B} = \begin{cases}
        1 & A = B \\
        0 & A \neq B
    \end{cases}
\end{equation*}
where $A$ and $B$ are finite sets.

\begin{restatable}{theorem}{thmpartialderivatives}
\label{thm:partialderivatives}
Let $(\sfmat,\pi) \in \realstepfunctionspacel{m}$ and let $orb((i_1, i_2, \cdots i_{d_k}))$ by the orbit of $\sfmat_{i_1, \cdots, i_{d_k}}$ the actions of $\Sigma_{d_k}$. 
Then the first partial derivative of $t(F,(\sfmat,\pi))$ are
\begin{eqnarray}
\label{eq:1partialderstep}
\frac{\partial t(F)}{\partial \sfmat_{i_1, \cdots, i_{d_k},k} } &=& \pi_{i_1} \cdots \pi_{i_{d_k}} \sum_{(j_1, \cdots,j_{d_k}) \in orb((i_1 \cdots i_{d_k})) } t_{j_1 \cdots j_{d_k}} (\partial_k^{ \bullet (\bullet \cdots \bullet)}F, (\sfmat,\pi) )
\end{eqnarray}
and
\begin{equation}
\label{eq:1partialderstep2}
\frac{\partial t(F, (A,\pi) )}{\partial \pi_i} = t_{i}(\partial^{\bullet} F,(A,\pi)) .
\end{equation}  

\end{restatable}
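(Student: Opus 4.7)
The plan is to differentiate the explicit polynomial expression
\[
t(F,(\sfmat,\pi)) = \sum_{x_1,\dots,x_{|V(F)|}=1}^m \prod_{k=1}^r \prod_{(a_1,\dots,a_{d_k})\in E_k(F)} \sfmat_{x_{a_1},\dots,x_{a_{d_k}},k} \prod_{v\in V(F)} \pi_{x_v}
\]
term-by-term using the product rule, then reassemble the resulting sum into the notation of labeled subgraph densities introduced in Definition \ref{def:partialbullets}.

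For formula (\ref{eq:1partialderstep}), I would apply $\partial/\partial \sfmat_{i_1\cdots i_{d_k},k}$ under the outer sum over $(x_1,\dots,x_{|V(F)|})$ and use the product rule on the double product. Each factor $\sfmat_{x_{a_1},\dots,x_{a_{d_k}},k}$ differentiated via the single-parameter rule $\partial \sfmat_{x_1\cdots x_{d_j},j}/\partial \sfmat_{i_1\cdots i_{d_k},k}=\delta_{j,k}\,\delta_{\{x_1\cdots x_{d_j}\},\{i_1\cdots i_{d_k}\}}$ gives a non-zero contribution only when the unordered multiset $\{x_{a_1},\dots,x_{a_{d_k}}\}$ equals $\{i_1,\dots,i_{d_k}\}$; equivalently, $(x_{a_1},\dots,x_{a_{d_k}})$ runs through $\operatorname{orb}((i_1,\dots,i_{d_k}))$ under the $\Sigma_{d_k}$-action. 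For each ordering $(j_1,\dots,j_{d_k})$ in this orbit, restricting the outer sum to $x_{a_\ell}=j_\ell$ forces the $\pi$-factors on the labeled vertices to be exactly $\pi_{j_1}\cdots\pi_{j_{d_k}}=\pi_{i_1}\cdots\pi_{i_{d_k}}$ (since permutations of the multiset do not change the product), and the leftover sum over the unlabeled vertices, together with the product over the remaining hyperedges, is precisely $t_{j_1\cdots j_{d_k}}(F^{\bullet(a_1\cdots a_{d_k})},(\sfmat,\pi))$ as defined in (\ref{eq:labeledsubgraphdensity}). Summing the product-rule contributions over the edges $(a_1,\dots,a_{d_k})\in E_k(F)$ and invoking the linearity of $t(\cdot,W)$ in its first argument to collapse the edge sum into $\partial_k^{\bullet(\bullet\cdots\bullet)}F = \sum_{(a_1\cdots a_{d_k})\in E_k(F)} F^{\bullet(a_1\cdots a_{d_k})}$ yields the claimed identity.

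For formula (\ref{eq:1partialderstep2}), the argument is structurally simpler since the $\pi$-variables appear only linearly in each factor of $\prod_v \pi_{x_v}$. Differentiating under the sum, the product rule produces one term per vertex $v\in V(F)$, each enforcing $x_v=i$ via $\partial\pi_{x_v}/\partial\pi_i=\delta_{x_v,i}$. The remaining sum, with $x_v$ fixed to $i$, is exactly $t_i(F^{\bullet v},(\sfmat,\pi))$, and linearity in the first argument of $t$ converts $\sum_{v\in V(F)} t_i(F^{\bullet v},\cdot)$ into $t_i(\partial^\bullet F,\cdot)$.

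The one delicate point, and the step most likely to require careful bookkeeping, is the orbit sum in (\ref{eq:1partialderstep}): I must be precise that each tuple ordering on a labeled hyperedge is counted exactly once, so that $\operatorname{orb}((i_1,\dots,i_{d_k}))$ (not, say, $\Sigma_{d_k}$ itself) is the correct index set when $(i_1,\dots,i_{d_k})$ has repeated entries. This follows from the fact that the set-indexed Kronecker delta identifies all permutations of a tuple, while the outer sum ranges over ordered assignments $(x_1,\dots,x_{|V(F)|})$, so each orbit element is hit exactly once. Once this combinatorial point is settled, both formulas follow from the product rule and the definitions of the partial subgraph densities and the $\partial^\bullet$-notation.
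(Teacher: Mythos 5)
Your proposal is correct and follows essentially the same route as the paper's proof: expand $t(F,(\sfmat,\pi))$ as an explicit polynomial, apply the product rule under the outer sum, use the set-indexed Kronecker delta $\delta_{\{x_{a_1}\cdots x_{a_{d_k}}\},\{i_1\cdots i_{d_k}\}}$ to restrict to the orbit of $(i_1,\dots,i_{d_k})$, factor out $\pi_{i_1}\cdots\pi_{i_{d_k}}$, and recognize the remaining sum as the labeled subgraph density; the $\pi$-derivative is handled identically by differentiating $\prod_v \pi_{x_v}$. Your closing remark about the orbit (rather than all of $\Sigma_{d_k}$) being the correct index set when the tuple has repeated entries is exactly the bookkeeping the paper handles implicitly in its orbit-reindexing step.
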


\begin{example}
Let $F=\Triangle$ and let $(\sfmat,\pi) \in \mathcal{W}^{(m,1,2)}_{\mathbb{R}}$. Then $t(\Triangle,(\sfmat,\pi))$ is computed by the formula
\begin{equation*}
 t(\Triangle,(\sfmat,\pi) = \sum_{i,j,k=1}^m \sfmat_{ij}\sfmat_{jk}\sfmat_{ki} \pi_i \pi_j \pi_k.
\end{equation*}
Then  $F^{\bullet (a,b)} = \ltriangled $ hence $\partial \Triangle = \ltrianglea + \ltriangleb + \ltrianglec$ and $ t_{ij}(\ltrianglea, (\sfmat,\pi)) = \sum_{k=1}^m \sfmat_{ik}\sfmat_{jk} \pi_k$. Hence the partial derivative $\frac{\partial t(\Triangle, (\sfmat,\pi))}{\partial A_{ij}}$ is computed by 
\begin{eqnarray*}
\frac{\partial t(\Triangle,(\sfmat,\pi))}{\partial A_{ij}} &&= \pi_i \pi_j \left( t_{ij}(\ltrianglea + \ltriangleb + \ltrianglec, (\sfmat,\pi) ) + (1-\delta_{ij}) t_{ji}(\ltrianglea + \ltriangleb + \ltrianglec, (\sfmat,\pi) ) \right) \\  &&= 3\pi_i \pi_j \left(  t_{ij}(\ltrianglea, (\sfmat,\pi) ) + (1-\delta_{ij}) t_{ji}(\ltrianglea, (\sfmat,\pi) ) \right) 
\end{eqnarray*}
By the symmetry of triangle, we have  $t_{ij}(\ltrianglea, (\sfmat,\pi))=t_{ij}(\ltriangleb, (\sfmat,\pi))=t_{ij}(\ltrianglec, (\sfmat,\pi))$.
then  we have 
\begin{eqnarray*}
\frac{\partial t(\Triangle,(\sfmat,\pi))}{\partial A_{ij}} &&= 3(2-\delta_{ij})\pi_i \pi_j   t_{ij}(\ltrianglea, (\sfmat,\pi) ) \\&& = 3(2-\delta_{ij})\pi_i \pi_j  \sum_{k=1}^m \sfmat_{ik}\sfmat_{jk} \pi_k
\end{eqnarray*}

\end{example}

\subsection{The split map}
On the space of step functions, we define the split of step functions,
\begin{definition}[Split of step functions]
\label{def:splitmap}
Let $\lambda \in (0,1)$   and let $k \in [m]$. we denote by $\theta(\cdot,\lambda,k):\realstepfunctionspacel{m} \to \realstepfunctionspacel{m+1}$ the map defined by $\theta((\sfmat,\pi), \lambda,k)=(\theta(\sfmat,\lambda,k),\theta(\pi,\lambda,k))$ where $\theta(\sfmat,\lambda,k)\in \mathbb{R}_S^{(m+1, r,d)}$ and $\theta(\pi,\lambda,k)\in \convexcombm{m+1}$ such that
\begin{equation*}
\theta_{i}(\pi,\lambda,k) = \left\lbrace
\begin{array}{c c}
\pi_{i} & i < k \\
\lambda \pi_{k} & i = k \\
(1-\lambda) \pi_{k} & i = k+1 \\
\pi_{i-1} &  k + 1 < i \leq m+1 
\end{array} \right.
\end{equation*}
and
\begin{equation*}
\theta_{i_1,\cdots,i_{d_k}, k }(\sfmat,\lambda,m) = \left\lbrace
\begin{array}{c c}
\sfmat_{i_1, \cdots,i_{d_k},k} & \forall_{n \in [d_k]  } i_n \in [k]  \\
\sfmat_{i_1, \cdots,i_{d-s} m \cdots m,k} & \exists_{s \in [d_k]  } i_{d_k-s+1} = i_{d_k-s+2} = \cdots = i_d = m+1  
\end{array}
\right. 
\end{equation*}
and $\theta_{i_1,\cdots,i_{d_k} }(\sfmat,\lambda,k)$ is obtained from $\theta_{i_1,\cdots,i_{d_k} }(\sfmat,\lambda,m)$ by a suitable permutation $\sigma \in \Sigma_m$ i.e. $\theta_{i_1,\cdots,i_{d_k} }(\sfmat,\lambda,k) = \theta_{i_1,\cdots,i_{d_k} }(\sfmat,\lambda,m)^\sigma$.

Note that the action of $\theta(\cdot, \lambda, k)$ on $(\sfmat,\pi)$ is the  split  the $k$-th row/column of $(\sfmat,\pi)$ in two rows/columns whose   weights are $\lambda \pi_k$ and $(1-\lambda) \pi_k$.
\end{definition}
\begin{figure}
    \centering
    \includegraphics[scale=0.6]{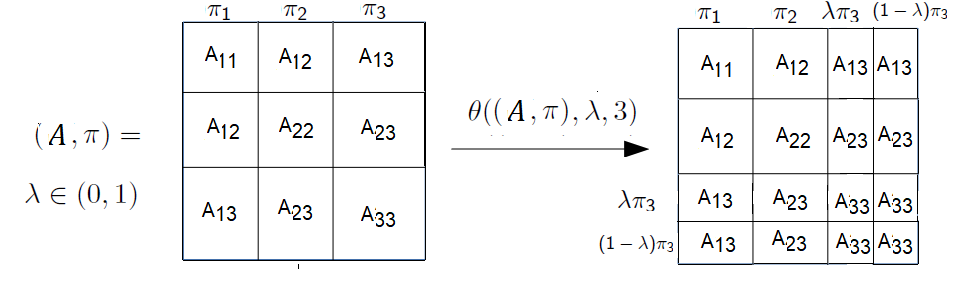}
    \caption{A split map transforms the representation of a $3$ step function into a $4$ step function}
    \label{fig:splitmap}
\end{figure}
The split map does not change $(\sfmat,\pi):[0,1]^d \to \mathbb{R}^r$ as function  only increases the dimensions to represent  $(\sfmat,\pi)$.
Figure \ref{fig:splitmap} shows how a split map transforms a bidimensional  $3$ step function into a $4$-step function, which is the same step function. 


\section{Uniform multi-relational hypergraphons}
\label{sec:uniformmulti-relationalhypergraphons}

Here we extend key results in Graphon Theory as Szemeredi's regularity lemma, Compactness, Counting Lemma, and Large Deviation principle from the space of one-relation hypergraphons, i.e., $\widetilde{\mathcal{W}}^{(1,d)}$ to   $\graphonspaceu$. To do that, we extend these key results for the case when $d$ is a vector of constant value. Hence, $\graphonspacel$ is the space of vectors of uniform multi-relational hypergraphons and $\graphonspaceu = \graphonspacel/\sim$. When $r=1$, Lubetzky and Zhao \cite{lubetzky2015replica}, extend these key results. Here, we extend the results when $r \geq 1$. 

\subsection{Key results in graphons}

\begin{theorem}[Szemeredi's regularity lemma]
\label{thm:Szemeredi1}
When $r=1$. 
For every $\epsilon >  0$ there exists some $M(\epsilon) > 0$ such that for every $W \in \graphonspacel$ there exist some $m \leq  M(\epsilon)$  and some $g \in \graphonspacel$  with $\| f -g \|_\Box \leq \epsilon $, and such that $g$ is constant in each box $(\frac{i_1-1}{m}, \frac{i_1}{m} ] \times \cdots \times (\frac{i_d-1}{m}, \frac{i_d}{m}]$.
\end{theorem}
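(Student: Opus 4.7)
The plan is to adapt the classical energy-increment proof of the weak (Frieze--Kannan) regularity lemma from the graphon setting to uniform $d$-hypergraphons, following the route of Lubetzky--Zhao, and then to refine the resulting partition into an equipartition with equal-sized intervals. For a finite partition $P = \{P_1,\ldots,P_n\}$ of $[0,1]$, let $W_P$ denote the step function obtained by averaging $W$ on each product cell $P_{i_1}\times\cdots\times P_{i_d}$, and define the energy
\[
\mathcal{E}(W,P) \;=\; \int_{[0,1]^d} W_P(x_1,\ldots,x_d)^2 \, dx_1\cdots dx_d.
\]
Since $W \in [0,1]$, we have $\mathcal{E}(W,P) \in [0,1]$. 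The first step is an energy-increment lemma: if $\|W - W_P\|_\Box > \epsilon$, then there exist witnessing sets $S_1,\ldots,S_d \subseteq [0,1]$ realizing the cut-norm discrepancy, and refining each class of $P$ along each coordinate by intersection with $S_i$ and its complement produces a partition $P'$ with at most $2^d n$ classes satisfying $\mathcal{E}(W,P') \geq \mathcal{E}(W,P) + \epsilon^2$. The key identity is $\mathcal{E}(W,P')-\mathcal{E}(W,P) = \|W_{P'}-W_P\|_2^2$, and Cauchy--Schwarz applied to the cell of $P'$ adapted to $S_1\times\cdots\times S_d$ converts the cut-norm lower bound into an $L^2$ lower bound.

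Second, I would iterate the increment step starting from the trivial one-cell partition. Since $\mathcal{E}(W,P) \leq 1$ at every stage, the iteration terminates after at most $\lceil 1/\epsilon^2\rceil$ steps and produces a partition $Q$ with $|Q| \leq M_0(\epsilon,d)$ such that $\|W - W_Q\|_\Box \leq \epsilon$. Third, I would convert $Q$ into an equipartition into intervals of length $1/m$: choose $m$ much larger than $|Q|/\epsilon$, let $Q_m$ be the equipartition, and define $g$ so that on each $m$-cell its value is inherited from $W_Q$ at a representative point of that cell. The $m$-cells that straddle a boundary of $Q$ have total Lebesgue measure at most $|Q|/m$, so summing over the $d$ coordinates bounds $\|W_Q - g\|_\Box$ by $d|Q|/m$, and the triangle inequality plus a routine $\epsilon$-rescaling yields the statement with $M(\epsilon) = \lceil d\,M_0(\epsilon/2,d)/(\epsilon/2)\rceil$.

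The main obstacle is the hypergraph energy-increment lemma for $d \geq 3$: unlike the graphon case, the cut-norm witnesses $S_1,\ldots,S_d$ need not align with any single coordinate partition, so one must decompose $W - W_P$ into a sum of pieces corresponding to refinements along each coordinate individually, and verify the energy gain by an inductive application of Cauchy--Schwarz across the $d$ coordinates. Once the increment lemma is in place, iteration and the final equitization step are comparatively routine, relying only on monotonicity of the energy under refinement, bilinearity of the cut norm, and the triangle inequality.
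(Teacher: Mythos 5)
Your energy-increment step goes through more easily than you anticipate: with the box cut norm the witness sets $S_1,\dots,S_d$ are absorbed into the common refinement $P'$ of $P$ with $\{S_i,S_i^c\}$ for all $i$, so $\mathbbm{1}_{S_1\times\cdots\times S_d}$ is $(P')^d$-measurable and a single application of Cauchy--Schwarz gives $\epsilon<\|W_{P'}-W_P\|_2$, hence $\mathcal{E}(W,P')\ge\mathcal{E}(W,P)+\epsilon^2$; no coordinate-by-coordinate induction is needed and the argument is the same for every $d$. The genuine gap is the equitization step. The partition $Q$ produced by iterating consists of \emph{arbitrary measurable sets} (intersections of witness sets and complements), not intervals, so ``$m$-cells that straddle a boundary of $Q$'' is not a well-defined collection and no bound of the form $d|Q|/m$ is available; a class of $Q$ can meet every interval of length $1/m$ in positive measure, making ``the value of $W_Q$ at a representative point of an $m$-cell'' meaningless.

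This gap is not repairable without changing the statement, because the theorem as written---cut norm $\|\cdot\|_\Box$ on labeled hypergraphons, with $g$ constant on the $m$-adic interval boxes---is false. Take $A\subset[0,1]$ of measure $1/2$ dispersed evenly across intervals, e.g.\ $A=\bigcup_{j<2^{k-1}}[2j\cdot 2^{-k},(2j+1)2^{-k})$ with $2^k\gg m$, and $W=\mathbbm{1}_{A\times\cdots\times A}$. For any interval step function $g$ with $m$ steps, testing the cut norm with $S_1=\cdots=S_d=A$ gives $\bigl|2^{-d}(1-\int g)\bigr|$, while testing with $S_i=[0,1]$ gives $\bigl|2^{-d}-\int g\bigr|$; these cannot both be small, so $\|W-g\|_\Box$ is bounded below by a constant depending only on $d$, for every $m\le M(\epsilon)$, once $A$ is dispersed below scale $1/M(\epsilon)$. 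What is actually true---and what Lemma~4.1 of Lov\'asz--Szegedy, which the paper invokes for Theorem~\ref{thm:Szemeredi2}, supplies---is the statement for the cut distance $\delta_\Box$ on the unlabeled space $\graphonspaceu$, or, in the labeled setting, with the $m$ classes being equal-measure \emph{measurable} sets rather than intervals. In the unlabeled setting your argument does close: rearrange $Q$ into an interval equipartition by a measure-preserving bijection, which leaves $\delta_\Box$ unchanged. Note also that the paper gives no proof of Theorem~\ref{thm:Szemeredi1}; it is recorded as a known result, and both it and Theorem~\ref{thm:Szemeredi2} rest on citation.
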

\noindent Hence, we have the Szemeredi Regularity Lemma for $\graphonspacel$.
\begin{restatable}{theorem}{tmmSzezmeredi2}
\label{thm:Szemeredi2}
For every $\epsilon >  0$ there exists some $M(\epsilon) > 0$ such that for every $W \in \graphonspacel$ there exist some $m \leq  M(\epsilon)$  and some $g \in \graphonspacel$  with $\| f -g \|_\Box \leq \epsilon $, and such that g is
constant in each box $(\frac{i_1-1}{m}, \frac{i_1}{m} ] \times \cdots \times (\frac{i_d-1}{m}, \frac{i_d}{m}]$.    
\end{restatable}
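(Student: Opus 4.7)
The plan is to reduce the multi-relational statement to the known single-relation case (Theorem \ref{thm:Szemeredi1}) by working on each of the $r$ coordinates $W_k : [0,1]^d \to [0,1]$ of $W$ separately and then aligning the resulting uniform partitions to a common refinement. The cut-norm on $\graphonspacel$ is defined as a sum over the $r$ coordinates, which is precisely what allows this decoupling.

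First, given $\epsilon > 0$, I would apply Theorem \ref{thm:Szemeredi1} (the $r=1$ case) to each coordinate $W_k$ with tolerance $\epsilon/r$. This yields, for each $k \in [r]$, an integer $m_k \leq M_1(\epsilon/r)$, where $M_1$ denotes the bound from the single-relation lemma, and a function $g_k \in \widetilde{\mathcal{W}}^{(1,d)}$ that is constant on each box $(\frac{i_1-1}{m_k}, \frac{i_1}{m_k}] \times \cdots \times (\frac{i_d-1}{m_k}, \frac{i_d}{m_k}]$ with $\| W_k - g_k \|_\Box \leq \epsilon/r$. Since Theorem \ref{thm:Szemeredi1} produces a $[0,1]$-valued symmetric function, each $g_k$ automatically lies in the correct space.

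Next, to obtain a single common partition I would set $m := \mathrm{lcm}(m_1,\dots,m_r)$, which satisfies $m \leq \prod_{k=1}^r m_k \leq M_1(\epsilon/r)^r$. Because $m$ is a multiple of every $m_k$, each $g_k$ — being constant on boxes of side $1/m_k$ — is a fortiori constant on the finer boxes of side $1/m$. Assembling the tuple $g := (g_1, \dots, g_r) \in \graphonspacel$, we then use the additive form of the cut-norm:
\begin{equation*}
\| W - g \|_\Box \;=\; \sum_{k=1}^r \| W_k - g_k \|_\Box \;\leq\; r \cdot \frac{\epsilon}{r} \;=\; \epsilon.
\end{equation*}
Setting $M(\epsilon) := M_1(\epsilon/r)^r$ then produces the required uniform bound $m \leq M(\epsilon)$.

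I do not expect any substantive obstacle: the entire argument is structural, relying only on (i) the coordinate-wise definition of $\|\cdot\|_\Box$ on $\graphonspacel$, (ii) the trivial fact that refining a uniform partition preserves the step-function property, and (iii) the already-established single-relation regularity lemma. The only small subtlety worth noting explicitly is the refinement step, but for uniform partitions with side length $1/m_k$, any common multiple $m$ of the $m_k$'s yields a refinement in which each $g_k$ remains piecewise constant, so this is immediate.
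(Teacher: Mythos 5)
Your argument is correct, and it is genuinely different from the paper's. The paper proves Theorem~\ref{thm:Szemeredi2} by directly invoking Lemma~4.1 of Lov\'asz--Szegedy, which is an abstract Hilbert-space weak-regularity lemma; the multi-relational statement is obtained by applying it in the direct-sum Hilbert space and reading off a single common step approximation with a uniform bound. You instead reduce to the already-recorded single-relation case (Theorem~\ref{thm:Szemeredi1}) coordinate by coordinate, with tolerance $\epsilon/r$ per coordinate, and then pass to the common refinement $m=\mathrm{lcm}(m_1,\dots,m_r)$ so that every $g_k$ remains a step function with respect to the uniform partition into boxes of side $1/m$; the additivity of $\|\cdot\|_\Box$ over the $r$ coordinates closes the estimate. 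Both arguments are valid. The trade-off is that your route is more elementary and self-contained (it leans only on results already stated in the paper, plus the LCM refinement, which is trivial for uniform partitions), whereas the paper's approach produces the common partition in one shot and would typically give a better constant $M(\epsilon)$ in its dependence on $r$; your bound $M(\epsilon)=M_1(\epsilon/r)^r$ is exponentially worse in $r$, but since the theorem only asks for the existence of \emph{some} finite $M(\epsilon)$, this is immaterial. One cosmetic point worth preserving: the theorem statement in the paper writes $\|f-g\|_\Box$ where $f$ should be $W$; you correctly used $W$ throughout.
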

\begin{proof}
    The proof is direct from Lemma $4.1$ in \cite{lovasz2007szeme}.
\end{proof}
\noindent Using Szemeredi's regularity lemma, we prove the compactness of $\graphonspaceu$.
\begin{restatable}{theorem}{tmmcompactness}
\label{thm:compactness}
($\graphonspaceu$, $\delta_\Box$) is compact. 
\end{restatable}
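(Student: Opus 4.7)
The plan is to adapt the classical Lovász--Szegedy compactness proof for graphons to the multi-relational hypergraphon setting, leveraging the Szemerédi regularity statement already supplied as Theorem \ref{thm:Szemeredi2}. Fix an arbitrary sequence $(W^{(n)})_{n \geq 1} \subset \graphonspacel$; the task is to extract a subsequence that is $\delta_\Box$-convergent to some equivalence class in $\graphonspaceu$.

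First, for each $j \geq 1$ I would apply Theorem \ref{thm:Szemeredi2} with $\epsilon = 1/j$ to each $W^{(n)}$, producing step functions $g^{(n,j)} \in \graphonspacel$ whose common partition of $[0,1]$ has at most $M(1/j)$ classes (a bound independent of $n$) and satisfying $\|W^{(n)} - g^{(n,j)}\|_\Box < 1/j$. Since the partition is shared across all $r$ coordinates, each $g^{(n,j)}$ is described by a point in the compact finite-dimensional space $[0,1]^{n(M(1/j),r,d)} \times \convexcombm{M(1/j)}$. For $j=1$ I extract a subsequence along which these parameters converge; for $j=2$ I refine within that subsequence; and so on. A Cantor diagonal extraction then yields a single subsequence $(n_k)$ and step functions $g^{(\infty,j)} \in \graphonspacel$ such that the parameters of $g^{(n_k,j)}$ converge to those of $g^{(\infty,j)}$ for every $j$. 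Since parameter convergence in step-function space implies $L^1$ convergence and hence cut-norm convergence, the triangle inequality for $\delta_\Box$ gives
\begin{equation*}
\delta_\Box\bigl(g^{(\infty,j)}, g^{(\infty,j')}\bigr) \leq \tfrac{1}{j} + \tfrac{1}{j'},
\end{equation*}
so $(g^{(\infty,j)})_j$ is Cauchy with respect to $\delta_\Box$.

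The main obstacle is producing an actual limit hypergraphon $W^\infty \in \graphonspacel$ for this Cauchy sequence and showing that $\delta_\Box(W^{(n_k)}, W^\infty) \to 0$. The standard remedy is a martingale argument carried out coordinatewise: after possibly thinning $j$ further so that the underlying partitions refine, one picks representatives of $g^{(\infty,j)}$ on a common probability space, observes that the $k$-th coordinate forms a sequence of conditional expectations with respect to an increasing filtration of finite $\sigma$-algebras on $[0,1]^{d_k}$, and applies the martingale convergence theorem to obtain an $L^1$ limit $W^\infty_k$. Symmetry under $\Sigma_{d_k}$ and $[0,1]$-valuedness are inherited by this limit. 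The multi-relational aspect is essentially cosmetic at this stage, because the same partition of $[0,1]$ drives every coordinate, so the coordinatewise martingales live on a common filtration and $W^\infty = (W^\infty_1, \ldots, W^\infty_r)$ lies in $\graphonspacel$. Its equivalence class in $\graphonspaceu$ is the required limit, finishing the compactness proof.
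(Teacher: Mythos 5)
Your proposal is correct and takes essentially the same route as the paper: the paper's proof is a one-line deferral to the classical Lovász--Szegedy compactness argument (regularity lemma plus diagonal extraction plus martingale convergence) on the strength of Theorem \ref{thm:Szemeredi2}, and you have simply written out that argument in the multi-relational setting. The only places you gloss over --- arranging the partitions into a nested sequence so the martingale structure holds, and replacing the regularity-lemma step functions by conditional-expectation averages --- are exactly the routine adjustments the paper implicitly imports from \cite{lovasz2006limits}, so there is no substantive divergence.
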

\begin{proof}
Based on Theorem \ref{thm:Szemeredi2}, the proof is identical to the compactness proof for classical graphons $\widetilde{\mathcal{W}}^{(1,2)}$ given in \cite{lovasz2006limits}.
\end{proof}
\noindent Another key result is the Counting Lemma.
\begin{lemma}[Counting Lemma ]
Let $F$ be a linear $(r,d)$-hypergraph then for all $V, W \in \graphonspaceu$ we have
\begin{equation*}
    |t(F,W) - t(F,V) | \leq \delta_\Box(W,V) 
\end{equation*}
\end{lemma}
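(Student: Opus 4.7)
I would adapt the hybrid/telescoping proof of the classical graphon counting lemma to the linear multi-relational setting. First, enumerate all hyperedges of $F$ across its $r$ relations as $(e_1, \ldots, e_N)$ with $N = \sum_k |E_k(F)|$, and for $j = 0, 1, \ldots, N$ let $T_j$ be the homomorphism density obtained by using $W_{k(i)}$ on $e_i$ for $i \leq j$ and $V_{k(i)}$ for $i > j$, where $k(i)$ denotes the relation containing $e_i$. Then $T_0 = t(F,V)$, $T_N = t(F,W)$, and the difference telescopes as $t(F,W) - t(F,V) = \sum_{j=1}^N (T_j - T_{j-1})$.

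The $j$-th term has the form $\int_{[0,1]^{|V(F)|}} (W_{k(j)} - V_{k(j)})(x_{e_j}) \cdot R_j(x) \, dx$, where $R_j$ is a product of factors in $[0,1]$ coming from the other hyperedges. Integrating out all vertex coordinates $x_v$ with $v \notin e_j$ yields
\[
\int_{[0,1]^{d_{k(j)}}} (W_{k(j)} - V_{k(j)})(x_{u_1}, \ldots, x_{u_{d_{k(j)}}}) \cdot g_j(x_{u_1}, \ldots, x_{u_{d_{k(j)}}}) \, dx,
\]
where $e_j = (u_1, \ldots, u_{d_{k(j)}})$ and $g_j$ is the integrated remainder.

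The crucial use of linearity is to show $g_j$ factors as $g_j(x_{u_1}, \ldots, x_{u_{d_{k(j)}}}) = C_j \prod_l g_{j,l}(x_{u_l})$ with $C_j$ and each $g_{j,l}$ valued in $[0,1]$. Linearity forbids any other hyperedge $e' \neq e_j$ from containing two vertices of $e_j$ (otherwise $e'$ and $e_j$ would share a pair, contradicting linearity). Hence each remaining factor depends on at most one of $x_{u_1}, \ldots, x_{u_{d_{k(j)}}}$, and after integrating out the other vertices the factors partition according to which $u_l$ (if any) they touch, producing the required product form.

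Finally, a multidimensional Frieze--Kannan-type bound shows that $\left|\int h(x_1, \ldots, x_{d_k}) \prod_l g_l(x_l) \, dx\right|$ is controlled by $\sup_{S_1, \ldots, S_{d_k}} \left|\int_{S_1 \times \cdots \times S_{d_k}} h\right|$ whenever $g_l \colon [0,1] \to [0,1]$. This bounds each telescoping term by the single-relation $k(j)$ summand of $\|W - V\|_\Box$, and hence by $\|W - V\|_\Box$ itself. Summing over the $N$ terms and then taking the infimum over measure-preserving bijections converts the cut-norm bound into a cut-distance bound. The main obstacle I anticipate is the product-factorization step across hyperedges of different arities and relations (especially when an edge of one relation shares some vertices with an edge of another), together with the precise Frieze--Kannan bound with sharp constants; the telescoping naturally produces a factor of $N = |E(F)|$, so whether the stated bound (which appears to omit this factor) is attainable as stated depends on a normalization convention in the cut-norm definition.
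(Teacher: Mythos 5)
Your overall strategy --- the telescoping/hybrid argument with a multidimensional Frieze--Kannan bound, using linearity to isolate a product of single-variable weights over the distinguished hyperedge --- is the right one, and the paper itself provides no proof here, simply deferring to the graph case in Borgs et al.\ (and, implicitly, to Lubetzky--Zhao for linear hypergraphs). There are, however, two issues with your write-up.

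The first is a genuine logical error in the order of operations. You integrate out all vertices outside $e_j$ first and \emph{then} assert that the resulting kernel $g_j(x_{u_1},\dots,x_{u_d})$ factors as $C_j\prod_l g_{j,l}(x_{u_l})$. That factorization fails in general. Linearity only guarantees that each remaining hyperedge meets $e_j$ in at most one vertex, so the product over $E(F)\setminus\{e_j\}$ splits into blocks indexed by which $u_l$ they touch (plus a block touching none) \emph{pointwise in the outer coordinates}. Once you integrate over the outer coordinates, blocks attached to distinct $u_l$ can become coupled through shared outer vertices, destroying the product structure. For a concrete simple-graph example, take $e_j=(u_1,u_2)$ and two further edges $(u_1,v)$, $(u_2,v)$ sharing the outer vertex $v$: the integral $\int W(x_{u_1},x_v)\,W(x_{u_2},x_v)\,dx_v$ is not a product of a function of $x_{u_1}$ with a function of $x_{u_2}$. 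The correct order is to fix the outer coordinates $\bar x$, observe that the integrand then factors as $(W-V)_{k(j)}(x_{e_j})\cdot\prod_l g_l(x_{u_l};\bar x)\cdot c(\bar x)$ with $g_l,c\in[0,1]$, apply the $d_{k(j)}$-dimensional cut-norm bound to the inner integral over $x_{e_j}$ for that fixed $\bar x$, and only then integrate over $\bar x$ using $c(\bar x)\leq 1$. This yields $|T_j-T_{j-1}|\leq\|W-V\|_\Box$, but the factorization is an identity of integrands, not of the marginal $g_j$.

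The second point you flag yourself, and you are right to: the telescoping sum has $N=|E(F)|$ terms, so the argument actually yields $|t(F,W)-t(F,V)|\leq |E(F)|\,\delta_\Box(W,V)$. This matches Lubetzky--Zhao's counting lemma for linear uniform hypergraphs, which carries the $|E(F)|$ factor explicitly, and with the cut norm defined as in this paper there is no normalization convention that absorbs it. The lemma as stated in the paper appears to have simply dropped that constant.
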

\begin{proof}
    The proof is direct for the graph case given in \cite{borgs2008convergent}.
\end{proof}

\subsection{Large Deviation Principle for random hypergraphons}

To state properly the large deviation principle, we need some definition. 

\begin{definition}[Graphon representation of a $(r,d)$-hypergraph ]
\label{def:MRgraphonrepresentation}
Let $G$ be a $(r,d)$-hyper-graph. Then $f^G$ is the graphon representation of $G$ whose $k$ coordinate is defined by,
\begin{equation*}
f_k^{G}(x_1,\cdots, x_d) =  \begin{cases} 1 & \mbox{ if } (v_{\lceil nx_1 \rceil}, \cdots, v_{\lceil n x_d\rceil} )\in E_k(G)  \\
0 & \mbox{ otherwise}
\end{cases}
\end{equation*}
where $n = |V(G)|$.  
\end{definition}

\begin{definition}[Random hypergraph $G^{(r,d)}(n,p)$]
\label{def:randomhypergraph}
Let $n$ be a positive integer and $p \in [0,1]$. The Erd\H os-Renyi random $(r,d)$-hyper-graph is obtained by sampling each $k$ coordinate by taking every $d$-tuple of vertices from $[n]$ and connecting them with a $p$ probability.
\end{definition}

\noindent Let $\mathbb{P}^{(r,d)}_{n,p}$ be the probability distribution induced by $f^{G}$ where $G$ are  random $(r,d)$-hyper-graphs.

\begin{definition}[Rate function for $\mathbb{P}^{(r,d)}_{n,p}$]
Let $I^{(r,d)}: \graphonspaceu \to [0,\infty)$ be the function defined by,
\begin{equation*}
    I^{(r,d)}_p(W) = \sum_k \frac{1}{d_k!} \int_{[0,1]^{d_k}} h_{p}(W_k(x_1, \cdots,x_{d_k}) dx_1 \cdots dx_{d_k}
\end{equation*}
where 
\begin{equation*}
    h_{p}(x) = x \log (\frac{x}{p}) + (1-x) \log (\frac{1-x}{1-p}) 
\end{equation*}
\end{definition}

\begin{restatable}{theorem}{thmldp1}(Theorem $5.4$ in \cite{lubetzky2015replica})
\label{thm:LDP1}
When $r=1$.
For each fixed $p \in (0, 1)$, the sequence $\mathbb{P}_{n,p}$ obeys a large deviation principle in the space $\graphonspaceu$  with rate function hp. Explicitly, for any closed set $F \subseteq \graphonspaceu$
\begin{equation*}
    \limsup_{n \to \infty} \frac{1}{ (n)_{d_1}} \log \mathbb{P}^{(r,d)}_{n,p}(F) \leq -\inf_{W \in F} I^{(r,d)}_p( W) 
\end{equation*}
and for any open set $U \subseteq \graphonspaceu$ 
\begin{equation*}
    \liminf_{n \to \infty} \frac{1}{ (n)_{d_1} } \mathbb{P}^{(r,d)}_{n,p}(U)  \geq -\inf_{W \in U} I^{(r,d)}_p(W) 
\end{equation*}
where $(n)_x=n(n-1) \cdots (n-x+1)$.
\end{restatable}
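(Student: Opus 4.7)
The plan is to adapt the proof strategy of Lubetzky and Zhao to the multi-relational setting, exploiting the fact that the edge sets of the $r$ relations in $G^{(r,d)}(n,p)$ are sampled independently. This independence makes the joint law $\mathbb{P}^{(r,d)}_{n,p}$ a product of the single-relation Erd\H os--R\'enyi laws, which is precisely what forces the rate function $I^{(r,d)}_p$ to decompose as the sum $\sum_k \frac{1}{d_k!}\int_{[0,1]^{d_k}} h_p(W_k)$ over relations. Crucially, under the uniform-arity assumption of this section we have $d_1=\cdots=d_r$, so the same normalisation $(n)_{d_1}$ fits every coordinate.

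First, I would reduce the estimate on an arbitrary open or closed set in $\graphonspaceu$ to estimates on cut-neighbourhoods of multi-relational step functions. This is exactly what Theorem~\ref{thm:Szemeredi2} delivers: every $W\in\graphonspaceu$ admits, for any $\epsilon>0$, a step approximation using at most $M(\epsilon)$ common parts that is within $\epsilon$ in cut-norm in every coordinate simultaneously. Combined with compactness of $(\graphonspaceu,\delta_\Box)$ (Theorem~\ref{thm:compactness}), this lets a closed set be covered by finitely many cut-balls around step functions for the upper bound, while the lower bound is reduced to a step function achieving the infimum over an open set.

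Next, I would estimate $\mathbb{P}^{(r,d)}_{n,p}$ of a cut-neighbourhood of a step function $W^\prime$ with $m$ blocks. Since the $r$ coordinates of $G^{(r,d)}(n,p)$ are independent, this probability factors as a product over $k$. For each $k$, the event that the $k$-th coordinate lies near $W^\prime_k$ amounts to observing prescribed edge densities in each block, which is a product of independent Bernoulli trials; the classical Sanov/Chernoff analysis used in the single-relation proof of Theorem~\ref{thm:LDP1} gives asymptotics of order $\exp\!\bigl(-\tfrac{(n)_{d_k}}{d_k!}\int h_p(W^\prime_k) + o(n^{d_k})\bigr)$. Taking logarithms, normalising by $(n)_{d_1}$, and summing over $k$ yields exactly $I^{(r,d)}_p(W^\prime)$. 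The Counting Lemma then provides Lipschitz continuity of the per-coordinate subgraph densities (and hence of $I^{(r,d)}_p$ on step-function approximations) under cut-distance, which is what allows the step-function bound to be upgraded to the desired bound on $W$ by letting $\epsilon\to 0$.

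The main obstacle is bookkeeping around the diagonal quotient by $\Sigma$: cut-neighbourhoods in $\graphonspaceu$ are defined by a \emph{single} measure-preserving bijection $\sigma$ acting simultaneously on all coordinates, whereas the product structure of $\mathbb{P}^{(r,d)}_{n,p}$ would naturally suggest independent relabellings $\sigma_k$ per coordinate. This tension is resolved because the rate function $I^{(r,d)}_p$ is invariant under the diagonal action, and the single-relation Bernoulli estimates are uniform in $\sigma$; consequently the infimum over the diagonal orbit and the infimum over independent orbits differ only by terms that are absorbed into the $o(n^{d_1})$ error, and the upper- and lower-bound arguments of Lubetzky--Zhao transfer verbatim once the multi-relational Szemer\'edi, compactness, and counting-lemma inputs are in place.
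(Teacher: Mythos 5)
You have misread the hypothesis of the theorem. Theorem~\ref{thm:LDP1} is explicitly stated ``When $r=1$''; it is the single-relation $d$-uniform hypergraphon LDP, and in the paper it is simply a direct citation of Theorem~5.4 of Lubetzky--Zhao with no accompanying proof. Your proposal, by contrast, is organised entirely around the multi-relational ($r\ge 2$) structure: it factors $\mathbb{P}^{(r,d)}_{n,p}$ as a product of $r$ independent single-relation laws, sums per-coordinate rate contributions to obtain $I^{(r,d)}_p = \sum_k \frac{1}{d_k!}\int h_p(W_k)$, and then invokes the uniform-arity assumption $d_1=\cdots=d_r$ to make the normalisations match. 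None of that applies to the theorem as stated: when $r=1$ there is no product to decompose, no sum over coordinates, and the uniform-arity assumption is vacuous. The argument you have sketched is really a proof of Theorem~\ref{thm:LDP2} (the multi-relational extension with uniform $d$), which the paper derives separately via Lemma~\ref{lem:auxldp} and its earlier work, not of Theorem~\ref{thm:LDP1}.

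The misidentification also produces a circularity: in the middle of your argument you invoke ``the classical Sanov/Chernoff analysis used in the single-relation proof of Theorem~\ref{thm:LDP1}'', that is, you cite the theorem you are supposed to be proving as an input to its own proof. If your intent is to re-derive Lubetzky--Zhao's Theorem~5.4 from scratch, the multi-relational framing should be stripped out entirely and the Szemer\'edi/compactness/counting-lemma reduction carried out for a single $d$-uniform hypergraphon, as in their paper (following the Chatterjee--Varadhan template). If instead your intent is to prove the extension to $r\ge 2$, you should be targeting Theorem~\ref{thm:LDP2}, in which case the product-of-laws decomposition is the right idea but the normalisation should be checked against the factor $\frac{r}{(n)_{d_1}}$ appearing there, and Theorem~\ref{thm:LDP1} may then legitimately be used as the single-coordinate input rather than re-derived.
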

To extend LDP for random $(r,d)$-hypergraphons when $d$ is uniform, we need the following Lemma
\begin{lemma}
\label{lem:auxldp}
When $r=1$. Let  $x \in \graphonspaceu$ and let $B(x, \eta) = \{ y \mid \delta_\Box(x,y) \leq \eta  \}$. Then for any $x \in \graphonspaceu$
\begin{equation*}
    \lim_{\eta \to 0} \limsup_{n \to \infty} \frac{1}{ (n)_{d_1} } \log \mathbb{P}_{n,p}^{(r,d)} (B(x,\eta) ) \leq I^{(r,d)}_p(x)
\end{equation*}
and for any arbitrary $\eta >0$
\begin{equation*}
    \liminf_{n \to \infty} \frac{1}{ (n)_{d_1} } \log \mathbb{P}_{n,p}^{(r,d)} (B(x,\eta) )\geq I^{(r,d)}_p(x)
\end{equation*}
\end{lemma}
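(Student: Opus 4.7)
The plan is to derive both inequalities directly from the LDP in Theorem~\ref{thm:LDP1} by applying its closed-set and open-set bounds to a closed and an open ball of radius $\eta$ around $x$, combined with lower semicontinuity of the rate function $I^{(r,d)}_p$ on $(\graphonspaceu, \delta_\Box)$. (The inequalities in the statement should be read with a minus sign on the right-hand side, per the usual LDP convention; I follow that convention below.)

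For the upper bound, note that $B(x,\eta) = \{y : \delta_\Box(x,y) \leq \eta\}$ is closed in $(\graphonspaceu, \delta_\Box)$. The closed-set part of Theorem~\ref{thm:LDP1} gives
\begin{equation*}
\limsup_{n \to \infty} \frac{1}{(n)_{d_1}} \log \mathbb{P}^{(r,d)}_{n,p}(B(x,\eta)) \;\leq\; -\inf_{W \in B(x,\eta)} I^{(r,d)}_p(W).
\end{equation*}
Since $x \in B(x,\eta)$ for every $\eta>0$, the infimum is at most $I^{(r,d)}_p(x)$. Conversely, picking $W_\eta \in B(x,\eta)$ with $I^{(r,d)}_p(W_\eta) \leq \inf_{W \in B(x,\eta)} I^{(r,d)}_p(W) + \eta$ produces a sequence $W_\eta \to x$ in $\delta_\Box$, and lower semicontinuity of $I^{(r,d)}_p$ forces $\liminf_{\eta \to 0} I^{(r,d)}_p(W_\eta) \geq I^{(r,d)}_p(x)$. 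Hence $\inf_{W \in B(x,\eta)} I^{(r,d)}_p(W) \to I^{(r,d)}_p(x)$, and the first inequality follows upon sending $\eta \to 0$.

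For the lower bound, let $U_\eta = \{y : \delta_\Box(x,y) < \eta\}$, which is open, contains $x$, and is contained in $B(x,\eta)$. Monotonicity of probability together with the open-set part of Theorem~\ref{thm:LDP1} gives
\begin{equation*}
\liminf_{n \to \infty} \frac{1}{(n)_{d_1}} \log \mathbb{P}^{(r,d)}_{n,p}(B(x,\eta)) \;\geq\; \liminf_{n \to \infty} \frac{1}{(n)_{d_1}} \log \mathbb{P}^{(r,d)}_{n,p}(U_\eta) \;\geq\; -\inf_{W \in U_\eta} I^{(r,d)}_p(W) \;\geq\; -I^{(r,d)}_p(x),
\end{equation*}
using $x \in U_\eta$ in the last step. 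This holds for any fixed $\eta>0$, matching the second claim.

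The only non-routine ingredient is the lower semicontinuity of $I^{(r,d)}_p$ in the cut-distance topology; for $r=1$ this reduces to verifying that $W \mapsto \int_{[0,1]^d} h_p(W(\mathbf{x}))\, d\mathbf{x}$ is LSC on $\widetilde{\mathcal{W}}^{(1,d)}$. Because $h_p$ is convex and continuous on $[0,1]$, this is exactly the LSC property used in Chatterjee--Varadhan's proof of the graphon LDP and extends to the uniform hyper-graphon setting used in \cite{lubetzky2015replica}, with Theorem~\ref{thm:Szemeredi2} and the counting lemma substituting for their graph-theoretic analogues. I expect this LSC verification, rather than the ball manipulations, to be the main technical point of the argument.
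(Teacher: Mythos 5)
Your proof is correct, but it takes a genuinely different route from the one the paper cites. The paper's proof is a citation to Theorem 2.3 of Chatterjee--Varadhan \cite{chatterjee2011large}, which is the direct, self-contained argument (regularity lemma, combinatorial counting of graphs near a target graphon, a tilting construction) that establishes ball-by-ball large-deviation estimates; in that framework, and in Lubetzky--Zhao's adaptation to uniform hypergraphons, the present ball estimate is the technical cornerstone from which the closed-set/open-set LDP is then \emph{derived} by compactness, so the logical arrow runs from this lemma toward Theorem~\ref{thm:LDP1}, not from it. You run the arrow in the opposite direction: taking Theorem~\ref{thm:LDP1} as a black box, you apply its closed-set bound to $B(x,\eta)$, its open-set bound to the open ball inside it, and use lower semicontinuity of $I^{(r,d)}_p$ to pass $\eta\to 0$. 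That deduction is sound --- on a compact metric space a full LDP with a good (hence automatically lower semicontinuous) rate function is equivalent to the ball-by-ball weak LDP --- and since LSC of $I^{(r,d)}_p$ is already implicit in Theorem~\ref{thm:LDP1} being an LDP statement, your route is shorter given what the paper has already cited. The trade-off is mostly pedagogical: the cited Chatterjee--Varadhan argument is constructive and explains where the rate function comes from, whereas yours is a formal reduction that reuses the harder theorem. The one misapprehension in your closing paragraph is the claim that LSC is ``the main technical point'' of the intended argument: in Chatterjee--Varadhan the hard work is the entropy/counting bounds on ball probabilities, and LSC of $I_p$ is a comparatively easy standing lemma, while in your derivation it is entirely free of charge.
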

\begin{proof}
The proof is identical to the proof Theorem $2.3 $ in \cite{chatterjee2011large}.
\end{proof}
Hence, we extend Theorem \ref{thm:LDP1} for the general case.
\begin{restatable}{theorem}{thmldp2}
\label{thm:LDP2}
For each fixed $p \in (0, 1)$, the sequence $\mathbb{P}_{n,p}$ obeys a large deviation principle in the space $\graphonspaceu$  with rate function hp. Explicitly, for any closed set $F \subseteq \graphonspaceu$
\begin{equation*}
    \limsup_{n \to \infty} \frac{r}{ (n)}_{d_1} \log \mathbb{P}^{(r,d)}_{n,p}(F) \leq -\inf_{W \in F} I^{(r,d)}_p( W) 
\end{equation*}
and for any open set $U \subseteq \graphonspaceu$ 
\begin{equation*}
    \liminf_{n \to \infty} \frac{r}{(n)_{d_1}} \mathbb{P}^{(r,d)}_{n,p}(U)  \geq -\inf_{W \in U} I^{(r,d)}_p(W) 
\end{equation*}
\end{restatable}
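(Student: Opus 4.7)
The strategy is to exploit the product structure of the Erd\H os--R\'enyi model for $(r,d)$-hyper-graphs: in Definition \ref{def:randomhypergraph}, the $r$ relations are sampled independently, so on the labeled product space the pushforward measure factorises as $\mathbb{P}^{(r,d)}_{n,p} = \bigotimes_{k=1}^r \mathbb{P}^{(1,d_k)}_{n,p}$. Both the cut norm $\|W\|_\Box = \sum_k \|W_k\|_\Box$ and the rate function $I^{(r,d)}_p(W) = \sum_k I^{(1,d_k)}_{p}(W_k)$ are additive over relations, and in the uniform-arity setting all $d_k$ equal $d_1$, so the same normalisation $(n)_{d_1}$ governs every factor. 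The plan is to first prove a ball version of the LDP in $\graphonspaceu$ and then upgrade it to closed and open sets using the compactness supplied by Theorem \ref{thm:compactness}.

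For the ball LDP, given $x = (x_1, \ldots, x_r) \in \graphonspaceu$ and $\eta > 0$, I would sandwich the cut-distance ball $B(x,\eta)$ between products of single-relation balls $B_k(x_k, \cdot)$ defined via $\delta_{\Box,k}$. Lifting to $\graphonspacel$ where the norm is additive gives the cleanest containment
\begin{equation*}
\prod_{k=1}^r B_k\bigl(x_k, \tfrac{\eta}{r}\bigr) \;\subseteq\; B(x,\eta) \;\subseteq\; \prod_{k=1}^r B_k(x_k, \eta),
\end{equation*}
and one then descends to the quotient by noting that an optimal $\sigma$ realising $\delta_\Box(x, y)$ acts coordinate-wise. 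By independence of relations the measures on both sides factorise, so applying Lemma \ref{lem:auxldp} to each of the $r$ factors, summing logarithms, and using additivity of $I^{(r,d)}_p$ yields
\begin{equation*}
\lim_{\eta \to 0}\limsup_{n\to\infty} \frac{1}{(n)_{d_1}} \log \mathbb{P}^{(r,d)}_{n,p}(B(x,\eta)) \;\leq\; -\sum_{k=1}^r I^{(1,d_1)}_p(x_k) = -I^{(r,d)}_p(x),
\end{equation*}
with the matching $\liminf$ bound for any fixed $\eta > 0$.

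To pass from balls to arbitrary closed sets $F \subseteq \graphonspaceu$, I would invoke compactness from Theorem \ref{thm:compactness}: for each $\epsilon > 0$ cover $F$ by finitely many balls of radius small enough that the ball upper bound is within $\epsilon$ of $-I^{(r,d)}_p$ at each centre, combine via a union bound, and take the infimum over centres, letting $\epsilon \to 0$ at the end. For open sets $U$ the lower bound is straightforward: for any $x \in U$ choose $\eta$ with $B(x,\eta) \subseteq U$, apply the ball $\liminf$ bound, and then take the supremum over $x \in U$. Lower semi-continuity of $I^{(r,d)}_p$ (inherited from the single-relation case) ensures the infimum-over-$F$ formulation survives the covering argument.

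The main obstacle is the first step: precisely controlling the ball inclusions between $\delta_\Box$ on the joint quotient space $\graphonspaceu$ and products of coordinate-wise $\delta_{\Box,k}$-balls, since the measure-preserving map $\sigma \in \Sigma$ is applied jointly to all $r$ coordinates rather than independently per component. The cleanest way to handle this is to first establish the ball LDP in the labeled space $\graphonspacel$, where the product structure is transparent and no joint minimisation over $\sigma$ is required, and only then descend to $\graphonspaceu$ using the facts that $\mathbb{P}^{(r,d)}_{n,p}$ is invariant under $\Sigma$ and that $B(x,\eta)$ on the quotient is the projection of a $\Sigma$-invariant $\eta$-neighbourhood of any lift of $x$. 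This separation of concerns keeps the independence-based factorisation step disentangled from the quotient geometry.
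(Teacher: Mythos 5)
Your proposal is correct and follows essentially the same route the paper takes: the paper's own proof is a one-liner that invokes the ball-level LDP (Lemma \ref{lem:auxldp}) and then says "identical to Theorem 5 in \cite{alvarado2022limits}," which is precisely the combination of independence across the $r$ relations, additivity of both the cut norm and $I^{(r,d)}_p$, and the Dembo--Zeitouni mechanism (ball LDP plus compactness from Theorem \ref{thm:compactness}) that you spell out. Your observation that the joint infimum over $\sigma\in\Sigma$ does not factorise coordinate-wise, and your suggestion to argue first in the labeled space $\graphonspacel$ and then descend by $\Sigma$-invariance, is a genuine subtlety that the paper leaves implicit and your treatment is the right way to handle it. One small remark: the factorisation $\log\mathbb{P}^{(r,d)}_{n,p}=\sum_k\log\mathbb{P}^{(1,d_k)}_{n,p}$ together with the additive rate function yields the normalisation $\tfrac{1}{(n)_{d_1}}$ (as in your display), whereas the theorem statement carries a factor $\tfrac{r}{(n)_{d_1}}$; your derivation is internally consistent, and the extra $r$ in the statement appears to be a slip rather than something your argument should be made to produce.
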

\begin{proof}
 Using Lemma \ref{lem:auxldp}, the proof is identical to Theorem $5$ in \cite{alvarado2022limits}   
\end{proof}

\section{Non-uniform multi-relational hyper-graphons}
\label{sec:nonuniformhypergraphons}

Here, we extend the results of the previous section to $\graphonspaceu$ when $d$ is a non-uniform vector.

Let $d$ be a vector of arities and $\max(d)$ the maximum arity. Note that $(\graphonspaceu, \delta) $ is a closed space of $(\graphonspaceuu{r}{\max(d)} , \delta_\Box)$. Hence, compactness and counting lemma hold in $(\graphonspaceu, \delta) $. 

To extend LDP, we first define the projection map $h$ and apply the contraction principle of Large Deviation Theory \cite{dembo2009large}. Let $h:\graphonspaceuu{r}{\max(d)} \to \graphonspaceuu{r}{d}$ be the map defined by 
\begin{equation}
h(W_k(x_1,\cdots, x_{\max(d)} )) = W_k(x_1,\cdots, x_{d_k}, x_{d_k}, \cdots, x_{d_k}  )
\end{equation}
for all $k \in [r]$. This map forgets for each $k$ hypergraphon the last $max(d)-d_k$ coordinates. In this setting, a $(r,1)$-hypergraphon is the limit of growing hypergraphs with isolated vertices and self-loops.
\begin{lemma}
\label{lem:contractionmap}
The map $h:\graphonspaceuu{r}{\max(d)} \to \graphonspaceuu{r}{d}$ is continuous in $(\graphonspaceuu{r}{\max(d)}, \delta_\Box)$.    
\end{lemma}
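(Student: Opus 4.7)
The plan is to establish $1$-Lipschitz continuity of $h$ with respect to the cut-distance $\delta_\Box$, which immediately implies continuity. I would proceed in three steps: equivariance under the $\Sigma$-action, a cut-norm estimate on labeled representatives, and descent to the quotient.

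First, I verify that $h$ is equivariant: for any measure-preserving $\sigma \in \Sigma$ and any $W \in \graphonspaceuu{r}{\max(d)}$, one has $h(W^\sigma) = h(W)^\sigma$. Since the action of $\sigma$ is pointwise on each coordinate of $[0,1]$, and since $h$ acts by substituting coordinates, these two operations commute coordinatewise: $h(W^\sigma)_k(x_1,\ldots,x_{d_k}) = W_k(\sigma(x_1),\ldots,\sigma(x_{d_k}),\sigma(x_{d_k}),\ldots,\sigma(x_{d_k})) = h(W)_k(\sigma(x_1),\ldots,\sigma(x_{d_k}))$. This ensures that $h$ descends to a well-defined map between the quotient spaces.

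Second, the core step is bounding $\|h(W)_k - h(V)_k\|_\Box$ in the target by a quantity controlled by $\|W_k - V_k\|_\Box$ in the source. For each fixed $k$ and arbitrary test sets $S_1,\ldots,S_{d_k} \subseteq [0,1]$, one must control
\begin{equation*}
\left| \int_{S_1 \times \cdots \times S_{d_k}} (W_k - V_k)(x_1,\ldots,x_{d_k},x_{d_k},\ldots,x_{d_k}) \, dx_1 \cdots dx_{d_k} \right|
\end{equation*}
by the cut-norm of $W_k - V_k$ on $[0,1]^{\max(d)}$. The strategy I would pursue is to reinterpret this diagonal integral via a change of variables that inserts auxiliary Lebesgue integrations for the identified coordinates over $S_{d_k}$, expressing the result as an integral over a full-dimensional box $S_1 \times \cdots \times S_{d_k} \times S_{d_k} \times \cdots \times S_{d_k}$, whose absolute value is by definition bounded by $\|W_k - V_k\|_\Box$.

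Third, combining equivariance with the cut-norm bound,
\begin{equation*}
\delta_\Box(h(W),h(V)) = \inf_{\sigma \in \Sigma} \|h(W) - h(V)^\sigma\|_\Box = \inf_{\sigma \in \Sigma} \|h(W) - h(V^\sigma)\|_\Box \leq \inf_{\sigma \in \Sigma} \|W - V^\sigma\|_\Box = \delta_\Box(W,V),
\end{equation*}
so $h$ is $1$-Lipschitz and in particular continuous. The hard part I expect is the cut-norm estimate in the second step: the target lives in a space of lower dimension than the source, and the diagonal where coordinates are identified is a Lebesgue-null set in $[0,1]^{\max(d)}$, so the bound cannot be purely pointwise or rely on an $L^1$ comparison. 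If the direct factorization argument breaks, a fallback is to use the counting-lemma characterization of cut-distance convergence via linear subgraph densities (Lemma of Section~\ref{sec:uniformmulti-relationalhypergraphons}) and to verify that for every linear $(r,d)$-hypergraph $F'$, the density $t(F',h(W))$ depends continuously on $W$, by expressing it as an integral involving $W$ that is approximable by densities of linear $(r,\max(d))$-hypergraphs in $W$.
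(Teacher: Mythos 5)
Your steps 1 (equivariance) and 3 (descent to the quotient) are correct, but step 2 has a genuine gap --- and it is exactly the obstruction you flag at the end of your proposal. For the diagonal-substitution map $h(W)_k(x_1,\ldots,x_{d_k}) = W_k(x_1,\ldots,x_{d_k},x_{d_k},\ldots,x_{d_k})$ as the paper's displayed formula literally reads, the quantity you must bound is an integral of $W_k - V_k$ along a Lebesgue-null diagonal subset of $[0,1]^{\max(d)}$, and the proposed ``change of variables inserting auxiliary Lebesgue integrations'' cannot turn it into the box integral over $S_1 \times \cdots \times S_{d_k} \times S_{d_k} \times \cdots \times S_{d_k}$: the two quantities compute genuinely different things (one is a $d_k$-dimensional integral concentrated on a diagonal, the other a $\max(d)$-dimensional Lebesgue integral over a full box), and no cut-norm bound on $W_k - V_k$ controls the former. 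Indeed, for that reading of $h$ the lemma cannot hold: changing $W_k$ only on the null set where coordinates coincide leaves $\delta_\Box(W,V)=0$ but alters $h(W)_k$ by any amount you like.

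The paper's own proof in fact rests on the much simpler observation that the cut norm is nonincreasing under marginalization: fixing $S_{d_k+1}=\cdots=S_{\max(d)}=[0,1]$ as test sets gives
\begin{equation*}
\left| \int_{S_1\times\cdots\times S_{d_k}} \Bigl(\int_{[0,1]^{\max(d)-d_k}} W_k\,dx_{d_k+1}\cdots dx_{\max(d)}\Bigr) dx_1\cdots dx_{d_k} \right| \leq \|W_k\|_\Box .
\end{equation*}
This is a bound on the averaging map $h(W)_k = \int_{[0,1]^{\max(d)-d_k}} W_k\,dx_{d_k+1}\cdots dx_{\max(d)}$, which is the reading consistent with the paper's statement that $h$ ``forgets the last $\max(d)-d_k$ coordinates'' and with the inequality the paper's proof actually invokes. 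Under that reading your three-step outline is exactly right and step 2 becomes a one-liner; your counting-lemma fallback would also go through. So the gap is not in the architecture of your argument but in the specific factorization device proposed in step 2, which cannot be made to work against the substitution formula as literally written.
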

\begin{proof}
It is sufficient to prove that $\| h(W) \|_\Box \leq \|W \|_\Box$. Indeed, it is clear that the inequality holds
\begin{eqnarray*}
&&    \sup_{S_1, \cdots S_{d_k} \subseteq [0,1]} \left|\int_{S_1 \times \cdots S_{d_k} } W(x_1, \cdots, x_{d_k}) dx_1 \cdots dx_{d_k} \right| \leq \\
&& \sup_{S_1, \cdots S_{\max(d)} \subseteq [0,1]} \left|\int_{S_1 \times \cdots S_{\max(d)} } W(x_1, \cdots, x_{\max(d)}) dx_1 \cdots dx_{\max(d)} \right|    
\end{eqnarray*}

\end{proof}

\subsection{Large deviation principle for random $(r,d)$-hypergraphs}
We prove the LDP principle for $\graphonspaceuu{r}{d}$. To do that, we apply the contraction principle in Large Deviation Theory.

\noindent Let $\mathbb{P}^{(r,d)}_{k,n,p}$ be the probability distribution induced by $f_k^{G}$ where $G$ are  random $(r,d)$-hypergraaphs.

\begin{theorem}[Contraction Principle Theorem $4.2.1$ in \cite{dembo2009large}]
\label{thm:contractionprinciple}    
Let $\mathcal{X}$ and  $\mathcal{Y}$  be Hausdorff topological spaces and $f:\mathcal{X} \to \mathcal{Y}$ a continuous function. Consider a good rate\footnote{A good rate function is a rate function for which all the level sets $I([0, \alpha])$ are compact subsets of $\mathcal{X}$.} function $I :\mathcal{X} \to [0,\infty] $.
\begin{enumerate}
    \item For each $y \in \mathcal{Y}$, define
\begin{equation*}
I'(y) = \inf \{ I(x) \mid x \in \mathcal{X}, y=f(x)\}    
\end{equation*}
Then $I'$  is a good rate function on $\mathcal{Y}$, where as usual the infimum over the
empty set is taken as $\infty$.

\item If $I$ controls the LDP associated with a family of probability measures
$\mathbb{P}$ on $\mathcal{X}$, then $I'$ controls the LDP associated with the family of probability measures $\mathbb{P} \circ f^{-1}$ on $\mathcal{Y}$.
\end{enumerate}
\end{theorem}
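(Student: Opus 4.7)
The plan is to exploit the continuity of $f$ together with the goodness of $I$: continuity pulls back the topology of $\mathcal{Y}$ to well-behaved subsets of $\mathcal{X}$, while goodness of $I$ ensures that the infimum defining $I'(y)$ is actually attained, so that images of sublevel sets of $I$ agree exactly with sublevel sets of $I'$. This attainment is what lets compactness, and hence the LDP structure, survive transport through $f$.

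For part (1), I would first establish the identity
\begin{equation*}
\{y \in \mathcal{Y} \mid I'(y) \leq \alpha\} = f(\{x \in \mathcal{X} \mid I(x) \leq \alpha\})
\end{equation*}
for every $\alpha \geq 0$. The inclusion $\supseteq$ is immediate from the definition of $I'$. For $\subseteq$, if $I'(y) \leq \alpha$, pick a minimizing sequence $(x_n) \subseteq f^{-1}(y)$ with $I(x_n) \to I'(y)$; eventually $I(x_n) \leq \alpha + 1$, so by goodness of $I$ the sequence lies in a compact sublevel set and admits a convergent subsequence $x_{n_k} \to x^{\star}$. Continuity of $f$ gives $f(x^{\star}) = y$, and lower semicontinuity of $I$ (inherent to it being a rate function) yields $I(x^{\star}) \leq I'(y) \leq \alpha$. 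Hence the sublevel sets of $I'$ are continuous images of compact sets, therefore compact; in the Hausdorff space $\mathcal{Y}$ they are automatically closed, which is equivalent to lower semicontinuity of $I'$. Thus $I'$ is a good rate function.

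For part (2), continuity of $f$ ensures that $f^{-1}(F)$ is closed whenever $F \subseteq \mathcal{Y}$ is closed and $f^{-1}(U)$ is open whenever $U \subseteq \mathcal{Y}$ is open. Applying the LDP for $\mathbb{P}$ on $\mathcal{X}$ to these preimages, together with the trivial identity $(\mathbb{P}\circ f^{-1})(A) = \mathbb{P}(f^{-1}(A))$, yields upper and lower bounds governed by $\inf_{x \in f^{-1}(F)} I(x)$ and $\inf_{x \in f^{-1}(U)} I(x)$, respectively. The chain identity
\begin{equation*}
\inf_{x \in f^{-1}(A)} I(x) \;=\; \inf_{y \in A}\,\inf_{x \in f^{-1}(y)} I(x) \;=\; \inf_{y \in A} I'(y)
\end{equation*}
then converts these bounds into precisely the LDP statements for the pushforward family $\mathbb{P} \circ f^{-1}$ on $\mathcal{Y}$ with rate function $I'$.

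The main obstacle is verifying goodness of $I'$: without compactness of the sublevel sets of $I$, the minimizing sequence argument collapses because $f(\{I \leq \alpha\})$ need not be closed, and the bound $I'(y) \leq \alpha$ would not guarantee an attaining preimage in $f^{-1}(y)$. The compactness hypothesis on sublevel sets of $I$ is exactly what is needed to transport compactness through the continuous map $f$; every other step, including the handling of closed and open sets in the LDP bounds, is a routine consequence of continuity and the definition of pushforward.
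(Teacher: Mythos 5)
The paper does not prove this theorem; it is imported verbatim as Theorem $4.2.1$ of \cite{dembo2009large}, so there is no ``paper's own proof'' to compare against. Your proposal reconstructs the standard textbook argument, and the structure is right: the key identity $\{y : I'(y) \leq \alpha\} = f(\{x : I(x) \leq \alpha\})$, compactness transported through the continuous $f$, closedness in the Hausdorff codomain, and for part (2) the observation that preimages preserve open/closed sets together with $\inf_{f^{-1}(A)} I = \inf_A I'$.

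There is one genuine technical gap in your part (1). You extract a convergent \emph{subsequence} from the minimizing sequence inside the compact sublevel set $\{I \leq \alpha + 1\}$. But the theorem is stated for general Hausdorff topological spaces, and in that generality compactness does not imply sequential compactness; a compact set may contain sequences with no convergent subsequence. The standard repair (and the argument actually used in Dembo--Zeitouni) avoids sequences entirely: for each $\epsilon > 0$ the set $f^{-1}(y) \cap \{I \leq \alpha + \epsilon\}$ is a non-empty closed subset of the compact set $\{I \leq \alpha + \epsilon\}$ (non-empty since $I'(y) \leq \alpha$ forces points of $f^{-1}(y)$ with $I$-value below $\alpha + \epsilon$; closed since $f$ is continuous and $\{y\}$ is closed in the Hausdorff space $\mathcal{Y}$). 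These sets form a decreasing family of non-empty compact sets as $\epsilon \downarrow 0$, so by the finite intersection property their intersection is non-empty, and any $x^\star$ in the intersection satisfies $f(x^\star) = y$ and $I(x^\star) \leq \alpha$. Alternatively, replace sequences with nets and your lower-semicontinuity step goes through unchanged. In the context where the present paper actually invokes the contraction principle --- graphon spaces under the cut metric, which are metrizable --- sequential compactness does coincide with compactness and your argument is correct as written, but as a proof of the stated theorem it needs the finite-intersection or net-based variant. Part (2) is correct and is exactly the standard argument.
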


\noindent A consequence of the above lemma is $\graphonspaceu$ is compact when $d$ is a vector of arities. Now, we prove LDP for random multi-relational hypergraphs with variable arities.
\begin{restatable}{theorem}{thmldp3}
\label{thm:LDP3}
Let $d$ be a $r$-dimensional vector of arities. 
For each fixed $p \in (0, 1)$, the sequence $\mathbb{P}_{n,p}$ obeys a large deviation principle in the space $\graphonspaceu$  with rate function hp. Explicitly, for any closed set $F \subseteq \graphonspaceu$
\begin{equation*}
    \limsup_{n \to \infty} \sum_k \frac{1}{(n)_{d_k} } \log \mathbb{P}^{(r,d)}_{k,n,p}(F) \leq -\inf_{W \in F} I^{(r,d)}_p( W) 
\end{equation*}
and for any open set $U \subseteq \graphonspaceu$ 
\begin{equation*}
    \liminf_{n \to \infty} \sum_k \frac{1}{ (n)_{d_k}} \mathbb{P}^{(r,d)}_{k,n,p}(U)  \geq -\inf_{W \in U} I^{(r,d)}_p(W) 
\end{equation*}
where $\mathbb{P}^{(r,d)}_{k,n,p}$ is the probability distribution induced by $G_k^{(r,d_k)}(n,p)$.
\end{restatable}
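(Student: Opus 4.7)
The plan is to derive the result from the uniform-arity LDP of Theorem \ref{thm:LDP2} combined with the independence of the $r$ relations of $G^{(r,d)}(n,p)$ and the continuity of the projection map $h$ from Lemma \ref{lem:contractionmap}. By Definition \ref{def:randomhypergraph}, the relations $G_k^{(r,d_k)}(n,p)$ are sampled independently across $k$, so their graphon representations $W_k = f^{G_k}$ are mutually independent random elements of $\graphonspaceuu{1}{d_k}$. Each $W_k$ is a single-relation uniform Erd\H os--R\'enyi hypergraphon of arity $d_k$, hence by Theorem \ref{thm:LDP2} specialised to $r=1$ it satisfies a large deviation principle at speed $(n)_{d_k}$ with rate function $\frac{1}{d_k!}\int_{[0,1]^{d_k}} h_p(W_k(x))\,dx$.

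The target rate decomposes additively as $I^{(r,d)}_p(W) = \sum_{k=1}^r \frac{1}{d_k!}\int h_p(W_k)$, matching the structure induced by independent components. The key reduction is to establish the statement first for product-shaped events. If $F = F_1 \times \cdots \times F_r \subseteq \graphonspaceu$ with each $F_k$ closed in $\graphonspaceuu{1}{d_k}$, independence yields $\mathbb{P}_{n,p}^{(r,d)}(F) = \prod_k \mathbb{P}^{(r,d)}_{k,n,p}(F_k)$, whose logarithm is a sum that, after weighting each summand by $1/(n)_{d_k}$, becomes exactly the expression on the left of the theorem. Applying the coordinatewise LDP of Theorem \ref{thm:LDP1} to each factor yields
\begin{equation*}
\limsup_{n \to \infty}\sum_k \frac{1}{(n)_{d_k}} \log \mathbb{P}^{(r,d)}_{k,n,p}(F_k) \leq -\sum_k \inf_{W_k \in F_k} \frac{1}{d_k!}\int h_p(W_k),
\end{equation*}
and the right-hand side equals $-\inf_{W \in F} I^{(r,d)}_p(W)$ because both $F$ and the rate are product-separable. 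The lower bound for open products is analogous.

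For a general closed set $F \subseteq \graphonspaceu$, the passage requires a covering argument. Compactness of $\graphonspaceu$ is inherited from that of $\graphonspaceuu{r}{\max(d)}$ via the continuous surjection $h$ of Lemma \ref{lem:contractionmap}, which together with the local large deviation estimates of Lemma \ref{lem:auxldp} applied coordinatewise lets one cover $F$ by finitely many cut-distance balls, each of which is sandwiched between products of small component balls. Summing the coordinatewise local rates over the cover and taking the infimum over the cover centres recovers $-\inf_{W \in F} I^{(r,d)}_p(W)$. The open-set lower bound is obtained by placing a product cut-distance ball around any $W \in U$ and applying the local lower bound of Lemma \ref{lem:auxldp} in each coordinate.

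The main obstacle is the asymmetry of the speeds $(n)_{d_k}$ across $k$: since $(n)_{d_k} \sim n^{d_k}$, the expression $\sum_k \frac{1}{(n)_{d_k}} \log \mathbb{P}^{(r,d)}_{k,n,p}(F)$ does not correspond to a standard single-speed LDP, and for this reason the contraction principle \emph{alone}, applied to $h$, cannot produce it, as that principle only transports LDPs at a fixed speed. The resolution is precisely the independence of the components, which allows one to split the log-probability into independent summands before rescaling each by its own $1/(n)_{d_k}$ and passing to the limit. Once the product-set case is handled, the cover step is routine, using the compactness provided by Lemma \ref{lem:contractionmap} and the Szemer\'edi-type approximation inherent in Theorem \ref{thm:Szemeredi2}.
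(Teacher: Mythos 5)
Your proposal and the paper diverge right at the start, and your instinct about why is correct. The paper's proof of Theorem~\ref{thm:LDP3} is a one-liner: it invokes the contraction principle (Theorem~\ref{thm:contractionprinciple}) together with the continuity of the projection $h$ from Lemma~\ref{lem:contractionmap}. You explicitly argue that the contraction principle \emph{cannot} deliver the claimed statement because it only transports an LDP at a fixed speed, whereas the target inequality weights the $k$-th log-probability by $1/(n)_{d_k}$, which are of different orders across $k$. This is a genuine and important observation: pushing the rate $\tfrac{r}{(n)_{\max(d)}}$ through $h$ would yield a single-speed bound, not the sum $\sum_k (n)_{d_k}^{-1}\log\mathbb{P}^{(r,d)}_{k,n,p}(\cdot)$. (There is a further difficulty with the paper's route that you did not flag: the contraction-principle rate $I'(W)=\inf\{I^{(r,\max(d))}(V):h(V)=W\}$ degenerates, because $h$ restricts to a zero-measure diagonal in the higher-arity cube; setting $V_k\equiv p$ off that diagonal forces $I'\equiv 0$. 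Your instinct to abandon the contraction route was therefore doubly justified.)

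Your replacement strategy --- exploit the independence of the $r$ relations to factor $\log\mathbb{P}$ and treat each coordinate at its own speed, then pass from product events to general ones by a covering argument --- is the natural way to make sense of a multi-speed statement. However, as written there is a gap in the covering step. The unlabeled space $\graphonspaceu$ is the quotient of $\graphonspacel=\prod_k\mathcal{W}^{(1,d_k)}$ by the \emph{diagonal} action of $\Sigma$: a single measure-preserving $\sigma$ must relabel all coordinates simultaneously, and accordingly $\delta_\Box(W,V)=\inf_\sigma\sum_k\|W_k-V_k^\sigma\|_\Box$. Consequently $\graphonspaceu$ is \emph{not} $\prod_k\graphonspaceuu{1}{d_k}$, and a $\delta_\Box$-ball in $\graphonspaceu$ is not sandwiched between products of small component balls: $\delta_\Box(W_k,V_k)<\delta$ for every $k$ (with possibly different optimal $\sigma_k$) does not give a small $\delta_\Box(W,V)$, since no single $\sigma$ need align all coordinates at once. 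The product factorisation of the probability does work at the \emph{labeled} level, where the coordinates are genuinely independent random elements, but your sandwiching of open/closed sets in the quotient by products of single-coordinate balls does not hold, so the passage from product events to arbitrary closed $F$ and open $U$ in $\graphonspaceu$ is unjustified. To repair this you would need to work in the labeled space and control the pushforward to the diagonal quotient directly (as the Chatterjee--Varadhan/Lubetzky--Zhao proofs do via regularity and compactness), rather than covering by product cylinders.
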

\begin{proof}
The proof is direct from Theorem \ref{thm:contractionprinciple} and Lemma  \ref{lem:contractionmap}.   
\end{proof}

\begin{definition}[Constrained region defined by quantum graphs]
Let $\mathcal{F}$ be an ordered set  of quantum $(r,d)$-graphs  $\mathcal{F} = [F_1, \cdots,F_k]$  and let $u \in \mathbb{R}^k$  be a vector of multi-relational subgraph densities. Then we define the constrained region defined by $(\mathcal{F},u)$
\begin{equation}
  \label{eq:feasibleregion}
  \feasibleregion=\{ W \in \graphonspaceu \mid \wedge_i t(F_i,W)=u_i\{.    
\end{equation}

\end{definition}
\noindent Hence, we prove the most typical random $(r,d)$-hyper-graphs of $\feasibleregion$.

\begin{theorem} 
\label{thm:mosttypicalrandommultigraph}
  Let $\mathcal{F}$ be an ordered set of  quantum $(r,d)$-graphs  $\mathcal{F} = [F_1, \cdots,F_k]$  and let $u \in \mathbb{R}^k$  be a vector of multi-relational subgraph densities. Then the limits $W^*(\mathcal{F},u)$ of the sequence $(G^{(r,d)}(n,1/2))_{n=1}^\infty$ of growing random $(r,d)$-hypergraphs which are uniformly sampled such that $\lim_{n \rightarrow \infty} G^{[r]}(n,1/2) \in   \feasibleregion $ satisfy 
\begin{equation}
W^*(\mathcal{F},u)   = \argmin_{W \in \feasibleregion} I_{1/2}^{(r,d)}(W) 
\end{equation}
\end{theorem}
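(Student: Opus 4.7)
The plan is to combine the Large Deviation Principle of Theorem \ref{thm:LDP3} with a conditioning argument. A preliminary observation is that sampling $G^{(r,d)}(n,1/2)$ places equal mass $2^{-\sum_k \binom{n}{d_k}}$ on every labeled $(r,d)$-hypergraph on $[n]$, so conditioning $\mathbb{P}_{n,1/2}^{(r,d)}$ on any event yields the uniform distribution on hypergraphs realising that event. Hence the ``uniformly sampled'' limits of the statement coincide with the conditional Erd\H{o}s--R\'enyi limits at $p = 1/2$, and the rate function $I^{(r,d)}_{1/2}$ is the correct object to minimize.

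First I would verify that the feasible region $\feasibleregion$ is closed in $(\graphonspaceu, \delta_\Box)$. By the Counting Lemma, each $W \mapsto t(F,W)$ is continuous on linear $(r,d)$-hypergraphs, and by linearity this extends to every quantum graph $F_i \in \mathcal{F}$. Therefore $\feasibleregion = \bigcap_i t(F_i, \cdot)^{-1}(\{u_i\})$ is a finite intersection of closed sets. Since $\graphonspaceu$ is compact by Theorem \ref{thm:compactness} and $I^{(r,d)}_{1/2}$ is a good rate function, the minimum $m^* := \inf_{W \in \feasibleregion} I^{(r,d)}_{1/2}(W)$ is attained on a nonempty compact set $K^*$.

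Next, for $\varepsilon > 0$ I would set $U_\varepsilon := \{W \in \graphonspaceu : \max_i |t(F_i,W) - u_i| < \varepsilon\}$, an open thickening of $\feasibleregion$ by continuity. Given any open neighborhood $V$ of $K^*$, the closed set $\overline{U_\varepsilon} \setminus V$ has $\inf I^{(r,d)}_{1/2}$ strictly greater than $m^*$ for all sufficiently small $\varepsilon$: otherwise compactness of $\graphonspaceu$ and lower semicontinuity of $I^{(r,d)}_{1/2}$ would furnish a limit point in $\feasibleregion \setminus V$ achieving $m^*$, contradicting the definition of $K^*$. Applying the LDP upper bound of Theorem \ref{thm:LDP3} on $\overline{U_\varepsilon} \setminus V$ and the lower bound on $U_\varepsilon$ then yields, for some $\eta > 0$ depending on $\varepsilon$ and $V$,
\begin{equation*}
    \frac{\mathbb{P}_{n,1/2}^{(r,d)}(\overline{U_\varepsilon} \setminus V)}{\mathbb{P}_{n,1/2}^{(r,d)}(U_\varepsilon)} \longrightarrow 0 \quad \text{as } n \to \infty.
\end{equation*}
Hence the conditional law $\mathbb{P}_{n,1/2}^{(r,d)}(\cdot \mid U_\varepsilon)$ concentrates on $V$, and letting $\varepsilon \to 0$ concludes the argument.

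The main obstacle is making the conditioning precise: the exact event $\{t(F_i,\cdot) = u_i\}_i$ has vanishing probability in the limit, so one must condition on the thickenings $U_\varepsilon$ and interchange $\varepsilon \to 0$ with $n \to \infty$. The interchange is justified by lower semicontinuity of $I^{(r,d)}_{1/2}$ combined with compactness of $\graphonspaceu$, which ensure that minimizers over shrinking neighborhoods of $\feasibleregion$ accumulate on minimizers over $\feasibleregion$ itself. A secondary subtlety is dealing with the peculiar multi-coordinate normalization $\sum_k (n)_{d_k}^{-1}\log \mathbb{P}_{k,n,p}^{(r,d)}$ in Theorem \ref{thm:LDP3}: since the coordinates of $G^{(r,d)}(n,1/2)$ are independent, the joint LDP factorizes and the conditioning argument above applies on each $k$-slice uniformly.
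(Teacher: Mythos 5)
Your sketch follows the conditional large-deviation argument of Chatterjee--Varadhan (Theorem 3.1 of \cite{chatterjee2011large}), which is exactly what the paper invokes; the paper gives no independent derivation and simply asserts the proof is ``identical to'' that theorem, so your reconstruction matches the paper's route. The only point worth flagging is that the last remark about factorizing the LDP ``on each $k$-slice'' is unnecessary (and slightly misleading, since the constraints $t(F_i,\cdot)=u_i$ can couple coordinates of different arities): once Theorem~\ref{thm:LDP3} is taken as a joint LDP on $\graphonspaceu$, the thickening--conditioning argument you give applies directly to the joint law, so no slice-wise treatment is needed.
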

\begin{proof}
 The proof is identical to Theorem $3.1$ in \cite{chatterjee2011large}.   
\end{proof}
\noindent Hereafter we write $I_{1/2}^{(r,d)}$ as $I$ and $h_{1/2}$ as $I_0$.

\section{Modeling Possible Worlds in Probabilistic Logic}
\label{sec:possibleworlds}

Let $L=(\forall, \wedge, \Rightarrow, R_1, \cdots, R_r)$ be a signature of a first-order language with only symmetric and relational symbols. If we have an ordered set of logical closed formulas $\mathcal{F}$ from $L$ and vector $u \in (0,1)^{|\mathcal{F}|}$ then we can form a probabilistic relational theory $(\mathcal{F}),u$ by attaching each logical formula $\mathcal{F}_i$  with $u_i$. 

Let $L=(\forall, \wedge, \Rightarrow, Friends; Sm)$ be a relational signature with the predicates Friends and Smoker(Sm). From this signature, we build the following probabilistic theory   
\begin{eqnarray*}
 &&   u_1: \forall_x Sm(x) \\
 &&   u_2: \forall_{x,y} Friends(x,y) \\
 &&   u_3: \forall_{x,y,z} Friends(x,y) \wedge Friends(y,z) \wedge Friends(z,x) \\
 &&   u_4: \forall_{x,y} Friends(x,y) \Rightarrow (Sm(x) \Leftrightarrow Sm(y))
\end{eqnarray*}
From this theory, we can make probabilistic logical inferences. For example
What is the probability of the following formula
\begin{eqnarray*}
     \forall_{x,y,z}  Friends(x,y) \wedge Friends(y,z) \wedge Friends(z,x) \\\Rightarrow (Sm(x)) \wedge Sm(y) \wedge Sm(z))    
\end{eqnarray*}
is true?

From Theorem \ref{thm:mosttypicalrandommultigraph}, The most typical random $(2,[2,1])$-hyper-graphs $\optimalsolution$ are the solution of the following optimization problem.
\begin{equation}
\label{eq:wexample}
    \min_{[Friends, Sm] \in \widetilde{\mathcal{W}}^{(r,d)}} \frac{1}{2} \int_{[0,1]^2} I_{0}(Friends(x,y)) dxdy  + \int_{[0,1]}  I_0(Sm(x)) dx
\end{equation}
subjected to 
\begin{eqnarray*}
&&    u_1 = \int_{[0,1]} Sm(x) dx \\
&&    u_2 = \int_{[0,1]^2} Friends(x,y) dx dy \\
&&    u_3 = \int_{[0,1]^3} Friends(x,y)Friends(y,z) Friends(z,x)  dx dy dz \\
\end{eqnarray*}
For the fourth constraint, we transform the formula to the equivalent one 
\begin{equation}
\label{eq:Pexample}
 \forall_{x,y} Friends(x,y) \Rightarrow ( (Sm(x) \Rightarrow S(y)) \wedge (Sm(y) \Rightarrow S(x))     
\end{equation}
and using the formula $p \Rightarrow q \equiv 1 - (p \wedge \neg q$)
\begin{eqnarray*}
    && 1-u_4 = \int_{[0,1]^2} Friends(x,y) \\ && ( 1- (1-Sm(x)(1-Sm(y))) (1- Sm(y)(1-Sm(x)))  dxdy.
\end{eqnarray*}
The last constraint can be reduced to a constraint of a quantum graph.

Let $W^*(\mathcal{F},u)$ be the set of solutions of (\ref{eq:wexample}). Then, the probability of (\ref{eq:Pexample}) is computed by 
\begin{multline*}
    1 - \frac{1}{|W^*(\mathcal{F},u)|} \sum_{[Friend,Sz] \in W^*(\mathcal{F},u)} \int_{[0,1]^3} Friend(x,y)Friend(y,z)Friend(z,x) \\  (1-Sz(x))(1-Sz(y))(1-Sz(z)) dxdydz.    
\end{multline*}

\section{Computability of $\optimalsolution$}
\label{sec:computability}

$\optimalsolution$ is defined as the solution of an optimization problem in an infinity dimensional space. Hence, it is not clear whether $\optimalsolution$ is computable. Here we show $\optimalsolution$  is computable in almost all cases when $(\mathcal{F},u)$  is feasible.


\subsection{Marginal Polytope}

\begin{definition}[Marginal map]
Let $\mathcal{F}$ be an ordered vector of quantum graphs. Then the marginal map $t(\mathcal{F},\cdot):\realgraphonspacel \to \mathbb{R}^{|\mathcal{F}|}$ is defined by 
\begin{equation*}
    t(\mathcal{F},W) = (t(\mathcal{F}_1,W), \cdots, t(\mathcal{F}_k,W)  ).
\end{equation*}
We denote by $J_x(\mathcal{F})$ the Jacobian of $t(\mathcal{F},\cdot)$.
\end{definition}

\begin{definition}[Marginal Polytope]
The marginal polytope $\marginalpolytopem{m}$ of  $\mathcal{F}$ is the image of $t(\mathcal{F},\cdot)$  on 
$\stepfunctionspacel{m}$, i.e., $\marginalpolytopem{m}= t(\mathcal{F},\stepfunctionspacel{m})$. 
We also define the total marginal polytopes of $T(\mathcal{F})=t(\mathcal{F},  \graphonspacel)$ and $T(\mathcal{F},\pi )=t(\mathcal{F},  \stepfunctionspacelpi{m}{\pi})$.  
\end{definition}

\begin{definition}[Constrained regions]
If $u \in T(\mathcal{F})$, the level sets of  $t(\mathcal{F}, u)$ define regions constrained by subgraph densities. Hence 
\begin{equation*}
    \realfeasibleregionl = t^{-1}(\mathcal{F},u) 
\end{equation*}
and 
\begin{equation*}
\realfeasibleregionml{m} = \realfeasibleregionl \cap \realstepfunctionspacel{m} 
\end{equation*}
and if we fix $\pi \in \convexcombm{m}$
\begin{equation*}
\realfeasibleregionmlpi{m}{\pi} =  \realfeasibleregionml{m} \cap \realstepfunctionspacelpi{m}{\pi}  
\end{equation*}    
\end{definition}

\subsubsection{Non-empty interior of $T^{(m)}(\mathcal{F})$}
To guarantee that $\realfeasibleregionml{m}$ are analytical manifolds for almost all $u \in T^{(m)}(\mathcal{F})$ by applying Theorem \ref{thm:sardstheorem} (Sard's Theorem), we need a sufficient condition for which $T^{(m)}(\mathcal{F})$ has a non-empty interior. Hence, it is sufficient that $\mathcal{F}$ is a set of independent quantum graphs.  
\begin{definition}[Independent graphs]
Let $\mathcal{F}$ be an ordered and finite set of quantum graphs.  We say that  $\mathcal{F}$ is a set of independent graphs if for all $m$ such that  $n(m,r,d)  \geq |\mathcal{F}|$  there is   $(\sfmat,\pi) \in \realstepfunctionspacel{m} $ such that $\pi \in (\convexcombm{m})^\circ$ and  $J_{(\sfmat,\pi)}(\mathcal{F})$ is full rank\footnote{This criterion is equivalent to say the polynomials $t(\mathcal{F}_i, X)$ $i \in [|\mathcal{F}|]$ are algebraically independent.}.
\end{definition}
The following lemma shows that $T^{(m)}(\mathcal{F})$ and $T^{(m)}(\mathcal{F},\pi)$ have a non-empty interior when $\mathcal{F}$ is a set of independent graphs. 
\begin{restatable}{lemma}{lemopenmarginal}
\label{lem:openmarginal}
The following statements are equivalent.
\begin{enumerate}
\item  $\mathcal{F}$ is a set of independent  graphs and  $n(m,r,d) \geq |\mathcal{F}| $.
\item $T^{(m)}(\mathcal{F}, \pi )$ has a non-empty interior for all $\pi \in (\convexcombm{m})^\circ$. 
\item $t(\mathcal{F},U)$ has a non-empty interior for any non-empty open set $U \subset \stepfunctionspacel{m} $.
\end{enumerate}
\end{restatable}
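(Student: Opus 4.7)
The plan is to establish the cyclic implications $(1) \Rightarrow (3) \Rightarrow (2) \Rightarrow (1)$, leveraging three main tools: the polynomial (hence real-analytic) nature of $t(\mathcal{F}, \cdot)$, the product structure of its partial derivatives given by Theorem \ref{thm:partialderivatives}, and the open-mapping/Sard machinery of Section \ref{sec:mathematicalbacground}.

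For $(1) \Rightarrow (3)$, condition $(1)$ supplies a point $(A^*, \pi^*)$ with $\pi^* \in (\convexcombm{m})^\circ$ at which $J_{(A^*, \pi^*)}(\mathcal{F})$ has rank $|\mathcal{F}|$. Since the $|\mathcal{F}| \times |\mathcal{F}|$ minors of $J_{(A,\pi)}(\mathcal{F})$ are polynomials in $(A, \pi)$ and at least one is nonzero at $(A^*, \pi^*)$, the set of regular points of $t(\mathcal{F}, \cdot)$ is Zariski-open and dense in $\stepfunctionspacel{m}$. Any non-empty open $U$ therefore contains a regular point $(A', \pi')$; the inverse function theorem applied to the submersion $t(\mathcal{F}, \cdot)$ at $(A', \pi')$ places an open neighborhood of $t(\mathcal{F}, (A', \pi'))$ inside $t(\mathcal{F}, U)$.

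For $(2) \Rightarrow (1)$, pick any $\pi^* \in (\convexcombm{m})^\circ$. By $(2)$ and Sard's theorem applied to the polynomial map $A \mapsto t(\mathcal{F}, (A, \pi^*))$, there exists $A^*$ with $J_A(A^*, \pi^*)$ of rank $|\mathcal{F}|$; since $J_A$ is a sub-matrix of $J_{(A,\pi)}$, the full Jacobian at $(A^*, \pi^*)$ also has rank $|\mathcal{F}|$. To propagate independence to every $m'$ with $n(m', r, d) \geq |\mathcal{F}|$ I use the split map $\theta$ of Definition \ref{def:splitmap}: for $m' > m$, iteratively splitting $(A^*, \pi^*)$ produces an equivalent step function in $\stepfunctionspacel{m'}$; the chain rule applied through the split injection shows the extended Jacobian retains rank $|\mathcal{F}|$. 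For $m' < m$, one chooses a $\pi' \in (\convexcombm{m'})^\circ$ that is a coarser regular value and applies the same density argument in the opposite direction.

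The central difficulty is $(3) \Rightarrow (2)$, where generic full-rank of the full $(A, \pi)$-Jacobian must be transferred to full-rank of the $A$-Jacobian alone at \emph{every} interior $\pi^*$ — not merely generic $\pi$. The crux is the factorization obtained from Theorem \ref{thm:partialderivatives}: we write $J_A(A, \pi) = \tilde{J}(A, \pi) D(\pi)$, where $D(\pi)$ is diagonal with entries given by products of $\pi_i$'s and therefore invertible on $(\convexcombm{m})^\circ$, so that $\mathrm{rank}\,J_A(A, \pi) = \mathrm{rank}\,\tilde{J}(A, \pi)$ on the interior. Under $(3)$ the regular locus of $t(\mathcal{F}, \cdot)$ is Zariski-dense (by the density argument of $(1) \Rightarrow (3)$ combined with Sard), so maximal rank of the polynomial matrix $\tilde J$ is attained on a Zariski-open set. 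The final step refines $\pi^*$ via $\theta$ to a sufficiently fine partition within which the Zariski-openness of maximal rank guarantees a regular point lying over (a refinement of) $\pi^*$; because splits leave $t(\mathcal{F}, \cdot)$ invariant and correspond to affine inclusions of parameter spaces, one descends back to find $A$ with $\tilde J(A, \pi^*)$ of rank $|\mathcal{F}|$. A last application of Sard to the slice $\{\pi = \pi^*\}$ then yields $(2)$. The obstacle is precisely that a positive-codimension fiber need not inherit generic full-rank from the ambient parameter space; the diagonal factorization together with split-map invariance is what makes the restriction go through in the specific case of subgraph-density polynomials.
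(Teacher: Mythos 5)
The skeleton of your proposal — cyclic implications among the three statements, driven by Sard's theorem and Zariski-density of the regular locus — is close to what the paper does, except that you reverse the cycle: the paper goes $(1)\Rightarrow(2)\Rightarrow(3)\Rightarrow(1)$, so its hard step is $(1)\Rightarrow(2)$, while you go $(1)\Rightarrow(3)\Rightarrow(2)\Rightarrow(1)$, shifting the hard step to $(3)\Rightarrow(2)$. You correctly identify that this hard step is precisely the transfer from ``full rank at \emph{some} interior $\pi$'' to ``full rank at \emph{every} interior $\pi^*$.'' Your $(1)\Rightarrow(3)$ and $(2)\Rightarrow(1)$ arguments are sound and essentially match pieces of the paper's proof, and the split-map argument you add to propagate independence to all $m'>m$ is a reasonable fix for a point the paper leaves implicit.

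However, your $(3)\Rightarrow(2)$ argument has a genuine gap. The factorization $J_A(A,\pi)=\tilde{J}(A,\pi)D(\pi)$ with $D(\pi)$ diagonal and positive on $(\convexcombm{m})^\circ$ tells you only that $\operatorname{rank}J_A=\operatorname{rank}\tilde{J}$ on the interior; it does not remove the $\pi$-dependence of $\tilde{J}$, so it does not let you transport rank between slices $\{\pi=\pi^*\}$. The real work is then deferred to the claim that, after refining $\pi^*$ to some $\pi''$ via $\theta$, ``Zariski-openness of maximal rank guarantees a regular point lying over $\pi''$.'' This does not follow: Zariski-density of the regular locus in the ambient $(A,\pi)$-space does \emph{not} imply that any fixed positive-codimension slice $\{\pi=\pi''\}$ meets it — the critical locus is a proper subvariety and nothing you have invoked prevents it from containing an entire $\pi$-slice, which is exactly the possibility the lemma must exclude. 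Worse, even granting a regular point $(A'',\pi'')$ in the refined parameter space, your ``descent back'' to a regular point over $\pi^*$ is invalid: such a point is in general not in the image of the split map $\theta$, so it gives no step function with partition vector $\pi^*$. The paper closes this gap in its $(1)\Rightarrow(2)$ step by a different mechanism: it studies the determinant $Q(z,y)$ of the Gramian $J^\top J$ as a polynomial in the $A$-variables $z$ with coefficients in the $\pi$-variables $y$, uses the multiplicative structure of the partial derivatives from Theorem~\ref{thm:partialderivatives} to argue that these coefficients are monomials in $y$ (hence nonvanishing and of constant sign on $(\convexcombm{m})^\circ$), and concludes $Q(z,\pi)\not\equiv 0$ from $Q(z,\pi')\not\equiv 0$. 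Some such positivity-and-homogeneity argument, tied to the concrete algebraic form of $J_A$, is what your $(3)\Rightarrow(2)$ step is missing; pure Zariski-density and split-map bookkeeping cannot supply it.
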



\begin{definition}[Density function]
\label{def;densitiy}
Let $f_0:\mathbb{R} \to \mathbb{R}$ be an analytic function. Then $f_s:\realgraphonspacel \to \mathbb{R}$  is defined by 
\begin{equation*}
f_s(W)= \sum_{k=1}^r \int_{[0,1]^{d_k}} f_0(W_k(x_1,\cdots, x_{d_k})) dx_1  \cdots dx_{d_k} 
\end{equation*} 
\end{definition}
\noindent On $\realstepfunctionspacel{m}$   the density function $f_s$ has the form
\begin{equation}
\label{eq:generalratefunctionstep}
f_s((\sfmat,\pi))= \sum_{k=1}^r \sum_{x_1 \cdots x_k=1}^m f_0(\sfmat_{x_1, \cdots x_{d_k},k}) \pi_{x_1} \cdots.\pi_{x_{d_k}} 
\end{equation} 
 
\noindent Hence, we need to prove the solutions  
\begin{equation*}
    \optimalsolution = \argmin_{W \in \feasibleregion} I(W)
\end{equation*}
are step functions. To prove that, we prove the solutions 
\begin{equation*}
     \optimalsolutionf{f_s} = \argmin_{W \in \realfeasibleregionl} f_s(W)
\end{equation*}
are step functions for any density function $f_s$.  Note that the solutions of the above optimization problems are labeled graphons. 
However, the orbits of these solutions are unlabeled graphons since the subgraph density constraints and the density functions are invariant under the measure-preserving map. The statement that $\realoptimalsolution{f_s} $ are step functions here is called the Principle of Optimization of Density functions $(POD)$. The computability of $\optimalsolution$ is a special case of this principle.

\subsection{The principle of optimization of density functions}
This principle is defined in the following theorem 

\begin{definition}
Let $\mathcal{F}$ be a set of quantum graphs and let $u \in T(\mathcal{F})$. Then, the base size of step functions is,   
\begin{equation*}
    m_0(\mathcal{F}, u ) = \min \{ m | n(m,r,d)  > |\mathcal{F}| \mbox{ and } u \in T(\mathcal{F}) \} 
\end{equation*}
\end{definition}

 \begin{restatable}{theorem}{thmPOD}
 \label{thm:POD}
 Let $\mathcal{F}$ be a set of independent graphs and let $m \geq m_0(\mathcal{F},u)$.
 For almost all cases $u \in \marginalpolytopem{m_0} \cap \Omega(\mathcal{F})$  the  minima of $f_s$ on $\realfeasibleregionl$ lies in $\realfeasibleregionml{m_0}$.
 \end{restatable}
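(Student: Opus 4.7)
The plan proceeds in three stages: first, restrict to a regular value $u$ via Sard's theorem so that $\realfeasibleregionml{m}$ is an analytic submanifold for every $m\geq m_0$ and the Karush--Kuhn--Tucker conditions from Section \ref{sec:optimization} apply; second, use the split map $\theta$ of Definition \ref{def:splitmap} to show inductively that any minimum of $f_s$ on $\realfeasibleregionml{m}$ lifts to a minimum on $\realfeasibleregionml{m+1}$, so the sequence of minima stabilizes at step-size $m_0$; and third, transfer the conclusion from $\bigcup_m \realfeasibleregionml{m}$ to $\realfeasibleregionl$ using Szemer\'edi's Regularity Lemma (Theorem \ref{thm:Szemeredi2}), the Counting Lemma, and the full-rank property of $J_{(\sfmat,\pi)}(\mathcal{F})$ from Lemma \ref{lem:openmarginal} to produce constraint-preserving perturbations. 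The measure-zero exceptional set excluded by ``almost all $u$'' is precisely the union of the Sard critical values of the restricted marginal maps and the locus where the generically-positive second-order condition degenerates.

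For the inductive step, let $(\sfmat^*,\pi^*)$ be a local minimum of $f_s$ on $\realfeasibleregionml{m}$, supplied with Lagrange multiplier $\beta^*$ by the First-Order Necessary Conditions of Section \ref{sec:optimization}. For any $\lambda\in(0,1)$ and $k\in[m]$ consider the lifted point $\theta((\sfmat^*,\pi^*),\lambda,k)\in\realstepfunctionspacel{m+1}$. The split map does not change the underlying hypergraphon as a function on $[0,1]^d$, so both $f_s$ (Definition \ref{def;densitiy}) and each $t(F_i,\cdot)$ take the same value at $(\sfmat^*,\pi^*)$ and at its lift; in particular, the lift lies in $\realfeasibleregionml{m+1}$. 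Expanding the partial derivatives at the lifted point using Theorem \ref{thm:partialderivatives} and exploiting the fact that the two refined partition classes share their entries of $\sfmat$ yields the KKT equations at the lift with the same multiplier $\beta^*$, so $\theta((\sfmat^*,\pi^*),\lambda,k)$ is a critical point of $f_s$ on $\realfeasibleregionml{m+1}$.

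The delicate part is the second-order sufficient condition. Decompose the tangent cone $C_{\theta(\cdot)}\realfeasibleregionml{m+1}$ as the direct sum of (a) directions realized by the pushforward of the diffeomorphism induced by $\theta(\cdot,\lambda,k)$, on which Lemma \ref{lem:diffeomorphismpreservation} transfers the positive definiteness of the geodesic Hessian from $(\sfmat^*,\pi^*)$; and (b) ``refinement'' directions that infinitesimally distinguish the two new partition classes from each other. The one-parameter variation of $\lambda$ lies inside (b) but merely reparameterizes the same step function, so it must be factored out as a gauge direction; the complementary transverse anti-symmetric refinement direction yields a quadratic form that is computable in closed form from Theorem \ref{thm:partialderivatives}. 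Outside a measure-zero analytic subset of $u\in\marginalpolytopem{m_0}\cap\Omega(\mathcal{F})$, this quadratic form is strictly positive and Theorem \ref{thm:sufficientsecondordercondition} certifies the lift as a minimum in $\realfeasibleregionml{m+1}$. Iterating yields the stabilization claim, and the final passage to $\realfeasibleregionl$ is by approximating any $W\in\realfeasibleregionl$ in cut norm by step functions $W_m$ via Theorem \ref{thm:Szemeredi2}, bounding the drift of $t(\mathcal{F},W_m)$ by the Counting Lemma, and using the implicit function theorem (available because $J(\mathcal{F})$ is full rank by Lemma \ref{lem:openmarginal}) to correct $W_m$ into $\realfeasibleregionml{m}$ with $f_s(W_m')\to f_s(W)$.

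The main obstacle is the second-order analysis above: the split map necessarily produces a multiparameter gauge orbit of equivalent representations, so the geodesic Hessian at any lifted point has a built-in nontrivial kernel, and one must carefully separate gauge directions from genuine refinement directions and prove positive semidefiniteness on the latter almost everywhere in $u$. Doing this uniformly across relations of different arities listed in $\mathcal{F}$, and identifying the exceptional set as a finite union of analytic varieties in $\marginalpolytopem{m_0}\cap\Omega(\mathcal{F})$, is where the bulk of the technical work lies.
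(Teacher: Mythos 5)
Your first two stages broadly match the paper's strategy (Sard regularity, lift-by-split), but there is a genuine gap in the inductive step, and a smaller one in the limiting step.

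\textbf{The inductive step is one-directional where the argument needs both directions.} You prove that a local minimum of $f_s$ on $\realfeasibleregionml{m}$ lifts under $\theta(\cdot,\lambda,k)$ to a local minimum on $\realfeasibleregionml{m+1}$, and then conclude that ``the sequence of minima stabilizes.'' That conclusion does not follow: lifting minima forward only says that the minimum value at level $m+1$ is \emph{at most} the minimum value at level $m$; it does not exclude a strictly lower minimum appearing at level $m+1$ that is not the split of anything at level $m$. In the paper this converse is precisely the content of Theorem \ref{thm:localminimumsplit}, which asserts the \emph{equality}
\begin{equation*}
\bigcup_{\lambda\in[0,1],\, k\in[m]} \theta\bigl(\realoptimalsolutionm{m}{f_s},\lambda,k\bigr) = \realoptimalsolutionm{m+1}{f_s},
\end{equation*}
and its proof requires two further cornerstones that your plan does not supply: Theorem \ref{thm:almostsmoothnessCr} (the split of a critical point does not \emph{bifurcate} into additional critical points, for almost all $u$) and Theorem \ref{thm:numbercomponents} (the feasible region does not acquire \emph{new connected components} as $m$ increases, so a new minimum cannot appear on a previously unreachable component). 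Your second-order/gauge-direction analysis, while pointing at a real phenomenon (the geodesic Hessian of a lifted critical point is only positive \emph{semi}definite because of the $\lambda$-gauge; the paper handles this by perturbing to $f_\epsilon$ in Lemma \ref{lem:splitminimum}), only establishes that the lift is still a minimum; it does not rule out the emergence of genuinely new minima, which is the part that makes the claim non-trivial.

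\textbf{The limiting step uses the wrong topology.} You propose approximating $W\in\realfeasibleregionl$ in cut norm via Szemer\'edi (Theorem \ref{thm:Szemeredi2}), controlling $t(\mathcal{F},\cdot)$ via the Counting Lemma, correcting with the implicit function theorem, and concluding $f_s(W_m')\to f_s(W)$. But $f_s$ is not continuous in the cut-norm topology; Lemma \ref{lem:continuity} establishes continuity of $f_s$ (and of $I$ away from the boundary) only in $L^{1}$. The paper therefore bypasses Szemer\'edi and the Counting Lemma here entirely and instead uses $L^1$-density of step functions together with $L^1$-continuity of $f_s$ to close the argument, with a separate contradiction to show the optimizer cannot be a non-step function. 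Your IFT-correction idea could perhaps be made to work, but as stated the convergence $f_s(W_m')\to f_s(W)$ is unjustified from cut-norm approximation alone.
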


\noindent The approach to prove POD is to approximate $\realoptimalsolution{f_s}$ by a sequence of solutions from the step function spaces $\realoptimalsolutionm{m}{f_s}$ for increasing $m$, defined by
\begin{equation*}
    \realoptimalsolutionm{m}{f_s}  = \argmin_{W \in \realfeasibleregionml{m}} f_s(W),
\end{equation*}
The above optimization is on finite-dimensional space, and the idea is to prove 
the split $\realoptimalsolutionm{m}{f_s}$  is in $ \realoptimalsolutionm{m+1}{f_s}$ and there is no new solution in 
 \begin{equation*}
 \realoptimalsolutionm{m+1}{f_s} \setminus \theta( \realoptimalsolutionm{m}{f_s} ,\lambda, k)    
\end{equation*}
for all $k \in [m]$. To prove Theorem  \ref{thm:POD}, we need develop results for  $\realfeasibleregionmlpi{m}{\pi}$ and $\realfeasibleregionml{m}$,

Figure \ref{fig:theoremdependencies} shows the theorem dependencies to prove the computability of $\optimalsolutionf{f_s}$. There are three cornerstones to prove POD. The first one is Theorem \ref{thm:numbercomponents}, which shows the number of connected components of $\realfeasibleregionml{m+1}$ is not higher than the number of connected components of $\realfeasibleregionml{m}$.  The second cornerstone is Theorem \ref{thm:splitminimum} that ensures the split of local minima of $f_s$ in $\realfeasibleregionml{m}$ is again a local minimum if $\realfeasibleregionml{m+1}$. The third cornerstone is Theorem \ref{thm:almostsmoothnessCr}, which states the split of critical points of $f_s$ on $\realfeasibleregionml{m}$ does not bifurcate into other critical points for almost all cases $u \in T(\mathcal{F})$.
With these three theorems, we prove in Theorem \ref{thm:localminimumsplit} that any local minimum of $f_s$ in $\realfeasibleregionml{m+1}$  is the split of a local minimum of $f_s$ in $\realfeasibleregionml{m}$. Theorem \ref{thm:POD} proves POD by the closure of Theorem \ref{thm:localminimumsplit}. Lemma \ref{lem:randomsolutions}   shows that if $u$ is not an extremal value, then $\optimalsolutionmf{m}{I}$ has values only in $(0,1)$.  Finally,  Theorem \ref{thm:radin} proves the computability of $\optimalsolution$.

\begin{figure}
    \centering
\tikzstyle{block} = [draw, rectangle, 
    minimum height=3em, minimum width=6em]
\tikzstyle{sum} = [draw, circle, node distance=1cm]
\tikzstyle{input} = [coordinate]
\tikzstyle{output} = [coordinate]
\tikzstyle{pinstyle} = [pin edge={to-,thin,black}]

\begin{tikzpicture}[auto, node distance=2cm]


\node [block] (connected) at ( 0,4) {$\begin{array}{c}
     \mbox{No new connected component}   \\
     \mbox{Theorem } \ref{thm:numbercomponents}
\end{array}$};

\node [block] (nobifurcation) at ( 0,2) {$\begin{array}{c}
     \mbox{No bifurcation of critical points}   \\
     \mbox{Theorem } \ref{thm:almostsmoothnessCr}
\end{array}$};

\node [block, below of=nobifurcation] (localminima) {$\begin{array}{c}
     \mbox{Split of local minima}   \\
     \mbox{Theorem } \ref{thm:splitminimum}
\end{array}$};

\node [block] (inductionsplit) at ( 6,2) {$\begin{array}{c}
     \mbox{Inductive local minima}  \\
     \mbox{Theorem } \ref{thm:localminimumsplit} 
\end{array}$};

\node [block] (pod) at ( 6, 0) {$\begin{array}{c}
     \mbox{POD}  \\
     \mbox{Theorem } \ref{thm:POD} 
\end{array}$};

\node [block] (nonextremal) at ( 0,-2) {$\begin{array}{c}
     \mbox{Non Extremal Statistics  }   \\
     \mbox{ Lemma }\ref{lem:randomsolutions}  
\end{array}$};

\node [block] (radin) at ( 6,-2) {$\begin{array}{c}
     \mbox{Computability of $\optimalsolution$ proof}   \\
     \mbox{Theorem } \ref{thm:radin}
\end{array}$};

 \draw [->] (connected) --  (inductionsplit);
\draw [->] (localminima) --  (inductionsplit);
\draw [->] (nobifurcation) --  (inductionsplit);
\draw [->] (inductionsplit) --  (pod);
\draw [->] (pod) --  (radin);
\draw [->] (nonextremal) --  (radin);

\end{tikzpicture}

    \caption{Theorem dependencies to prove the computability of $\optimalsolution$}
    \label{fig:theoremdependencies}
\end{figure}

\subsection{Smoothness of $\realfeasibleregionml{m}$}
\label{sec:smoothness}
Here, we develop sufficient conditions to ensure $\realfeasibleregionml{m}$ is an analytical manifold. 
\begin{definition}
\label{def:criticalvalues}
Let $F:M \to N$ be a smooth function.  A point $c \in N$ is said to be a regular value of $F$ if every point of the level set $F^{-1}(c)$ is a regular point; otherwise, it is a critical value. 
\end{definition}

\begin{definition}[Regular and Critical Values of $t(\mathcal{F}, \cdot)$]
\label{def:regularvalue}
 Let $Cri^{(m)}(\mathcal{F})$ be the set of critical values of the map  $t(\mathcal{F},\cdot): \realstepfunctionspacel{m}\to \mathbb{R}^{|\mathcal{F}|}$. Then, the set of regular values is defined by 
\begin{equation}
\label{eq:regularvalues}
\reg{\mathcal{F}} = \cup_{m >0} \marginalpolytopem{m} \setminus \cup_{m >0}  Cri^{(m)}(\mathcal{F})    
\end{equation}
and 
\begin{equation*}
 \regstatsm{m}{\mathcal{F}}= \marginalpolytopem{m} \cap \reg{\mathcal{F}}.
\end{equation*}
It is convenient to define the set of non-extremal values of $\mathcal{F}$ defined by  
\begin{equation*}
 \Omega(\mathcal{F})= \reg{\mathcal{F}} \setminus\cup_{m>0} \partial T^{(m)}(\mathcal{F})   
\end{equation*}

\end{definition}

\noindent Note that $\cup_{m >0}  Cri^{(m)}(\mathcal{F})$ has zero measure since it is the countable union of zero measure sets. Thus if $T^{(m)}(\mathcal{F})$ has a non-empty interior then  $Reg^{(m)}(\mathcal{F})$ and $Reg(\mathcal{F})$ are dense in $T^{(m)}(\mathcal{F})$ and $T(\mathcal{F})$. If $u \in Reg(\mathcal{F})$ then $\realfeasibleregionml{m}  \subset  t^{-1}(\mathcal{F},u) \subset \realstepfunctionspacel{m}$ has only regular points for hence $\realfeasibleregionml{m}$ is an analytical manifold  of $n(m,,r,d)+m-1 - |\mathcal{F}|$ dimensions since the marginal map is polynomial. 

To ensure that  $u \in T^{(m)}(\mathcal{F}) $ is a regular value for almost all cases, we apply Sard's theorem in \cite{lee2003smooth}
\begin{theorem}[Theorem $6.10$ in \cite{lee2003smooth} (Sard's Theorem)]
\label{thm:sardstheorem}
Suppose $M$ and N are smooth manifolds with or without boundary, and $F: M \to N$ is a smooth map. Then, the set of critical values of $F$ has zero measure in $N$.
\end{theorem}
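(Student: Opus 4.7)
\bigskip
\noindent\textbf{Proof proposal.} The statement is the classical theorem of Sard, so the plan is to follow the standard Morse–Sard argument. First I would reduce to the Euclidean setting: since a smooth manifold admits a countable atlas and a countable subset of a measure-zero set in each chart is still of measure zero in $N$, it suffices to prove that if $F\colon U \to \mathbb{R}^{m}$ is smooth on an open set $U\subseteq \mathbb{R}^{n}$, then $F(\mathrm{Crit}(F))$ has Lebesgue measure zero in $\mathbb{R}^{m}$, where $\mathrm{Crit}(F)$ denotes the set of points where $dF$ does not have full rank.

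Next, I would proceed by induction on $n$, the dimension of the domain. The case $n=0$ is trivial, and the case $n<m$ can be handled directly by covering $U$ by cubes and using the Lipschitz estimate $\lvert F(x)-F(y)\rvert \le C\lvert x-y\rvert$ on each compact piece, which shows that the image of any bounded set has measure zero in $\mathbb{R}^{m}$. For the inductive step I would stratify the critical set as
\begin{equation*}
\mathrm{Crit}(F) \;=\; C_1 \supseteq C_2 \supseteq C_3 \supseteq \cdots ,
\end{equation*}
where $C_k$ is the set of points at which all partial derivatives of $F$ of order $\le k$ vanish, and then bound the image of each stratum separately.

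The argument then has three pieces. First, $F(C_1\setminus C_2)$ is shown to have measure zero by picking, near any point $p\in C_1\setminus C_2$, a coordinate in which some higher partial derivative does not vanish, applying the implicit function theorem to straighten out a hypersurface on which the relevant derivative vanishes, and invoking the inductive hypothesis in dimension $n-1$ combined with Fubini's theorem. Second, the same Fubini/implicit function argument propagates up the filtration to show $F(C_k\setminus C_{k+1})$ has measure zero for every $k\ge 1$. Third, for $k$ sufficiently large (explicitly, $k > n/m - 1$), a covering argument based on Taylor's theorem shows that on a cube of side $\delta$ intersected with $C_k$, the image of $F$ is contained in a box of volume $O(\delta^{m(k+1)})$; summing over $O(\delta^{-n})$ such cubes gives total measure $O(\delta^{m(k+1)-n})\to 0$, so $F(C_k)$ has measure zero.

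The main obstacle, and the step that genuinely uses smoothness, is the first piece: handling $F(C_1\setminus C_2)$, because here one must use a change of variables (the implicit function theorem) and reduce to a map in one fewer dimension without losing measure-zero information. Once that is done, the remaining strata are handled by essentially the same idea applied inductively, and the top stratum is a direct Taylor-expansion covering estimate. Finally, I would lift the conclusion back to manifolds: countably many coordinate charts cover $M$, each chart contributes a measure-zero set of critical values, and a countable union of measure-zero sets in $N$ is measure zero, which is exactly the statement.
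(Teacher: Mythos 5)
The paper itself does not prove this statement; it simply cites Theorem $6.10$ of Lee's book, so there is no in-paper argument to compare against. Your sketch is the standard Morse--Sard argument (essentially Milnor's proof in \emph{Topology from the Differentiable Viewpoint}), and its overall architecture --- reduce to Euclidean charts, induct on the domain dimension, stratify the critical set by order of vanishing, handle the top stratum via a Taylor-expansion covering estimate --- is the right one.

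There is, however, a real gap in the stratification as you have written it. You set $\mathrm{Crit}(F) = C_1 \supseteq C_2 \supseteq \cdots$ with $C_k$ defined as the set where all partial derivatives of order $\le k$ vanish. But for a map into $\mathbb{R}^m$ with $m \ge 2$, the critical set $\mathrm{Crit}(F) = \{ x : \operatorname{rank} dF_x < m\}$ strictly contains $C_1 = \{x : dF_x = 0\}$: a Jacobian can fail to be surjective without being the zero matrix. The standard filtration is therefore $\mathrm{Crit}(F) = C \supseteq C_1 \supseteq C_2 \supseteq \cdots$, with \emph{three} distinct estimates: (i) $F(C\setminus C_1)$ has measure zero, (ii) $F(C_i\setminus C_{i+1})$ has measure zero for $i\ge1$, and (iii) $F(C_k)$ has measure zero for $k$ large. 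Your sketch describes (ii) and (iii) but never addresses (i), and (i) is in fact the delicate step. The argument there is not the higher-partial straightening you describe: at $p\in C\setminus C_1$ one picks a nonvanishing \emph{first} partial, say $\partial F_1/\partial x_1(p)\ne0$, uses $(F_1,x_2,\dots,x_n)$ as new local coordinates so that $F$ carries each slice $\{t\}\times\mathbb{R}^{n-1}$ into $\{t\}\times\mathbb{R}^{m-1}$, observes that critical points of $F$ inside a slice are precisely critical points of the restricted map $F^t:\mathbb{R}^{n-1}\to\mathbb{R}^{m-1}$, applies the inductive hypothesis to each $F^t$, and integrates over $t$ by Fubini. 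The higher-partial straightening of your step (ii) does not substitute for this, because the condition $\operatorname{rank} dF < m$ defining $C$ near $p$ is not the vanishing of a single smooth function, so there is no canonical hypersurface to straighten; the coordinate-change-plus-slice argument is what replaces it. Without this piece the proof does not cover the whole critical set once $m\ge2$, which is exactly the regime the paper needs, since the marginal map $t(\mathcal{F},\cdot)$ takes values in $\mathbb{R}^{|\mathcal{F}|}$ for arbitrary $|\mathcal{F}|$.
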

Hence if $T^{(m)}(\mathcal{F})$ has a non-empty interior then from Sard's theorem $u \in T^{(m)}(\mathcal{F})$ is a regular value for almost everywhere in $T^{(m)}(\mathcal{F})$. In other words, if $\mathcal{F}$ is a set of independent graphs and we pick $u \in Reg(\mathcal{F})$  at random, then the probability that $\realfeasibleregionml{m}$ is a smooth manifold is $1$. Hereafter, we assume that $\mathcal{F}$ is a set of independent graphs.

\subsection{The smoothness of $\realfeasibleregionmlpi{m}{\pi}$ almost everywhere}

$\realfeasibleregionmlpi{m}{\pi}$ is a real algebraic variety\footnote{A real algebraic variety is a subset of $\mathbb{R}^n$ defined by polynomial equations.}, and every real algebraic variety $V$ can be decomposed on smooth manifolds using the so-called Whitney Stratification \cite{kaloshin2005geometric}.  The difference between real algebraic varieties and smooth manifolds is subtle. Real algebraic varieties can have non-uniform dimensions; when a real algebraic variety has a uniform dimension, it is a smooth manifold. However, we are interested in seeing $\realfeasibleregionmlpi{m}{\pi}$  as a smooth manifold and investigating when it is smooth or smooth almost everywhere. Here, we use the same ideas of Whitney Stratification to prove that $\realfeasibleregionmlpi{m}{\pi}$ is almost everywhere but not necessarily with a uniform dimension. First we prove that  $\realfeasibleregionmllpi{m}{ [F] }{ [u]  }{  \pi  }$ is a smooth manifold.

\begin{restatable}{lemma}{lemnonzerogradient}
\label{lem:nonzerogradient}
Let $F$ be a simple graph. Then 
\begin{equation*}
\langle (\sfmat,\pi), \nabla t(F, (\sfmat,\pi) \rangle = |E(F)| t(F,(\sfmat,\pi)) 
\end{equation*}
any $(\sfmat, \pi)$ and $|E(F)|$ is the number of edges in $F$.   
\end{restatable}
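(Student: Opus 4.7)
The identity is Euler's theorem for homogeneous polynomials in disguise, so my plan is to first recast the claim as a homogeneity statement and then invoke Euler. The explicit formula for step functions, $t(F, (\sfmat, \pi)) = \sum_{x_1, \ldots, x_{|V(F)|}=1}^m \prod_{(i,j) \in E(F)} \sfmat_{x_i x_j} \prod_{i \in V(F)} \pi_{x_i}$, exhibits $t(F, \cdot)$ as a polynomial each of whose monomials contains exactly $|E(F)|$ factors drawn from the array $\sfmat$. Viewed as a function of the entries of $\sfmat$, with $\pi$ held fixed, it is therefore homogeneous of degree $|E(F)|$, and Euler's identity $\langle x, \nabla p(x) \rangle = k \, p(x)$ for any degree-$k$ homogeneous $p$ will then yield the claim in one line.

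As a self-contained alternative that only uses what has been developed in the paper, I would substitute the partial-derivative formula of Theorem \ref{thm:partialderivatives}, namely $\partial t(F, (\sfmat, \pi))/\partial \sfmat_{ij} = \pi_i \pi_j \sum_{(j_1, j_2) \in \mathrm{orb}((i,j))} t_{j_1 j_2}(\partial^{\bullet \bullet} F, (\sfmat, \pi))$, and regroup the weighted sum $\sum_{i \leq j} \sfmat_{ij}\, \partial t/\partial \sfmat_{ij}$. Multiplying by $\sfmat_{ij}\pi_i\pi_j$ and summing over the orbit will reinstate the factor $\sfmat_{x_i x_j}$ that had been removed in forming $F^{\bullet(a,b)}$, so the contribution of a fixed edge $(a,b)\in E(F)$ collapses into one copy of $t(F, (\sfmat, \pi))$; since $\partial^{\bullet \bullet} F$ ranges over all edges of $F$, the total will be $|E(F)|$ copies.

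The main obstacle, and really the only one, is notational: I will need to track the orbit and symmetry factors carefully so that each edge is counted exactly once (avoiding double-counting from the $\Sigma_d$-action on index tuples), and to remain consistent about whether the sum ranges over ordered pairs $(i,j)$ or over non-decreasing pairs $i\leq j$, per the convention fixed in Section \ref{sec:stepfunctionspace}. Once this bookkeeping is in place, both the Euler-identity shortcut and the direct combinatorial unfolding become one-line derivations.
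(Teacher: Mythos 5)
Your proposal is correct, and your first (Euler) route is genuinely shorter than what the paper does. The paper's own proof is exactly your ``self-contained alternative'': it substitutes the formula from Theorem~\ref{thm:partialderivatives} for $\partial t/\partial \sfmat_{i_1\cdots i_{d_k},k}$, reinstates the factor $\pi_{i_1}\cdots\pi_{i_{d_k}}\sfmat_{i_1\cdots i_{d_k},k}$ to reconstruct $t(F^{\bullet(a_1\cdots a_{d_k})})\cdot\sfmat = t(F)$ edge by edge, and sums over $E(F)$ to get the factor $|E(F)|$. Your Euler observation replaces that bookkeeping with a one-liner: each monomial of $t(F,(\sfmat,\pi))$ contains exactly $|E(F)|$ factors from the array $\sfmat$, so $t$ is $\sfmat$-homogeneous of degree $|E(F)|$, and Euler's identity does the rest. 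What the Euler route buys you is that the orbit/symmetry bookkeeping you were worried about simply disappears --- homogeneity is indifferent to how the monomials are indexed --- whereas the paper has to track the $\mathrm{orb}((i_1\cdots i_{d_k}))$ action carefully when converting a sum over non-decreasing index tuples into a sum over all tuples.

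One point worth making explicit, which you touch on correctly with ``$\pi$ held fixed'': $t(F,(\sfmat,\pi))$ is in fact bihomogeneous, of degree $|E(F)|$ in $\sfmat$ and degree $|V(F)|$ in $\pi$. If the pairing $\langle(\sfmat,\pi),\nabla t\rangle$ were taken over \emph{both} blocks of coordinates, Euler would give $(|E(F)|+|V(F)|)\,t(F,(\sfmat,\pi))$, not $|E(F)|\,t(F,(\sfmat,\pi))$. The paper's proof only ever differentiates with respect to the $\sfmat$-entries, so the intended reading is that the gradient (and hence the inner product) is restricted to the $\sfmat$-coordinates. Your proof should state this restriction explicitly rather than leave it implicit in the phrase ``with $\pi$ held fixed.'' Finally, although the lemma says ``simple graph,'' the paper's own proof (and yours) works verbatim for multi-relational hypergraphs: every monomial still contains one $\sfmat$-factor per hyperedge, so the homogeneity degree is $|E(F)|=\sum_k|E_k(F)|$.
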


\begin{restatable}{lemma}{lemsmoothnessalmosteverywhere}
\label{lem:smoothnessalmosteverywhere}
Let $F=\sum_k a_k F_k$ be a quantum graph.
If $u \neq 0$  and $\pi \in (\convexcombm{m})^\circ$ then 
$\realfeasibleregionmllpi{m}{ [\mathcal{F} }{ [u]  }{  \pi  }$ is smooth almost everywhere.
\end{restatable}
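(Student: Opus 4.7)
The feasible region $V := \realfeasibleregionmllpi{m}{[F]}{[u]}{\pi}$ sits in the finite-dimensional vector space $\mathbb{R}_S^{(m,r,d)}$ of step-function parameters (since $\pi$ is held fixed) and is the zero set of the single real polynomial $p(A) := t(F,(A,\pi)) - u$. A point $A\in V$ fails to be a smooth point of $V$ precisely when $\nabla_A t(F,(A,\pi))=0$, so the singular locus $V_{\mathrm{sing}}$ is itself a real algebraic subvariety of $V$. To show $V$ is smooth almost everywhere it therefore suffices to establish the strict containment $V_{\mathrm{sing}}\subsetneq V$, since a proper real algebraic subvariety of $V$ has strictly smaller dimension than any top-dimensional stratum of $V$ and hence measure zero.

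The key identity, obtained by applying Lemma \ref{lem:nonzerogradient} constituent-wise to $F=\sum_k a_k F_k$ and using that each $t(F_k,(\cdot,\pi))$ is $A$-homogeneous of degree $|E(F_k)|$, reads
\[
\bigl\langle A,\ \nabla_A t(F,(A,\pi))\bigr\rangle \;=\; \sum_k a_k\,|E(F_k)|\,t(F_k,(A,\pi)).
\]
When all constituents share a common edge count $e$, the right-hand side collapses to $e\,t(F,(A,\pi)) = eu$, which is nonzero on $V$ by the hypothesis $u\neq 0$; thus $\nabla_A t(F,\cdot)$ is nowhere zero on $V$ and $V$ is in fact smooth everywhere, not merely almost everywhere.

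In the general heterogeneous case, I group constituents by edge count and write $F=\sum_j Q_j$, where $Q_j$ collects constituents of edge count $e_j$ for pairwise distinct $e_j$. The singular locus is then contained in $V \cap \{g_2 = 0\}$, with $g_2(A) := \sum_j e_j\, t(Q_j,(A,\pi))$, while $V$ itself is cut out by $g_1(A) := \sum_j t(Q_j,(A,\pi)) - u$. I claim $g_2$ is not identically zero on $V(g_1)$: if it were, the complex Nullstellensatz would force $\sqrt{(g_1)}$ to divide $g_2$; a degree comparison ($\deg g_1 = \deg g_2 = \max_j e_j$) then yields $g_2 = c\,g_1$ for some constant $c\in\mathbb{R}$; matching constant terms gives $cu=0$, hence $c=0$ from $u\neq 0$; but this forces $\sum_j e_j\, t(Q_j,(\cdot,\pi))\equiv 0$ as a polynomial in $A$, contradicting the linear independence of the $t(Q_j,(\cdot,\pi))$ across their distinct $A$-degrees $e_j$.

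The principal technical obstacle is the possibility that $g_1$ is reducible over $\mathbb{C}$, which I handle by decomposing $V(g_1)$ into irreducible components, applying the ideal-divisibility argument componentwise, and descending to real points using that a proper complex subvariety of a complex variety meets the real locus in a set of real-measure zero relative to the top-dimensional part. The hypothesis $\pi\in(\convexcombm{m})^\circ$ enters at exactly one step: it guarantees that each $t(Q_j,(\cdot,\pi))$ is a genuinely nonzero polynomial in $A$, preventing the pathological cancellations that would otherwise invalidate the independence claim.
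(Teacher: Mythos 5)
Your approach is genuinely different from the paper's, and the difference introduces a gap that the paper's simpler route avoids. The paper also begins from Lemma \ref{lem:nonzerogradient} and the identity $\langle A, \nabla_A t(F,(A,\pi))\rangle = \sum_k a_k |E(F_k)|\, t(F_k,(A,\pi))$, but it then argues in the opposite direction: treating the tuple $(t(F_k,(A,\pi)))_k$ as a vector $z\in\mathbb{R}^n$, the constraint $\sum_k a_k z_k = u$ is an affine hyperplane and the condition $\sum_k a_k e_k z_k \neq 0$ is the complement of a linear hyperplane through the origin; since $u\neq 0$, the affine hyperplane is not contained in the linear one, so a point of the feasible region with nonvanishing gradient can be exhibited directly. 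Your route instead assumes the singular locus fills the real variety and derives a contradiction via the Nullstellensatz and a degree comparison.

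The gap is in the Nullstellensatz step. To conclude $g_2 \in \sqrt{(g_1)}$ you need $g_2$ to vanish on the \emph{complex} zero set of $g_1$, but what you assumed (for contradiction) is only that $g_2$ vanishes on the \emph{real} zero set $V=\realfeasibleregionmllpi{m}{[F]}{[u]}{\pi}$. Vanishing on the real locus forces vanishing on the complex variety only when the real points are Zariski-dense in the complex one — and that is essentially the same thing as $V$ having a smooth top-dimensional real point, which is what you are trying to prove. Your "descent" remark (that a proper complex subvariety meets the real locus in measure zero) presupposes you already know $\{g_2=0\}$ cuts out a proper complex subvariety of $\{g_1=0\}$, so the reasoning is circular. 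In contrast, the paper does not rely on any algebraic closure: it simply produces one real point with nonvanishing gradient, which already makes the singular set a proper real-algebraic subset and hence of measure zero. Secondarily, the degree-comparison step tacitly assumes $g_1$ is irreducible (you acknowledge this but do not resolve it), and the assertion that $\pi\in(\convexcombm{m})^\circ$ forces each $t(Q_j,(\cdot,\pi))$ to be a nonzero polynomial is unsupported: $Q_j$ is a real linear combination of same-edge-count constituents, and for small $m$ (or unlucky coefficients $a_k$) the combination can vanish identically in $A$ even when every $\pi_i>0$.
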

Hence  $\realfeasibleregionmllpi{m}{ [F] }{ [u]  }{  \pi  }$ is an analytical manifold almost everywhere for any quantum graph $F$, $u \neq 0$ and $\pi \in \convexcombm{m}$.  Hence, we prove that  $\realfeasibleregionmlpi{m}{\pi}$ is analytical almost everywhere. 

\begin{restatable}{theorem}{thmsmoothnessalmosteverywhere}
\label{thm:smoothnessalmosteverywhere}
Let $\mathcal{F}$ be a set of independent graphs. If $u \in T^{(m)}(\mathcal{F})^\circ $  then   $\realfeasibleregionmlpi{m}{\pi}$ is analytical  almost everywhere for all $\pi \in (\convexcombm{m})^\circ$.
\end{restatable}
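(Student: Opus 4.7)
The plan is to exploit the real algebraic structure of $\realfeasibleregionmlpi{m}{\pi}$. As the zero locus in $\realstepfunctionspacelpi{m}{\pi} \cong \mathbb{R}^{n(m,r,d)}$ of the $|\mathcal{F}|$ polynomial equations $t(\mathcal{F}_j,(\sfmat,\pi))-u_j = 0$, it is a real algebraic variety. Its analytic singular locus is contained in $\Sigma_\pi := \{\sfmat : \mathrm{rank}\,J_\sfmat\, t(\mathcal{F},\cdot) < |\mathcal{F}|\}$, the common vanishing set of all $|\mathcal{F}|\times|\mathcal{F}|$ minors of the Jacobian; this is itself a real algebraic subvariety. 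At any $\sfmat \in \realfeasibleregionmlpi{m}{\pi}\setminus \Sigma_\pi$, the analytic implicit function theorem yields a local analytic manifold structure of dimension $n(m,r,d)-|\mathcal{F}|$. Our task is therefore to show the regular locus is \emph{non-empty}; by the standard Whitney stratification of real algebraic varieties, non-emptiness forces $\Sigma_\pi \cap \realfeasibleregionmlpi{m}{\pi}$ to be a proper subvariety of strictly smaller dimension, hence of measure zero, which is the conclusion.

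First, I would use the independence of $\mathcal{F}$ via Lemma \ref{lem:openmarginal} to see that $T^{(m)}(\mathcal{F},\pi)$ has non-empty interior; in particular, some $|\mathcal{F}|\times|\mathcal{F}|$ minor of $J_\sfmat\, t(\mathcal{F},\cdot)$ is not identically zero on $\realstepfunctionspacelpi{m}{\pi}$, so $\Sigma_\pi$ is a \emph{proper} subvariety of the ambient space. Next, I would apply Sard's Theorem (Theorem \ref{thm:sardstheorem}) to the polynomial map $\Phi(\sfmat) := t(\mathcal{F},(\sfmat,\pi))$ restricted to the strata of a Whitney stratification of $\Sigma_\pi$, yielding that $\Phi(\Sigma_\pi)$ has Lebesgue measure zero in $\mathbb{R}^{|\mathcal{F}|}$. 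On the other hand, the constant-rank theorem makes $\Phi$ an open map on $\realstepfunctionspacelpi{m}{\pi}\setminus \Sigma_\pi$, so $\Phi(\realstepfunctionspacelpi{m}{\pi}\setminus \Sigma_\pi)$ is open. Combining the two: if $u \in T^{(m)}(\mathcal{F},\pi)$ is an interior point of the image, it cannot lie solely in the measure-zero set $\Phi(\Sigma_\pi)$, so there must exist $\sfmat^* \in \Phi^{-1}(u)$ with $\mathrm{rank}\, J_{\sfmat^*}\Phi = |\mathcal{F}|$, i.e., the regular locus of $\realfeasibleregionmlpi{m}{\pi}$ is non-empty. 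A final appeal to the Whitney stratification of $\realfeasibleregionmlpi{m}{\pi}$ then packages everything up.

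The main obstacle I foresee is reconciling the stated hypothesis ``$u \in T^{(m)}(\mathcal{F})^\circ$'' (where $\pi$ is \emph{not} fixed) with the fixed-$\pi$ interior-point condition that my non-emptiness argument actually uses, namely $u \in T^{(m)}(\mathcal{F},\pi)^\circ$. In general $u$ being interior to $\bigcup_{\pi'} T^{(m)}(\mathcal{F},\pi')$ does not force it to be interior to the slice $T^{(m)}(\mathcal{F},\pi)$. I would handle this dichotomously: for $\pi \in (\convexcombm{m})^\circ$ with $u \notin T^{(m)}(\mathcal{F},\pi)$ the feasible region is empty and the claim is vacuous, so the issue only arises when $u \in T^{(m)}(\mathcal{F},\pi)$; in that case I would argue, using that the family of polytopes $T^{(m)}(\mathcal{F},\pi')$ depends polynomially on $\pi'$ and that $u$ is an interior point of the union over $\pi'$, that perturbing $\pi$ slightly still produces feasible $u$, forcing $u$ into the relative interior of $T^{(m)}(\mathcal{F},\pi)$ and so into the open image $\Phi(\realstepfunctionspacelpi{m}{\pi}\setminus \Sigma_\pi)$, which is exactly what the argument needs.
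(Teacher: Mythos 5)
Your approach is genuinely different from the paper's, and it contains a gap at its crux. The paper proceeds by induction on the number of constraints, looking at each connected component $N$ of the intermediate variety and distinguishing the case where the next constraint $t(\mathcal{F}_{n+1},\cdot)$ is critical on all of $N$ (a \emph{redundant} constraint, so intersecting with the new level set preserves the dimension) from the case where its critical set on $N$ has empty interior (so the dimension drops by one). This component-by-component case analysis is doing essential work: it is precisely how the paper handles the fact that $\realfeasibleregionmlpi{m}{\pi}$ can be reducible with irreducible components of \emph{different} dimensions.

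This is where your argument breaks. You assert that non-emptiness of the regular locus ``forces $\Sigma_\pi \cap \realfeasibleregionmlpi{m}{\pi}$ to be a proper subvariety of strictly smaller dimension, hence of measure zero.'' That inference is only valid when the variety is irreducible (or at least pure-dimensional). If $\realfeasibleregionmlpi{m}{\pi} = V_1 \cup V_2$ where $V_1$ has codimension $|\mathcal{F}|$ and is transversally cut out (so $V_1 \setminus \Sigma_\pi \neq \emptyset$), while $V_2$ is a higher-dimensional component on which some constraints become dependent, then $V_2 \subseteq \Sigma_\pi$ and $\Sigma_\pi \cap V$ has dimension \emph{larger} than the regular locus, not smaller. ``Proper subvariety'' does follow from non-emptiness, but ``strictly smaller dimension'' does not, and with it goes your measure-zero conclusion. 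Note that the redundant-constraint case the paper singles out is exactly this scenario, so it is not a pathology you can rule out a priori. To fix the argument, you would need to argue per irreducible component (showing the intrinsic singular locus of each component is small, and that distinct components meet in lower-dimensional sets), which is in effect what the paper's induction is accomplishing. Separately, your final paragraph reconciling $u \in T^{(m)}(\mathcal{F})^\circ$ with the slice-wise interior condition $u \in T^{(m)}(\mathcal{F},\pi)^\circ$ is hand-waved: interiority of $u$ in the union over $\pi'$ combined with feasibility at one specific $\pi$ does not by itself force $u$ into the relative interior of the slice $T^{(m)}(\mathcal{F},\pi)$; some additional argument (e.g.\ openness of the map $(\sfmat,\pi') \mapsto t(\mathcal{F},(\sfmat,\pi'))$ at a regular point with that $\pi'=\pi$) is needed to close that step.
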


\begin{proof}
The proof is based on the induction of the number of constraints. The base step is ensured by Lemma \ref{lem:smoothnessalmosteverywhere}.

The induction step. $\realfeasibleregionmllpi{m}{ [\mathcal{F}_1, \cdots \mathcal{F}_n]  }{ [u_1, \cdots, u_n]  }{  \pi  }$ has regular points almost everywhere and  $\realfeasibleregionmllpi{m}{ [\mathcal{F}_1, \cdots \mathcal{F}_n]  }{ [u_1, \cdots, u_n]  }{  \pi  }$  can have different connected components and for each connected component $N$  of $d_N$ dimensions, we have two possibilities 

\begin{enumerate}
    \item The   critical points of $t(\mathcal{F}_{n+1}, \cdot )$ on $\realfeasibleregionmllpi{m}{ [\mathcal{F}_1, \cdots \mathcal{F}_n]  }{ [u_1, \cdots, u_n]  }{  \pi  } $ have a non-empty interior. Hence   $t(\mathcal{F}_{n+1}, \cdot )$ has critical points to the entire $N$. In this case, $t(\mathcal{F}_{n+1}, \cdot )$ has dependency with $[\mathcal{F}_1, \cdots \mathcal{F}_n]$, thus the constraint $t(\mathcal{F}_{n+1},(\sfmat,\pi))=u_{n+1}$  is redundant. It follows, $N_1 = N \cap \realfeasibleregionmllpi{m}{ [\mathcal{F}_{n+1}] }{ [u_{n+1}]  }{  \pi  }$ is analytical almost everywhere  and $N_1$ has  $d_N$ dimensions.

    \item The critical points of $t(\mathcal{F}_{n+1}, \cdot )$ on $\realfeasibleregionmllpi{m}{ [\mathcal{F}_1, \cdots \mathcal{F}_n]  }{ [u_1, \cdots, u_n]  }{  \pi  } $ have an empty interior. It follows, $N_1 = N \cap \realfeasibleregionmllpi{m}{ [\mathcal{F}_{n+1}] }{ [u_{n+1}]  }{  \pi  }$ is analytical almost everywhere  and $N_1$ has  $d_N-1$ dimensions.
\end{enumerate}
Hence, we conclude that  $\realfeasibleregionmlpi{m}{\pi}$ is analytical almost everywhere.
\end{proof}

\subsection{Non-increasing number of connected components of $\realfeasibleregionml{m}$}

One cornerstone of the main result is proof that there is no new connected component in $\realfeasibleregionml{m+1} \setminus \realfeasibleregionml{m+1}$. Hence, the number of connected components of $\realfeasibleregionml{m}$ is not increased when $m$ is increased to $m+1$. To prove this, we need two definitions.

\begin{definition}
Let $m \leq m'$. Let $T^{-(m,m')}$ be a set of the level sets of $t(\mathcal{F}, \cdot):\realstepfunctionspacel{m'} \to \mathbb{R}^{|\mathcal{F}|}$ when $u \in T^{(m)}(\mathcal{F})$. More precisely,

\begin{equation*}
    T^{-(m,m')}(\mathcal{F}) = \{ x \in \realstepfunctionspacel{m'} \mid t(\mathcal{F},x) \in T^{(m)}(\mathcal{F}) \}.
\end{equation*}
Hence it is clear that $t(\mathcal{F}, T^{-(m,m')}(\mathcal{F})) = T^{(m)}(\mathcal{F})$ for all $m \leq m'$.
\end{definition}

\begin{definition}[The Aggregated Feasible Region]
Let $M^{(m)}(\mathcal{F}) \subset \mathbb{R}^{n(m,r,d)+m-1+|\mathcal{F}|}$  be the aggregated region of $\realfeasibleregionml{m}$ defined by 
\begin{equation*}
    M^{(m)}(\mathcal{F}) = \{ (x,z) \in \realstepfunctionspacel{m} \times \mathbb{R}^{|\mathcal{F}|}: \forall i  \in [|\mathcal{F}|] \quad t(\mathcal{F}_i,x)-z_i = 0 \}
\end{equation*}
\end{definition}
\begin{remark}
\label{rem:aggregatedfeasibleregion}
Note that $M^{(m)}(\mathcal{F})$ is the graph of the marginal map, thus $M^{(m)}(\mathcal{F})$  is connected.  Hence, the regular  and critical points of the marginal map  are arbitrarily close in 
$\realfeasibleregionml{m}$ when $u$ varies in $T^{(m)}(\mathcal{F})$.

\end{remark}

\begin{restatable}{theorem}{thmnumbercomponents}
\label{thm:numbercomponents}
Let $\mathcal{F}$ be a set of independent graphs and let $u \in \Omega(\mathcal{F})$ and let $m \geq m_0(\mathcal{F},u)$. Then there is no new connected component $N \subset \realfeasibleregionml{m+1} \setminus \realfeasibleregionml{m}$. Hence 
the  number of connected components of $\realfeasibleregionml{m+1}$  is not higher than the number of connected components of $\realfeasibleregionml{m}$.

\end{restatable}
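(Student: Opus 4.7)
The plan is to argue by contradiction. Suppose $N \subseteq \realfeasibleregionml{m+1}$ is a connected component disjoint from every split image $\theta(\realfeasibleregionml{m},\lambda,k)$ for $\lambda \in (0,1)$ and $k \in [m]$. Since the split map preserves the underlying graphon (it is only a change of representation), one has $t(F,\theta((\sfmat,\pi),\lambda,k)) = t(F,(\sfmat,\pi))$ for every quantum graph $F$, so $\theta(\realfeasibleregionml{m},\lambda,k) \subseteq \realfeasibleregionml{m+1}$; this set is nonempty because $m \geq m_0(\mathcal{F},u)$ forces $\realfeasibleregionml{m}$ to be nonempty. The task is then to exhibit a continuous path inside $\realfeasibleregionml{m+1}$ from $N$ to at least one point of this split image, contradicting that $N$ is a full connected component.

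The second step is to leverage the aggregated feasible region $M^{(m+1)}(\mathcal{F})$, which is homeomorphic to $\realstepfunctionspacel{m+1}$ via the graph embedding $x \mapsto (x, t(\mathcal{F},x))$ and therefore path-connected, as already noted in Remark~\ref{rem:aggregatedfeasibleregion}. Let $\pi_2: M^{(m+1)}(\mathcal{F}) \to T^{(m+1)}(\mathcal{F})$ denote projection onto the marginal polytope; its fibre over $u'$ is $\realfeasibleregionmq{m+1}{\mathcal{F}}{u'}\times\{u'\}$. Since $u \in \Omega(\mathcal{F})$, by definition $u \in \reg{\mathcal{F}}$ and $u$ lies in the interior of $T^{(m+1)}(\mathcal{F})$ (recall $m\ge m_0$ so $u \in T^{(m)}(\mathcal{F})\subseteq T^{(m+1)}(\mathcal{F})$). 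Combined with Theorem~\ref{thm:sardstheorem} and the implicit function theorem, $\pi_2$ is a smooth submersion on a neighbourhood $V \subseteq \reg{\mathcal{F}}$ of $u$, and therefore a locally trivial fibration there. On such a $V$ the number of connected components of each fibre is locally constant, so $N$ extends uniquely to a continuous family $\{N(u')\}_{u' \in V}$ of connected components of $\realfeasibleregionmq{m+1}{\mathcal{F}}{u'}$.

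The third step globalises this continuation. Using path-connectedness of $M^{(m+1)}(\mathcal{F})$ and the fact that $\cup_{m'} \mathrm{Cri}^{(m')}(\mathcal{F}) \cup \partial T^{(m')}(\mathcal{F})$ has measure zero, any two regular fibres are joined by a path in $M^{(m+1)}(\mathcal{F})$ whose projection to $T^{(m+1)}(\mathcal{F})$ can be chosen, by a general position argument, to stay inside $\Omega(\mathcal{F})$. Along such a path the discrete invariant ``connected component containing the lifted point'' is locally constant, and we propagate $N$ accordingly. To reach a contradiction, pick $u' \in \Omega(\mathcal{F})$ together with an explicit element $W^* \in \realfeasibleregionml{m}(u')$ whose splits populate a neighbourhood dense enough that \emph{every} connected component of $\realfeasibleregionml{m+1}(u')$ meets $\theta(\realfeasibleregionml{m}(u'),\lambda,k)$ for some $(\lambda,k)$; existence of such $u'$ follows from Lemma~\ref{lem:openmarginal} on non-empty interior of $T^{(m)}(\mathcal{F})$ combined with Lemma~\ref{lem:smoothnessalmosteverywhere}. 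Transporting $N$ from $u$ to this $u'$ forces $N(u')$ to intersect a split image, which by transport backward to $u$ contradicts the initial assumption on $N$.

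The main obstacle will be the globalisation step: keeping the deformation of $N$ inside $\Omega(\mathcal{F})$ and ruling out that it might pinch off into a degenerate stratum at a boundary $\partial T^{(m')}(\mathcal{F})$ or bifurcate at a critical value of a lower-arity marginal map. The definition $\Omega(\mathcal{F})=\reg{\mathcal{F}}\setminus\cup_{m'>0}\partial T^{(m')}(\mathcal{F})$ is tailored precisely to exclude both obstructions, but exploiting it rigorously requires a careful choice of connecting path together with a compactness argument on $\realstepfunctionspacel{m+1}$ to extract a limiting component. A secondary subtlety is to verify that split images are sufficiently dense at some regular value $u'$ that no component of $\realfeasibleregionml{m+1}(u')$ escapes them; this is where the independence of $\mathcal{F}$ and the non-emptiness of the interior from Lemma~\ref{lem:openmarginal} enter decisively.
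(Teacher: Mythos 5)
Your approach is genuinely different from the paper's and, in its current form, has a gap that I do not see how to close.

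The paper stays at the fixed $u \in \Omega(\mathcal{F})$ and argues entirely locally. It decomposes $\realfeasibleregionml{m+1}$ over $\lambda \in [0,1]$ as the disjoint union of $\realfeasibleregionmlpi{m+1}{\theta(\pi,\lambda,m)}$, assumes a new component $N$ exists, sets
\[
\lambda^* = \inf\bigl\{\lambda : N \cap \realfeasibleregionmlpi{m+1}{\theta(\pi,\lambda,m)} \neq \emptyset \bigr\},
\]
shows $\lambda^* > 0$ (otherwise $u \in \partial T^{(m)}(\mathcal{F})$, contradicting $u \in \Omega(\mathcal{F})$), and then uses the exponential map together with a dimension count on tangent spaces to produce points of $N$ with partition parameter $\lambda_3 < \lambda^*$, contradicting the infimum. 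No transport of $N$ across different values of $u$ is required; the aggregated region only appears as a side remark for a degenerate case that is in fact excluded by the hypotheses. Your proposal instead wants to transport $N$ along the connected graph $M^{(m+1)}(\mathcal{F})$ to a distinguished regular value $u'$ where you claim every component of $\realfeasibleregionmq{m+1}{\mathcal{F}}{u'}$ meets a split image, then transport back.

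The critical gap is precisely in that claimed ``existence of such $u'$.'' You cite Lemma~\ref{lem:openmarginal} and Lemma~\ref{lem:smoothnessalmosteverywhere}, but neither of them says anything about all connected components of the $(m+1)$-level fiber meeting the split locus: the first is about non-emptiness of the interior of the marginal polytope, the second about almost-everywhere smoothness of a single-constraint slice. Asserting that some $u'$ enjoys the very conclusion you are trying to prove is circular unless you exhibit such a $u'$ by an independent argument, and you do not. There are also subsidiary issues with the transport itself: local triviality of the fibration $\pi_2 : M^{(m+1)}(\mathcal{F}) \to T^{(m+1)}(\mathcal{F})$ over a regular value would follow from Ehresmann's theorem only if the map were proper, and $\realstepfunctionspacel{m+1}$ is an unbounded subset of a Euclidean space (the fibers $\realfeasibleregionml{m+1}$ are generically noncompact), so this is not automatic. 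Finally, the ``general position'' claim that a path between $u$ and $u'$ can be kept inside $\Omega(\mathcal{F})$ while preserving the discrete invariant ``component containing the lifted point'' is exactly the kind of statement that needs a careful compactness or stratification argument, and you explicitly flag this yourself without supplying it. The paper's local $\lambda^*$-infimum argument is specifically designed to avoid all three of these difficulties: it never leaves the fiber over $u$, never needs global triviality, and never needs an anchor value $u'$.
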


\subsection{The space of critical points of $f_s(\cdot)$ on $\realfeasibleregionml{m}$ }
Recall that a point $(\sfmat,\pi) \in \realfeasibleregionml{m}$
 is a  critical point of $f(\cdot,\pi)$ if there is 
  $\beta \in \mathbb{R}^{|\mathcal{F}|}$ such that
\begin{equation*}
\nabla f_s((\sfmat,\pi)) =  \sum_{i=1}^{|\mathcal{F}|} \beta_i \nabla t(\mathcal{F}_i,(\sfmat,\pi))
\end{equation*}
Here we study the critical points of all  $\realfeasibleregionml{m}$  when $u$ ranges over all the feasible combinations of subgraph densities in $T^{(m)}(\mathcal{F})$.  Then, the spaces of critical points are defined by
\begin{definition}
\label{def:criticalmanifold}
Let  $\mathcal{F}$ be a set of independent simple graphs.  Let $f_s(\cdot)$ be a smooth function defined in  (\ref{eq:generalratefunctionstep}) and let $m$ be a positive integer such that $n(m,r,d) > |\mathcal{F}|$ then 
\begin{eqnarray*}
\criticalpoints{m} &=& \{ ((\sfmat,\pi),\beta) \in \realstepfunctionspacel{m} \times \mathbb{R}^{|\mathcal{F}|} \mid  t(\mathcal{F},(\sfmat,\pi)) \in Reg(\mathcal{F}) \\ && \nabla f_s  -  \sum_{i=1}^{|\mathcal{F}|} \beta_i \nabla t(\mathcal{F}_i)\big{|}_{(\sfmat,\pi)}=0\}.
\end{eqnarray*}
\end{definition}

\subsubsection{The split map on critical points of $f_s$ }
\label{sec:splitmapcritical}

Here we prove that the split map $\theta(\cdot, \lambda, k):\realstepfunctionspacel{m} \to \realstepfunctionspacel{m+1} $ preserves the critical points of density functions $f_s$ i.e. $\theta( \criticalpoints{m}, \lambda, k) \subset \criticalpoints{m+1} $.

\begin{restatable}{lemma}{lemcriticalsplit}
\label{lem:criticalsplit}
The split operation preserve the critical points of $f_s$ either on $\realfeasibleregionmlpi{m}{\pi^*}$ or on $\realfeasibleregionml{m}$.
\begin{enumerate}
\item Let  $(\sfmat^*,\pi^*) \in \realfeasibleregionml{m}$ be a critical point  of $f_s$. Then $\theta((\sfmat^*,\pi^*),\lambda,k)$ is a critical point of $f_s$ on $\realfeasibleregionml{m+1}$ with the same Lagrange multipliers for any  $\lambda \in [0,1]$ and $ k \in [m]$.

    \item Let  $(\sfmat^*,\pi^*) \in \realfeasibleregionmlpi{m}{\pi^*}$ be a critical point  of $f_s$. Let $c \in \mathbb{R}$. Let $g_\epsilon: \realstepfunctionspacel{m+1} \to \mathbb{R}$, defined by 
\begin{equation*}
    g_c((\sfmat, \pi)) = c \sum_{ij=1}^{m+1} (\sfmat_{ij} - \theta_{ij}( \sfmat^*, 0, k) )^2 \pi_i \pi_j 
\end{equation*}
and let $f_c = f_s + g_c$. Recall that $\theta( \sfmat^*, 0, k)$ is a matrix  with duplicate row/columns at  $k$ and  $k+1$ and  $\theta_{ij}( \sfmat^*, 0, k)$ is the $ij$ entry. 

Then $\theta((\sfmat^*,\pi^*),\lambda,k)$ is a critical point of $f_c$ on $\realfeasibleregionmlpi{m+1}{\theta(\pi^*,\lambda,k)}$ with the same Lagrange multipliers for any $c \in \mathbb{R}$ and $\lambda \in [0,1]$ and $ k \in [m]$.

\end{enumerate}
\end{restatable}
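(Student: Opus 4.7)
The plan rests on the fundamental observation that the split map $\theta(\cdot,\lambda,k)$ is a change of parameterization that does not alter the underlying graphon as a function on $[0,1]^d$. Consequently, for any quantum graph $F$ we have $t(F,\theta((\sfmat,\pi),\lambda,k))=t(F,(\sfmat,\pi))$ and $f_s(\theta((\sfmat,\pi),\lambda,k))=f_s((\sfmat,\pi))$, and likewise the partial densities $t_{j_1\cdots j_\ell}(F^{\bullet\cdots\bullet},\cdot)$ are preserved at representatives of corresponding partition classes. The task is therefore purely computational: compute the gradients in the enlarged parameter space at the split point $(\sfmat',\pi')=\theta((\sfmat^*,\pi^*),\lambda,k)$ using Theorem~\ref{thm:partialderivatives} and check that the same Lagrange vector $\beta^*$ works.

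For Part 1, I would apply (\ref{eq:1partialderstep}) and (\ref{eq:1partialderstep2}) at $(\sfmat',\pi')$, noting that by construction rows/columns $k$ and $k+1$ of $\sfmat'$ are identical copies of row/column $k$ of $\sfmat^*$, while $\pi'_k=\lambda\pi^*_k$ and $\pi'_{k+1}=(1-\lambda)\pi^*_k$. I then split the verification into three cases. \emph{Case A:} For a parameter $\sfmat'_{i_1,\ldots,i_{d_k},k}$ whose indices all lie in $[k]$ or all correspond (after reindexing) to positions distinct from $\{k,k+1\}$, the factor $\pi'_{i_1}\cdots\pi'_{i_{d_k}}$ differs from its $(\sfmat^*,\pi^*)$ analogue only by a constant product of $\lambda$'s and $(1-\lambda)$'s; since the partial density factor in the orbit sum is unchanged, both $\partial f_s$ and $\partial t(\mathcal{F}_i)$ scale by the same constant and the Lagrange equation inherited from $(\sfmat^*,\pi^*)$ still balances with the same $\beta^*$. \emph{Case B:} For the ``new'' entries in which the index $k+1$ appears, I use that $\sfmat'_{\ldots k\ldots}=\sfmat'_{\ldots k+1\ldots}$ and that the orbit sums of partial densities are symmetric under exchanging $k$ with $k+1$, so these equations reduce to those of Case A after accounting for the weight. \emph{Case C:} For the partition coordinates, (\ref{eq:1partialderstep2}) gives $\partial t(F)/\partial\pi'_k=t_k(\partial^{\bullet}F,(\sfmat',\pi'))$ and similarly at $k+1$; both equal $t_k(\partial^{\bullet}F,(\sfmat^*,\pi^*))$ by the duplication, and the same identity holds for $f_s$ by a direct expansion, so the Lagrange condition on $\pi^*_k$ yields the two conditions on $\pi'_k$ and $\pi'_{k+1}$ simultaneously; the remaining coordinates transfer verbatim.

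For Part 2, I would first observe that by Definition~\ref{def:splitmap} the matrix component $\theta(\sfmat^*,\lambda,k)$ is independent of $\lambda$, so $\sfmat'=\theta(\sfmat^*,0,k)$ and hence $g_c((\sfmat',\pi'))=0$. Each summand of $g_c$ is $c\,(\sfmat_{ij}-\theta_{ij}(\sfmat^*,0,k))^2\pi_i\pi_j$, whose $\sfmat$-partial is $2c(\sfmat_{ij}-\theta_{ij}(\sfmat^*,0,k))\pi_i\pi_j$ and thus vanishes at $(\sfmat',\pi')$. Therefore $\nabla_{\sfmat}f_c=\nabla_{\sfmat}f_s$ at the split point, so the first-order Lagrange condition on the fixed-$\pi$ constrained region $\realfeasibleregionmlpi{m+1}{\theta(\pi^*,\lambda,k)}$ for $f_c$ reduces to the one for $f_s$, which follows from the $\sfmat$-part of the Part 1 computation restricted to the fixed-$\pi$ setting (equivalently, only Cases A and B above are needed, since $\pi$ is fixed).

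The main obstacle is the bookkeeping in Cases A and B: the orbit factor in (\ref{eq:1partialderstep}) depends on how many of the indices $i_1,\ldots,i_{d_k}$ coincide, and for a mixed set containing both ``$k$'' and ``$k+1$'' one must carefully compare the contribution of $\{k,\ldots,k+1,\ldots\}$-type orbits to that of the unsplit $\{k,\ldots,k,\ldots\}$ orbits in $(\sfmat^*,\pi^*)$. The symmetry $\sfmat'_{\ldots k\ldots}=\sfmat'_{\ldots k+1\ldots}$ makes these contributions collapse correctly, but this requires a careful indexing argument; once this is in place, the balance of weights $\lambda^s(1-\lambda)^{d_k-s}$ on the left and right of the Lagrange equation is automatic, and the statement follows for every $\lambda\in[0,1]$ and every $k\in[m]$.
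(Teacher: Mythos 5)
Your proposal takes essentially the same route as the paper: apply the partial-derivative formulas of Theorem~\ref{thm:partialderivatives} at the split point, observe that the $\pi$-product (and orbit-multiplicity) factors appear identically on both sides of each Lagrange equation and therefore cancel, and conclude that the resulting reduced condition depends only on the underlying step function and its partial densities, which the split map leaves unchanged. Your Case A/B/C decomposition and the explicit flag about orbit bookkeeping are a more careful rendering of what the paper compresses into the single remark that ``the above constraint depends only on the step function $(\sfmat,\pi)$ and not on their parameterization,'' and your Part~2 observation that $g_c$ and $\nabla_{\sfmat}g_c$ both vanish at the split point is the short computation the paper declares straightforward and omits.
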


\subsubsection{Smoothness of $\criticalpoints{m}$ }

A sufficient condition to prove the split operation on critical points does not bifurcate into another critical point is the smoothness of $\criticalpoints{m}$. Hence, we prove $\criticalpoints{m}$ is smooth almost everywhere. Thus, the split does not bifurcate other critical points almost cases when $u \in T(\mathcal{F})$.

\begin{restatable}{theorem}{thmalmostsmoothnessCr}
\label{thm:almostsmoothnessCr}    
The split of any critical point in $\criticalpoints{m+1}$ does not bifurcate into another critical point for almost all cases when $u \in T(\mathcal{F})$ and $m \geq m_0$.
\end{restatable}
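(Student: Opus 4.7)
The plan is to treat $\criticalpoints{m+1}$ as a real algebraic variety and identify a bifurcation of a split critical curve with $\criticalpoints{m+1}$ being singular at that point, so that Sard's theorem (Theorem \ref{thm:sardstheorem}) forces bifurcations to occur only for $u$ lying in a Lebesgue-null subset of $T(\mathcal{F})$.

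First, I would observe that $\criticalpoints{m+1}$ is, after intersection with the open regularity condition $t(\mathcal{F},\cdot)\in\reg{\mathcal{F}}$, the zero set of the polynomial Lagrangian map
\begin{equation*}
\Psi((\sfmat,\pi),\beta) \;=\; \nabla f_s((\sfmat,\pi)) \;-\; \sum_{i=1}^{|\mathcal{F}|}\beta_i\,\nabla t(\mathcal{F}_i,(\sfmat,\pi)).
\end{equation*}
By Lemma \ref{lem:criticalsplit}(1), the split $\theta((\sfmat^*,\pi^*),\lambda,k)$ of a critical point $(\sfmat^*,\pi^*)\in\criticalpoints{m}$ is an entire curve of critical points in $\criticalpoints{m+1}$ (parameterised by $\lambda\in[0,1]$) sharing the Lagrange multipliers $\beta^*$. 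At any point $p\in\criticalpoints{m+1}$ where $D_p\Psi$ has the maximal rank $n(m+1,r,d)+m$, the implicit function theorem makes $\criticalpoints{m+1}$ locally a smooth analytic manifold of dimension $|\mathcal{F}|$, and the branch of critical points through $p$ is the unique local irreducible component; no other branch can cross the split curve at such a $p$, ruling out bifurcation there. Hence bifurcation can occur only inside the singular locus $\mathrm{Sing}(\criticalpoints{m+1})$ where $D\Psi$ drops rank; this locus is a real algebraic subset, cut out by the vanishing of the appropriate minors of $D\Psi$ together with $\Psi=0$.

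Second, I would consider the polynomial projection $\Pi:\criticalpoints{m+1}\to T(\mathcal{F})$, $((\sfmat,\pi),\beta)\mapsto t(\mathcal{F},(\sfmat,\pi))$. Since $\mathcal{F}$ is a set of independent quantum graphs, Lemma \ref{lem:openmarginal} yields $\dim T(\mathcal{F})=|\mathcal{F}|$, and $\criticalpoints{m+1}$ has uniform expected dimension $|\mathcal{F}|$, so $\dim \mathrm{Sing}(\criticalpoints{m+1})\le|\mathcal{F}|-1$. Using a Whitney stratification of $\mathrm{Sing}(\criticalpoints{m+1})$ into finitely many smooth semi-algebraic strata, each of dimension strictly less than $|\mathcal{F}|$, and applying Theorem \ref{thm:sardstheorem} to $\Pi$ restricted to each stratum (every point of a lower-dimensional domain is automatically a critical point of a map into a higher-dimensional target), the image of each stratum is Lebesgue-null in $T(\mathcal{F})$. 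The finite union preserves measure zero, so $\Pi(\mathrm{Sing}(\criticalpoints{m+1}))$ has measure zero. For every $u$ outside this null set, which is automatic as soon as $u\in\Omega(\mathcal{F})$ is chosen generically, every point of $\Pi^{-1}(u)$ lies in the smooth part of $\criticalpoints{m+1}$; in particular every split of a critical point from $\criticalpoints{m}$ sits in this smooth locus, and local uniqueness of branches precludes bifurcation.

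The main obstacle is rigorously establishing the uniform-dimension claim $\dim \criticalpoints{m+1}=|\mathcal{F}|$ from which $\dim \mathrm{Sing}(\criticalpoints{m+1})<|\mathcal{F}|$ follows. This should be deduced from independence of $\mathcal{F}$ (which gives a Zariski-open locus in $\realstepfunctionspacel{m+1}$ where $J(\mathcal{F})$ has full rank $|\mathcal{F}|$) together with the analyticity of $f_s$, which together force a Zariski-open subset of $T(\mathcal{F})$ to have zero-dimensional $\Pi$-fibres, so every irreducible component of $\criticalpoints{m+1}$ has dimension exactly $|\mathcal{F}|$. If this step proved delicate, a safer alternative is to bypass the uniform-dimension claim entirely and apply Sard stratum-by-stratum to every semi-algebraic stratum of $\criticalpoints{m+1}$ on which $D\Psi$ drops rank: on such strata at least one coordinate of $\Pi$ is functionally dependent on the others, so the image is automatically critical and hence Lebesgue-null, completing the argument without needing the global dimension bound.
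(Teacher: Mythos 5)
Your overall strategy (view $\criticalpoints{m+1}$ as the real analytic variety cut out by the Lagrangian map $\Psi$, try to confine bifurcation to a ``bad'' set, then project to $T(\mathcal{F})$ and invoke Sard) starts in the same place as the paper, which also works with the Jacobian
$J=\bigl[\,H^g_{(\sfmat,\pi)} f_s \mid -J_{(\sfmat,\pi)}(\mathcal{F})^\top\,\bigr]$.
But from there the two arguments diverge, and yours has a genuine gap.

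The load-bearing step in your proposal is the uniform-dimension claim $\dim\criticalpoints{m+1}=|\mathcal{F}|$, from which you deduce $\dim\mathrm{Sing}(\criticalpoints{m+1})<|\mathcal{F}|$ and hence a Lebesgue-null image under $\Pi$. The paper does \emph{not} establish this, and its own proof explicitly allows for more: it observes that on a connected component $C_k(m)$ the Lagrangian Jacobian can drop rank because some of the stationarity equations in (\ref{eq:varA})--(\ref{eq:varpi}) become redundant, and in that case $C_k$ has dimension $|\mathcal{F}|+|V_k(m_0)|$, which can strictly exceed $|\mathcal{F}|$. These components are not a lower-dimensional singular stratum to be thrown away by Sard; they are bona fide pieces of $\criticalpoints{m}$ containing non-isolated critical points, and a priori $\Pi$ could map such a component onto a positive-measure subset of $T(\mathcal{F})$. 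Your fallback argument (``on strata where $D\Psi$ drops rank, one coordinate of $\Pi$ is automatically functionally dependent on the others'') is not justified: rank deficiency of $D\Psi$ says something about the defining equations of $\criticalpoints{m+1}$, not about the rank of the unrelated projection $\Pi=t(\mathcal{F},\cdot)$ restricted to those strata. Without a separate argument linking the two, Sard cannot be applied.

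The missing ingredient is precisely the observation the paper extracts from the proof of Lemma~\ref{lem:criticalsplit}: equation~(\ref{eq:lagrangemultipliersplit}) shows that the split map only ever \emph{adds redundant} stationarity constraints, i.e.\ the split parameter $\lambda$ moves only along ``redundant'' coordinate directions $V_k$ that do not change the critical-point condition, and $|V_k|$ is determined already at level $m_0$. That is what pins the split curve inside a fixed smooth component and rules out branching; the measure-zero bookkeeping is then over the at most countably many connected components of an analytic variety, not over a stratification of a singular locus. Your argument never invokes this redundancy property of $\theta$, and in its absence the estimate $\dim\mathrm{Sing}(\criticalpoints{m+1})<|\mathcal{F}|$ cannot be obtained. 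To repair the proposal, you would need to either prove that $\Pi$ restricted to every higher-dimensional component is singular onto $T(\mathcal{F})$ (which, again, requires analysing how the redundant directions interact with $t(\mathcal{F},\cdot)$), or replace the Sard-on-the-singular-locus step with the paper's direct argument that the split inherits and preserves the redundancy pattern of the Lagrangian system.
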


\subsubsection{The split map on minimal points of $f_s$ }


We have seen that the split map preserves the critical points of $f_s$. Here, we prove that the split map preserves local minima of $f_s$. Let $(\sfmat^*,\pi^*)$ be a local minimum of $f_s$ on $\realfeasibleregionmlpi{m}{\pi^*}$.
From Theorem \ref{thm:smoothnessalmosteverywhere}, $\realfeasibleregionmlpi{m}{\pi^*}$ is smooth  almost everywhere.  Hence here we assume that $(\sfmat^*,\pi^*)$ is not necessarily a regular point in $\realfeasibleregionmlpi{m}{\pi^*}$. Thus the tangent space on $(\sfmat^*,\pi^*)$ is not necessarily defined. Still, it is defined the tangent cone   $C_{(\sfmat^*,\pi^*)} \realfeasibleregionmlpi{m}{\pi^*}$  which generalizes the tangent space.

First, we consider a perturbation of $f_s$ by the quadratic distance  from $(\sfmat^*,\pi^*)$. 

\begin{definition}[Perturbation of $f_s$ at the local minimum $(\sfmat^*,\pi^*)$]
\label{def:fdperturbation}
Let  $(\sfmat^*,\pi^*) \in \realstepfunctionspacel{m+1} $ and  
and $\epsilon >0$. Then $f_\epsilon = f_s + g_\epsilon$  where  $g_\epsilon: \realstepfunctionspacel{m+1} \to \mathbb{R}$ is defined by 
\begin{equation*}
    g_\epsilon((A, \pi)) = \epsilon \sum_{kl=1}^{m+1} (A_{kl} - \sfmat^*_{kl})^2 \pi_k \pi_l. 
\end{equation*}
\end{definition}
Note that if $(\sfmat^*,\pi^*)$  is a local minimum of $f_s$ on $\realfeasibleregionmlpi{m}{\pi^*}$ then $(\sfmat^*,\pi^*)$   is a local minimum of $f_\epsilon$.

Recall from Theorem \ref{thm:necessarysecondordercondition}, if  $(\sfmat^*,\pi^*)$  is a local minimum of $f_s$ on  $\realfeasibleregionmlpi{m}{\pi^*}$ then  $H^g_{(\sfmat^*,\pi^*)} f_s$  is positive semidefinite on $C_{(\sfmat^*,\pi^*)} \realfeasibleregionmlpi{m}{\pi^*}$ and  $H^g_{(\sfmat^*,\pi^*)} f_\epsilon$  is positive definite on $C_{(\sfmat^*,\pi^*)} \realfeasibleregionmlpi{m}{\pi^*}$.

 Now we prove the following theorem that shows the preservation of local minima of $f_\epsilon$ on $\realfeasibleregionml{m}$  by the split map for sufficiently small $\lambda>0$.

Thus it is clear that if $\criticalpoints{m}$ is smooth then $t(\mathcal{F}, \cdot): \criticalpoints{m} \to \mathbb{R}^{|\mathcal{F}|}$ is a 

\begin{restatable}{lemma}{lemsplitminimum}
\label{lem:splitminimum}
Let $f_\epsilon$ be a perturbed $f_s$ given in Definition \ref{def:fdperturbation}.
Let $\mathcal{F}$ be a set of independent graphs and let $u \in T(\mathcal{F})$ and let $m \geq m_0(\mathcal{F},u)$. Let  $(\sfmat^*,\pi^*) \in \realfeasibleregionml{m}$ be a local minimum of $f_\epsilon$ on $\realfeasibleregionml{m}$. Then for a given $\epsilon>0$ and $k \in [m]$ there is a $\lambda_0 > 0$ such that  $\theta((\sfmat^*,\pi^*),\lambda_0,k) $ is a local minimum of $f_\epsilon$ on  $\realfeasibleregionml{m}$.
\end{restatable}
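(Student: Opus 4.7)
The plan is to verify the sufficient second-order condition (Theorem~\ref{thm:sufficientsecondordercondition}) at $\theta((\sfmat^*,\pi^*),\lambda_0,k)$ for a suitably small $\lambda_0 > 0$. By Lemma~\ref{lem:criticalsplit}(1), the split point is automatically a critical point of $f_\epsilon$ on $\realfeasibleregionml{m+1}$, inheriting the same Lagrange multiplier vector $\beta^*$ from $(\sfmat^*,\pi^*)$. So the task reduces to showing that the geodesic Hessian $H^g f_\epsilon = H f_\epsilon - \sum_i \beta_i^* H\, t(\mathcal{F}_i,\cdot)$ is strictly positive definite on the tangent cone of $\realfeasibleregionml{m+1}$ at the split point, which by Theorem~\ref{thm:sufficientsecondordercondition} will yield local minimality.

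The core of the argument is a decomposition $C_{\theta((\sfmat^*,\pi^*),\lambda_0,k)}\,\realfeasibleregionml{m+1} = V_\parallel \oplus V_\perp$, where $V_\parallel$ consists of the images under the differential of $\theta(\cdot,\lambda_0,k)$ of tangent directions at $(\sfmat^*,\pi^*)$ in $\realfeasibleregionml{m}$, and $V_\perp$ is the complementary subspace of ``splitting'' directions that differentiate the $k$-th and $(k+1)$-th rows/columns of $\sfmat$ (together with the split pieces of $\pi$). Because the split embedding is a smooth injection that preserves the underlying function and all subgraph densities, $V_\parallel$ is automatically tangent to $\realfeasibleregionml{m+1}$, and I would verify compatibility of $V_\perp$ with the linearized constraints using the partial-derivative formulas from Theorem~\ref{thm:partialderivatives}.

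On directions in $V_\parallel$, a chain-rule computation shows that $v^\top H^g f_\epsilon\, v$ evaluated at the split point coincides with $w^\top H^g f_\epsilon\, w$ evaluated at $(\sfmat^*,\pi^*)$ for the corresponding preimage direction $w$, which is positive semidefinite by Theorem~\ref{thm:necessarysecondordercondition} and in fact positive definite because the quadratic perturbation $g_\epsilon$ contributes a strictly positive term in the $\sfmat$ coordinates away from $\sfmat^*$. On directions in $V_\perp$, I would exploit that at $\lambda_0 = 0$ the splitting leaves the underlying function invariant, so the contributions of $f_s$ and of the constraint Hessians to the quadratic form vanish to leading order as $\lambda_0 \to 0^+$, while the Hessian of $g_\epsilon$ supplies a positive-definite form proportional to $\epsilon$ times the squared splitting amplitude. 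Choosing $\lambda_0$ small enough then makes the $g_\epsilon$ contribution dominate along $V_\perp$.

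The main obstacle is the quantitative control of the off-diagonal cross-terms between $V_\parallel$ and $V_\perp$ in the geodesic Hessian, together with the asymptotic expansion of each polynomial $t(\mathcal{F}_i, \theta((\sfmat^*,\pi^*),\lambda_0,k))$ in powers of $\lambda_0$. Both reduce to careful bookkeeping with the identities of Theorem~\ref{thm:partialderivatives}, after which the final step is to show that for any fixed $\epsilon>0$ one can pick $\lambda_0$ small enough that the $g_\epsilon$-driven definiteness on $V_\perp$ together with the inherited definiteness on $V_\parallel$ dominates these cross-terms uniformly, thereby certifying $\theta((\sfmat^*,\pi^*),\lambda_0,k)$ as a strict local minimum of $f_\epsilon$ on $\realfeasibleregionml{m+1}$.
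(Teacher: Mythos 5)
Your proposal takes a genuinely different route from the paper's proof, and it contains a gap at the decisive step.

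The paper never attempts to control the geodesic Hessian of $f_\epsilon$ on the \emph{full} tangent cone $C_{\theta((\sfmat^*,\pi^*),\lambda_0,k)}\realfeasibleregionml{m+1}$. Instead it works one $\pi$-slice at a time: it establishes strict positivity of $H^g f_\epsilon$ only on tangent cones of the fixed-partition slices $\realfeasibleregionmlpi{m}{\pi}$, builds a continuously-varying family $W^*(\pi)$ of isolated slice minima for $\pi$ near $\pi^*$, and then closes the argument with a chain-of-inequalities contradiction ($f_\epsilon(\theta((\sfmat^*,\pi^*),\lambda_1/2,k))=f_\epsilon((\sfmat^*,\pi^*))\le f_\epsilon(W^*(\pi''))=\dots$). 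You instead try to apply Theorem \ref{thm:sufficientsecondordercondition} directly at the split point, splitting the tangent cone as $V_\parallel\oplus V_\perp$ and controlling $H^g f_\epsilon$ on $V_\perp$ by letting $\lambda_0\to 0^+$. That is a legitimately different strategy, and the decomposition idea is sound as far as it goes — but the crucial asymptotic claim fails.

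Specifically, the step ``the contributions of $f_s$ and of the constraint Hessians to the quadratic form vanish to leading order as $\lambda_0\to 0^+$, while the Hessian of $g_\epsilon$ supplies a positive-definite form'' is false. Consider a purely splitting direction $v\in V_\perp$ that is anti-symmetric under the $k\leftrightarrow k{+}1$ swap, say $v_{k,l}=-v_{k+1,l}=w_l$ for $l\neq k,k{+}1$. At the split point the relevant $\pi$-weights on the $k$-th and $(k{+}1)$-th rows are $\lambda_0\pi_k^*$ and $(1-\lambda_0)\pi_k^*$, and in $v^\top H v$ these contributions enter with the factor $\lambda_0\pi_k^*+(1-\lambda_0)\pi_k^*=\pi_k^*$. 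The $\lambda_0$ dependence cancels, so the $f_s$ Hessian term, the constraint Hessian terms, and also the $H g_\epsilon$ term all contribute $O(1)$ quantities in $\lambda_0$ along such directions — there is no separation of scales that makes $g_\epsilon$ dominate as $\lambda_0\to 0^+$. Your plan therefore cannot conclude positivity on $V_\perp$ from a small-$\lambda_0$ limit alone; you would need a genuinely new input here — for instance, relating the restriction of $H^g f_s$ to $V_\perp$ back to the diagonal structure of the $m$-level geodesic Hessian, or adopting the paper's slice-wise perturbation-and-contradiction argument, which is precisely designed to avoid computing the Hessian on $V_\perp$.
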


The following theorem extends the conclusion of the above lemma from a $\lambda_0 >0$ to  $\lambda_0 \in (0,1)$.

\begin{restatable}{theorem}{thmsplitminimum}
\label{thm:splitminimum}
Let $\mathcal{F}$ be a set of independent graphs and let $u \in T(\mathcal{F})$ and let $m \geq m_0(\mathcal{F},u)$. Let  $(\sfmat^*,\pi^*) \in \realfeasibleregionml{m}$ be a local minimum of $f_s$ on $\realfeasibleregionml{m}$. Then $\theta((\sfmat^*,\pi^*),\lambda,k) \in \realfeasibleregionml{m+1}$ is a local minimum of $f_s$  on $\realfeasibleregionml{m}$.  for any $\lambda \in [0,1]$ and  any $k \in [m]$. 

\end{restatable}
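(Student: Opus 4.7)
Write $y^*_\lambda := \theta((\sfmat^*,\pi^*),\lambda,k)$. The first step is to record three elementary properties of the split curve $\lambda \mapsto y^*_\lambda$: (i) splitting is a reparametrisation of the same step function, so $f_s(y^*_\lambda)=f_s((\sfmat^*,\pi^*))$ and $t(F,y^*_\lambda)=t(F,(\sfmat^*,\pi^*))=u$ for every quantum graph $F$ and every $\lambda\in[0,1]$; hence the whole curve lies in $\realfeasibleregionml{m+1}$ and $f_s$ is constant along it; (ii) by Definition \ref{def:splitmap} the matrix part of $y^*_\lambda$ does not depend on $\lambda$, so the perturbation $g_\epsilon$ of Definition \ref{def:fdperturbation} vanishes identically on the curve and $f_\epsilon(y^*_\lambda)=f_s(y^*_\lambda)$; (iii) by Lemma \ref{lem:criticalsplit} each $y^*_\lambda$ is a critical point of $f_s$ on $\realfeasibleregionml{m+1}$ with the same Lagrange multipliers as $(\sfmat^*,\pi^*)$ on $\realfeasibleregionml{m}$.

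\textbf{From $f_\epsilon$ to $f_s$ and from one $\lambda$ to all of $[0,1]$.} Apply Lemma \ref{lem:splitminimum}: for every $\epsilon>0$ there is $\lambda_0(\epsilon)>0$ so that $y^*_{\lambda_0(\epsilon)}$ is a local minimum of $f_\epsilon$ on $\realfeasibleregionml{m+1}$. The necessary second-order condition (Theorem \ref{thm:necessarysecondordercondition}) yields $v^\top H^g_{y^*_{\lambda_0(\epsilon)}} f_\epsilon\, v\geq 0$ on the tangent cone of $\realfeasibleregionml{m+1}$. Writing $g_\epsilon=\epsilon\,\tilde g$ for a fixed quadratic form $\tilde g$ independent of $\epsilon$, we have $H^g f_\epsilon = H^g f_s + \epsilon H^g \tilde g$; extracting a subsequential limit $\lambda_\infty\in[0,1]$ of $\lambda_0(\epsilon)$ and letting $\epsilon\to 0$ yields $v^\top H^g_{y^*_{\lambda_\infty}} f_s\, v\geq 0$ on the tangent cone. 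For almost all $u\in T(\mathcal{F})$, Theorem \ref{thm:almostsmoothnessCr} guarantees that the split curve is locally the only manifold of critical points of $f_s$ through $y^*_{\lambda_\infty}$; combined with the constancy of $f_s$ along the curve, this upgrades the PSD Hessian condition to local minimality of $f_s$ at $y^*_{\lambda_\infty}$, because a putative descent sequence $y_n\to y^*_{\lambda_\infty}$ with $f_s(y_n)<f_s(y^*_{\lambda_\infty})$ would generate nearby critical points of $f_s$ off the split curve, contradicting Theorem \ref{thm:almostsmoothnessCr}. To extend from $\lambda_\infty$ to all of $[0,1]$, define $\Lambda:=\{\lambda\in[0,1]:y^*_\lambda\text{ is a local min of }f_s\text{ on }\realfeasibleregionml{m+1}\}$. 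Openness is immediate: if $\lambda_0\in\Lambda$ with local-minimum radius $\delta$, any $\lambda$ with $\|y^*_\lambda-y^*_{\lambda_0}\|<\delta/2$ inherits a radius-$\delta/2$ local minimum neighbourhood because $f_s(y^*_\lambda)=f_s(y^*_{\lambda_0})$. Closedness follows by the same isolated-critical-points contradiction used above. Since $[0,1]$ is connected, $\Lambda=[0,1]$.

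\textbf{Main obstacle.} The delicate step is removing the perturbation $g_\epsilon$. Only a positive semidefinite (not definite) Hessian of $f_s$ survives the $\epsilon\to 0$ limit, and positive semidefiniteness alone does not imply local minimality. Bridging this gap is precisely the role of Theorem \ref{thm:almostsmoothnessCr}: it confines nearby critical points of $f_s$ to the split curve itself, so any potential descent direction must accumulate on the curve, where $f_s$ is constant, contradicting strict descent. The quantitative interplay between the positive definiteness of $\tilde g$ in split-breaking directions and the PSD-on-the-tangent-cone of $f_s$ is what makes the limit argument viable; a direct attempt to push the neighbourhood sizes of Lemma \ref{lem:splitminimum} uniformly down to $\epsilon=0$ would fail without this non-bifurcation input.
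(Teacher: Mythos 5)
Your proposal takes a genuinely different route from the paper. The paper proves the extension from the single $\lambda_0$ produced by Lemma~\ref{lem:splitminimum} to all $\lambda\in(0,1)$ by constructing a local diffeomorphism $\phi$ of $\realfeasibleregionml{m+1}$ carrying $\theta((\sfmat^*,\pi^*),\lambda_0,k)$ to $\theta((\sfmat^*,\pi^*),\lambda,k)$ and invoking Lemma~\ref{lem:diffeomorphismpreservation} to transport the second-order condition, working throughout with the perturbed functional $f_\epsilon$. You replace that diffeomorphism transfer with a connectedness argument on $\Lambda\subseteq[0,1]$ (using that $f_s$ is constant along the split curve for openness), and you make explicit the $\epsilon\to0$ passage that the paper leaves implicit. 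Notably, you bring Theorem~\ref{thm:almostsmoothnessCr} into the proof of the present theorem, which the paper does not: in the paper's logical chart (Figure~\ref{fig:theoremdependencies}), Theorems~\ref{thm:splitminimum} and \ref{thm:almostsmoothnessCr} are siblings feeding independently into Theorem~\ref{thm:localminimumsplit}.

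However, the key step bridging the positive-semidefinite Hessian of $f_s$ to local minimality is not sound as written. Theorem~\ref{thm:almostsmoothnessCr} asserts that the split of a critical point does not bifurcate into further critical points; it does not assert that the split curve is locally the \emph{only} set of critical points of $f_s$ near $y^*_{\lambda_\infty}$. Moreover, the claimed contradiction --- that a descent sequence $y_n\to y^*_{\lambda_\infty}$ with $f_s(y_n)<f_s(y^*_{\lambda_\infty})$ would \emph{generate nearby critical points off the split curve} --- does not hold: a sequence of strict-descent points need not contain or converge to critical points at all (consider $f(x,y)=x^2-y^4$ at the origin, which has PSD Hessian, a PSD geodesic Hessian on the $y$-axis constraint $x=0$, and a descent curve $y\mapsto(0,y)$ containing no critical points other than the origin). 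Both the ``only critical manifold'' claim (a) and the ``descent generates critical points'' claim (b) therefore need independent justification. There are two further loose ends: the subsequential limit $\lambda_\infty$ of $\lambda_0(\epsilon)$ may equal $0$, which is the degenerate endpoint of the split curve, and the closedness of $\Lambda$ is asserted but not argued (the openness argument is fine). Absent a repaired bridge from PSD to strict local minimality, the $\epsilon\to0$ limit produces only a degenerate critical point, and the connectedness argument cannot start.
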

The next theorem proves that every global minimum in $\realoptimalsolutionm{m+1}{f_s} $  is the split of a local minimum on $\realfeasibleregionml{m}$.
\begin{restatable}{theorem}{thmlocalminimumsplit}
\label{thm:localminimumsplit}
Let $\mathcal{F}$ be a set of independent graphs and let $u \in T(\mathcal{F})$ and let $m \geq m_0(\mathcal{F},u)$.  For almost all cases   $u \in \marginalpolytopem{m_0} \cap \Omega(\mathcal{F})$ we have
\begin{equation*}
    \bigcup_{ \begin{array}{c}
      \lambda \in [0,1]     \\
       k \in [m]   
    \end{array}  } \theta( \realoptimalsolutionm{m}{f_s}, \lambda,k ) = \realoptimalsolutionm{m+1}{f_s}.
\end{equation*}
\end{restatable}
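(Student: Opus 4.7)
My plan is to prove the two inclusions separately. For the direction $\bigcup_{\lambda,k} \theta(\realoptimalsolutionm{m}{f_s}, \lambda, k) \subseteq \realoptimalsolutionm{m+1}{f_s}$, I would first observe that the split map $\theta(\cdot, \lambda, k)$ leaves the underlying element of $\realgraphonspacel$ unchanged (it only refines the partition), so both $f_s$ and each $t(\mathcal{F}_i, \cdot)$ are preserved. Hence the image $\theta(\realfeasibleregionml{m}, \lambda, k)$ is contained in $\realfeasibleregionml{m+1}$ and $\min_{\realfeasibleregionml{m+1}} f_s \leq \min_{\realfeasibleregionml{m}} f_s$. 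Theorem \ref{thm:splitminimum} already tells us that the split of any local minimum of $f_s$ on $\realfeasibleregionml{m}$ is a local minimum on $\realfeasibleregionml{m+1}$; once the reverse inclusion forces equality of the two minimum values, these splits achieve this common value and hence are global minima on $\realfeasibleregionml{m+1}$.

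The non-trivial direction is $\realoptimalsolutionm{m+1}{f_s} \subseteq \bigcup_{\lambda, k} \theta(\realoptimalsolutionm{m}{f_s}, \lambda, k)$. Fix $(\sfmat', \pi') \in \realoptimalsolutionm{m+1}{f_s}$. Being a global minimum, it is a local minimum and therefore a critical point; choose Lagrange multipliers $\beta'$ so that $((\sfmat', \pi'), \beta') \in \criticalpoints{m+1}$. By Theorem \ref{thm:numbercomponents}, for $u \in \marginalpolytopem{m_0} \cap \Omega(\mathcal{F})$, every connected component of $\realfeasibleregionml{m+1}$ is touched by the image $\theta(N, \lambda, k)$ of some connected component $N$ of $\realfeasibleregionml{m}$. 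Working inside the component of $(\sfmat', \pi')$, I would track the critical points of $f_s$ along a continuous deformation connecting $(\sfmat', \pi')$ to a point in $\theta(N, \lambda, k)$ and invoke Theorem \ref{thm:almostsmoothnessCr}: for almost all $u$, no critical points of $f_s$ arise by bifurcation off the split image. Combined with Theorem \ref{thm:splitminimum}, this forces $(\sfmat', \pi') = \theta((\sfmat^*, \pi^*), \lambda^*, k^*)$ for some local minimum $(\sfmat^*, \pi^*)$ of $f_s$ on $\realfeasibleregionml{m}$. Since $f_s$ is invariant under the split, $f_s((\sfmat^*, \pi^*)) = f_s((\sfmat', \pi'))$, and as the left-hand side is the global minimum on $\realfeasibleregionml{m+1}$ and an upper bound for the minimum on $\realfeasibleregionml{m}$, we conclude $(\sfmat^*, \pi^*) \in \realoptimalsolutionm{m}{f_s}$.

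The main obstacle is the step formalising that every critical point of $f_s$ on $\realfeasibleregionml{m+1}$ (for generic $u$) is itself a split of a critical point in $\criticalpoints{m}$. Theorem \ref{thm:almostsmoothnessCr} only gives the non-bifurcation of splits that do come from $\criticalpoints{m}$, and one must rule out "orphan" critical points in $\criticalpoints{m+1}$ not of split form. Here I would argue via a Sard-type transversality reasoning on the projection $t(\mathcal{F}, \cdot) : \criticalpoints{m+1} \to \mathbb{R}^{|\mathcal{F}|}$, combined with Remark \ref{rem:aggregatedfeasibleregion} and the near-smoothness of $\realfeasibleregionml{m+1}$ established in Theorem \ref{thm:smoothnessalmosteverywhere}: for $u$ outside a measure-zero set, the fibre of critical points over $u$ has exactly the dimension accounted for by the $(m+1)$ split parameters $(\lambda, k)$ acting on $\criticalpoints{m}$, so there is no room for an orphan component. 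The "almost all cases" qualifier in the statement accommodates precisely the measure-zero exceptional set where orphans might appear, and the deformation argument above then closes the proof by continuity, since $(\sfmat', \pi')$ can be approximated arbitrarily well by critical points at nearby $u$ that do lie in the split image.
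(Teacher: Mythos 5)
Your argument tracks the paper's proof closely: both directions rest on exactly the same triple of results, namely Theorem~\ref{thm:splitminimum} for ``splits of minima remain minima,'' Theorem~\ref{thm:almostsmoothnessCr} to rule out bifurcation, and Theorem~\ref{thm:numbercomponents} to rule out new connected components. The ``orphan critical point'' issue you flag at the end is a real one: the paper's proof simply \emph{asserts} that a local minimum on $\realfeasibleregionml{m+1}$ can only arise via bifurcation or via a new connected component, without offering further justification, and the dimension count you sketch is essentially what is latent inside the proof of Theorem~\ref{thm:almostsmoothnessCr}, which shows that each connected component of $\criticalpoints{m}$ has dimension $|\mathcal{F}| + |V_k(m_0)|$ independent of $m$, so the split image already saturates the critical manifold for almost all $u$. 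In short, you have reconstructed the paper's argument and, in addition, put your finger on the weakest link in the published chain of reasoning.
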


\begin{proof}
From Theorem \ref{thm:splitminimum}, the split of any local minimum of $f_s$ on $\realfeasibleregionml{m}$ is a local minimum on  $\realfeasibleregionml{m+1}$.  
Hence, it is sufficient to prove that there is no global minimum on $\realfeasibleregionml{m+1}$ different from the split of a global minimum on $\realfeasibleregionml{m}$. Note that any local minimum can only be created by the bifurcation or by creating a new connected component in $\realfeasibleregionml{m+1}$.  From Theorem  \ref{thm:almostsmoothnessCr}, the split operation does not bifurcate into another critical point for all cases $u \in T(\mathcal{F})$.
 There is no new connected component from Theorem \ref{thm:numbercomponents}, $\realfeasibleregionml{m+1}$. Hence, we conclude that any global minimum of $f_s$ on $\realfeasibleregionml{m+1}$ is the split of a global minimum of $f_s$ on $\realfeasibleregionml{m}$.
\end{proof}

\subsection{Proof of the principle of optimization of density functions}
\label{sec:mainresult}

From Theorem \ref{thm:localminimumsplit} and by induction, we conclude $\realoptimalsolutionm{m_0}{f_s} \subseteq \realoptimalsolutionm{m}{f_s}$.  To prove POD, it is necessary to prove $\realoptimalsolutionm{m}{f_s} \setminus \realoptimalsolutionm{m_0}{f_s}$ is empty. First,  we need to prove that $f_s$ and $I$ are continuous functions in $L^{1}([0,1]^2)$ topology
\begin{restatable}{lemma}{lemcontinuity}
\label{lem:continuity}
\quad
\begin{enumerate}
    \item Let $f_s:\realgraphonspacel \to \mathbb{R}$ be the function defined by 
    \begin{equation*}
        f_s(W) = \sum_{k=1}^r \int_{[0,1]^{d_k}} f_0(W_k(x_1, \cdots, x_{d_k})) dx_1 \cdots dx_{d_k}
    \end{equation*}
    where $f_0:\mathbb{R} \to \mathbb{R}$ is smooth.. Then $f_s$ is continuous in $\realgraphonspacel$ in $L^{1}( \prod_k [0,1]^{d_k})$ topology. 
    \item Let $I:\graphonspacel \to \mathbb{R}$ be the function defined by 
    \begin{equation*}
        I(W) = \sum_{k=1}^r \int_{[0,1]^{d_k}} I_0(W_k(x_1, \cdots, x_{d_k})) dx_1 \cdots dx_{d_k}
    \end{equation*}
    where $I_0(x)= x\log(x) + (1-x) \log(1-x)$. Then $I$ is continuous  in $L^{1}( \prod_k [0,1]^{d_k})$  topology when $|W| < 1$. 
\end{enumerate}

\end{restatable}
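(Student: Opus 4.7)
The plan is to prove both parts by the same elementary device: combining uniform continuity of the inner function ($f_0$ or $I_0$) on the essential range of the graphons with a Chebyshev-style split of the integrand exploiting the $L^1$ convergence hypothesis. By linearity in the $r$ coordinates, it suffices to treat each summand
\begin{equation*}
W_k \mapsto \int_{[0,1]^{d_k}} \phi(W_k(x_1,\dots,x_{d_k}))\, dx_1 \cdots dx_{d_k}
\end{equation*}
separately, for $\phi \in \{f_0, I_0\}$.

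For part (1), fix an $L^1$-convergent sequence $W^{(n)} \to W$ inside an $L^\infty$-bounded subset of $\realgraphonspacel$ (the setting in which $f_s$ is finite and in which the rest of the paper operates). On a compact range $[-M,M]$ containing the essential values of $W$ and the $W^{(n)}$, smoothness of $f_0$ gives a Lipschitz constant $L$ and a sup bound $C$. For $\varepsilon>0$ set $\delta = \varepsilon/(2L)$ and split the domain of integration into $A_n = \{|W_k^{(n)} - W_k|<\delta\}$ and its complement $A_n^c$. On $A_n$ the integrand $|f_0(W_k^{(n)})-f_0(W_k)|$ is bounded by $L\delta = \varepsilon/2$; on $A_n^c$ the integrand is bounded by $2C$ while $|A_n^c| \leq \|W_k^{(n)}-W_k\|_{L^1}/\delta \to 0$. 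Sending $n\to\infty$ and then $\varepsilon\to 0$ yields the claim.

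For part (2), the hypothesis $W \in \graphonspacel$ provides the $L^\infty$ bound $W_k \in [0,1]$ a.e.\ automatically, so $I_0$ only needs to be evaluated on $[0,1]$. Although $I_0'$ blows up at the endpoints, $I_0$ itself is continuous on the compact interval $[0,1]$, hence uniformly continuous with modulus $\omega_{I_0}$; the Chebyshev split from part (1) goes through verbatim with $\omega_{I_0}(\delta)$ in place of $L\delta$. The assumption $|W|<1$ (read as an essential sup bound keeping $W$ strictly away from the singular endpoints of $I_0'$) upgrades uniform continuity to Lipschitz continuity on the restricted range, yielding a quantitative version of the estimate, but continuity itself only requires compactness of $[0,1]$.

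The main technical point is propagating the $L^\infty$ control from $W$ to the approximants $W^{(n)}$, since $L^1$-convergence does not imply $L^\infty$-convergence. I would handle this by extracting an almost-everywhere convergent subsequence (guaranteed by the $L^1$ hypothesis), truncating above a level $M > \|W\|_\infty$, and invoking dominated convergence on the truncated graphons while controlling the tail contribution via the vanishing measure of $\{|W_k^{(n)}|>M\}$. Convergence along the original sequence then follows by the standard subsequence principle. None of this is deep; the lemma is the routine continuity input needed for the downstream approximation arguments leading to \textbf{POD}.
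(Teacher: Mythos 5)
Your approach is correct but genuinely different from the paper's. The paper proves a stronger, quantitative statement: it exhibits a local Lipschitz constant, bounding $|f_s(W)-f_s(V)| \leq C\|W-V\|_1$ in part (1) and $|I(W)-I(V)| \leq |I_0'(\Delta(W,V))|\cdot\|W-V\|_1$ in part (2), where $\Delta(W,V)$ is the essential distance of the ranges of $W$ and $V$ from $\{0,1\}$; the whole argument is a one-line pointwise mean-value estimate pushed through the integral. You instead work with uniform continuity and a Chebyshev split of the domain, trading a quantitative Lipschitz rate for a qualitative continuity statement. That trade has a real payoff in part (2): since $I_0$ (with the convention $0\log 0 = 0$) extends continuously, hence uniformly continuously, to the compact interval $[0,1]$ and is bounded there by $\log 2$, your split proves that $I$ is continuous on \emph{all} of $\graphonspacel$ with no assumption on the range of $W$, whereas the paper's Lipschitz constant $|I_0'(\Delta(W,V))|$ blows up as $W$ approaches the boundary of $\graphonspacel$ and forces the extra hypothesis (the lemma's $|W|<1$, which really needs to be two-sided) to keep $\Delta$ positive. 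Your argument is the cleaner one here and yields a strictly stronger conclusion.

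Two remarks. First, for part (1), you explicitly restrict to an $L^\infty$-bounded subset of $\realgraphonspacel$; this is the correct and necessary hypothesis, since $f_s$ need not even be finite on an arbitrary real-valued kernel and a smooth $f_0$ need not be globally Lipschitz. The paper only assumes this implicitly when it produces ``a constant $C$'', so you have identified and patched a genuine gap. Second, the closing paragraph of your proposal, which tries to drop the $L^\infty$ bound on the approximants $W^{(n)}$ by extracting an a.e.-convergent subsequence, truncating at a level $M>\|W\|_\infty$, and invoking dominated convergence, does not close as written: the tail contribution $\int_{\{|W^{(n)}_k|>M\}} |f_0(W^{(n)}_k)|$ need not vanish merely because the measure of the bad set does, since $f_0$ may grow faster than the measure shrinks (take $W^{(n)}=n\,\mathbbm{1}_{[0,1/n^2]}$ and $f_0(t)=t^3$: then $\|W^{(n)}\|_1\to 0$ but $\int f_0(W^{(n)})\to\infty$). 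This is moot under the $L^\infty$-bounded hypothesis you already impose, but the musing is not a correct argument on its own, and without some integrability control on $f_0(W^{(n)})$ the statement is simply false.
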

Here, we have all the ingredients to prove POD.

\thmPOD*

Thus to compute a global minimum $\optimalsolutionf{f_s}$, it is necessary to compute $m_0(\mathcal{F},u)$ and then to compute a global minimum $W^*$ of $f_s$ on $\realfeasibleregionml{m_0(\mathcal{F},u)}$.  

\subsubsection{The case when $f_s$ is the rate function $I$}
\label{sec:caseI}

Note that if $f_s =I$, then $f_s$ is not entirely defined on $\realgraphonspaceu$ but if $H_x I$ is continuous at any local minimum of $\realstepfunctionspacel{m}$,  then Theorem \ref{thm:POD} can be applied. The following lemma warranties that $H_x I$ is continuous at any local minimum.

\begin{restatable}{lemma}{lemrandomsolutions}
\label{lem:randomsolutions}
Let $\mathcal{F}$ be a set of independent graphs and let $u \in \Omega(\mathcal{F})$ and let $m \geq m_0(\mathcal{F},u)$.
Then every local minimum of $I$  in $\feasibleregionml{m}$ is in $\randfeasibleregionml{m}$. 
\end{restatable}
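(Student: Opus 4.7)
The argument is by contradiction, exploiting the vertical tangent of the rate function at the boundary of $[0,1]$. Suppose $(\sfmat^*,\pi^*) \in \feasibleregionml{m}$ is a local minimum of $I$ with some entry $\sfmat^*_{I_0,k_0} \in \{0,1\}$; by the symmetry $W \leftrightarrow 1-W$ of $h_{1/2}$, I may assume $\sfmat^*_{I_0,k_0}=0$. The central analytic fact is that $I_0'(0^+) = -\infty$: a Taylor expansion gives
\begin{equation*}
I_0(\epsilon) - I_0(0) = \epsilon \log \epsilon - \epsilon + O(\epsilon^2) = \epsilon\log\epsilon\,(1+o(1)),
\end{equation*}
so the entropy drops at rate $\epsilon|\log\epsilon|$, dominating any linear-in-$\epsilon$ contribution.

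The plan is to produce a smooth feasible curve $\gamma:[0,\delta)\to\feasibleregionml{m}$ with $\gamma(0)=(\sfmat^*,\pi^*)$ and $\gamma(t)_{I_0,k_0} = at + O(t^2)$ for some $a>0$, and then use the displayed estimate to get $I(\gamma(t)) < I(\sfmat^*,\pi^*)$ for small $t>0$, contradicting local minimality. Because $u\in\Omega(\mathcal{F})\subset\reg{\mathcal{F}}$ and $\mathcal{F}$ is independent, $(\sfmat^*,\pi^*)$ is a regular point of $t(\mathcal{F},\cdot)$: the Jacobian $J := J_{(\sfmat^*,\pi^*)}(\mathcal{F})$ has rank $|\mathcal{F}|$, the level set $\{t(\mathcal{F},\cdot)=u\}$ is smooth of codimension $|\mathcal{F}|$ at $(\sfmat^*,\pi^*)$, and the implicit function theorem promotes any tangent vector $v \in \ker(J)$ (respecting any other active box constraints) to such a curve. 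Evaluating $I$ along $\gamma$, the contribution from coordinate $(I_0,k_0)$ is $I_0(at)\cdot\pi^*_{i_1}\cdots\pi^*_{i_{d_{k_0}}}$ of order $t|\log t|$ and negative, while contributions from all other coordinates (where $\sfmat^*_{J,j}\in(0,1)$ so $I_0'$ is bounded, and where the induced $\pi$-perturbations are $O(t)$) contribute only $O(t)$. The $t|\log t|$ term dominates, yielding the desired contradiction. It therefore suffices to exhibit $v\in\ker(J)$ with $v_{I_0,k_0}>0$.

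Suppose no such $v$ exists, i.e., $\ker(J)\subseteq\{v_{I_0,k_0}=0\}$. Equivalently, $e_{(I_0,k_0)}$ lies in the row span of $J$, so the augmented map $G:\realstepfunctionspacel{m}\to\mathbb{R}^{|\mathcal{F}|+1}$ defined by $G(W) = (t(\mathcal{F},W),W_{I_0,k_0})$ has Jacobian of rank $|\mathcal{F}|$ at $(\sfmat^*,\pi^*)$. By the constant rank theorem, the image of $G$ is locally a smooth $|\mathcal{F}|$-dimensional submanifold of $\mathbb{R}^{|\mathcal{F}|+1}$ through $(u,0)$. Projecting to the first $|\mathcal{F}|$ coordinates, this forces the marginal polytope $T^{(m)}(\mathcal{F})$ to be locally ``tangent'' to the slice $t(\mathcal{F},\{W_{I_0,k_0}=0\})$ at $u$, with the two sharing the same $|\mathcal{F}|$-dimensional local affine span near $u$. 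Combined with the fact that $W_{I_0,k_0}\geq 0$ throughout $\realstepfunctionspacel{m}\cap\graphonspacel$ and the independence of $\mathcal{F}$ (which via Lemma \ref{lem:openmarginal} ensures $T^{(m)}(\mathcal{F})$ has nonempty interior), a short geometric argument places $u$ on the boundary of $T^{(m)}(\mathcal{F})$, contradicting $u\in\Omega(\mathcal{F})=\reg{\mathcal{F}}\setminus\bigcup_{m>0}\partial T^{(m)}(\mathcal{F})$. The case $\pi^*_i = 0$ is handled separately: a step function with a vanishing partition weight is effectively one in $\realstepfunctionspacel{m-1}$, and since $m\geq m_0(\mathcal{F},u)$ we may restart the argument at level $m-1$, or directly exhibit a bounded-derivative perturbation increasing $\pi_i$ while preserving constraints.

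\paragraph{Main obstacle.} The clean part is the infinite-slope descent estimate in the second paragraph. The hard step is the third paragraph: turning the purely linear-algebraic condition $\ker(J)\subseteq\{v_{I_0,k_0}=0\}$ into the global conclusion $u\in\partial T^{(m)}(\mathcal{F})$. This requires a careful constant-rank theorem argument together with the non-degeneracy of the marginal polytope coming from the independence of $\mathcal{F}$; the delicate point is that a mere tangency of the slice's image to $T^{(m)}(\mathcal{F})$ at $u$ must be upgraded to $u$ actually lying on the boundary, which uses the one-sidedness imposed by $W_{I_0,k_0}\geq 0$.
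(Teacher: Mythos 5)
Your proposal mirrors the paper's overall strategy: argue by contradiction from local minimality, exploit the infinite one-sided derivative $I_0'(0^+)=-\infty$, and use the non-extremality hypothesis $u \in \Omega(\mathcal{F})$ to produce a feasible curve entering the open cube along which $I$ drops at rate $t\lvert\log t\rvert$. Your second-paragraph Taylor estimate is the same quantitative fact the paper extracts by choosing a rectifiable path $\gamma$ to a point of $\randfeasibleregionml{m}$, writing $I((\sfmat,\pi))-I(\gamma(\epsilon))$ as a line integral of the G\^ateaux derivative, and invoking the Mean Value Theorem; the two computations are interchangeable.

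Where the two proofs genuinely diverge is the geometric step that furnishes the feasible descent direction. The paper first shows, using Lemma \ref{lem:openmarginal} (the marginal map carries nonempty open sets to sets with nonempty interior) together with a shrinking-neighborhood argument, that a small neighborhood $U$ of the boundary point in $\realfeasibleregionml{m}$ must meet $\randfeasibleregionml{m}$, deriving a contradiction with $u\notin\partial T^{(m)}(\mathcal{F})$ otherwise. You instead isolate the exact linear-algebraic obstruction, namely that $e_{(I_0,k_0)}$ lies in the row space of $J$, and pass it to the constant rank theorem applied to the augmented map $G=(t(\mathcal{F},\cdot),W_{I_0,k_0})$. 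This is an attractive reformulation, but your flag on paragraph three is well placed, and there are two concrete issues. First, the constant rank theorem needs $J_G$ to have \emph{locally} constant rank; your contradiction hypothesis pins the rank to $|\mathcal{F}|$ only at the single point $(\sfmat^*,\pi^*)$, and at nearby points of $\realstepfunctionspacel{m}$ the rank can jump to $|\mathcal{F}|+1$. Second, even granting constant rank, the conclusion that the image of $G$ is locally a graph $W_{I_0,k_0}=g(u')$ with $g\geq 0$ and $g(u)=0$ describes nothing worse than an interior minimum of a nonnegative function; by itself this does not put $u$ on $\partial T^{(m)}(\mathcal{F})$. To close the argument you would need to show that marginal values $u'$ with $g(u')<0$ are still \emph{attained} by some $[0,1]$-valued step function (perhaps through a different preimage), which is the step you did not supply and which is the crux of the lemma.

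On the other side of the ledger, you handle boundary cases the paper leaves implicit: the $W\leftrightarrow 1-W$ symmetry of $h_{1/2}$ to reduce unit entries to zero entries, and the degenerate case $\pi^*_i=0$. The paper's printed proof treats only a single vanishing $\sfmat$-entry (asserted without loss of generality, though this needs justification when several entries sit on the boundary simultaneously) and is silent about entries equal to $1$, so your coverage of these side cases is more careful even though the geometric core of your argument remains incomplete.
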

Finally, we prove the computability of $\optimalsolution$. 
\begin{restatable}{theorem}{thmradin}
\label{thm:radin}
Let $\mathcal{F}$ be a set of independent graphs, then every global minimum of $I$ on $\feasibleregion$ is a $m_0(\mathcal{F},u)$ step function for all almost cases when  $m \geq m_0(\mathcal{F},u)$ and  $u \in \Omega(\mathcal{F})$. 
\end{restatable}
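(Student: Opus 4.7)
The plan is to deduce Theorem \ref{thm:radin} directly from Theorem \ref{thm:POD} applied with $f_s = I$. The only obstruction is that $I_0(x) = x\log(2x) + (1-x)\log(2(1-x))$ has derivatives that diverge as $x \to 0^+$ or $x \to 1^-$, so $I$ does not literally satisfy the smoothness hypothesis of Definition \ref{def;densitiy}. This is precisely the situation anticipated in Section \ref{sec:caseI}: it suffices to show that $H_x I$ is continuous at every local minimum, which reduces the problem to verifying that all local minima of $I$ live in the interior region where $I$ is real-analytic.

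First, I would invoke Lemma \ref{lem:randomsolutions} to confirm that, for $u \in \Omega(\mathcal{F})$ and $m \geq m_0(\mathcal{F},u)$, every local minimum of $I$ on $\realfeasibleregionml{m}$ lies in the open set $\randfeasibleregionml{m}$, i.e.\ all step-function entries are strictly between $0$ and $1$. Consequently, on an open neighbourhood of any such minimum the function $I$ is analytic and all Hessians are well-defined and continuous. This lets the whole differential-geometric chain supporting Theorem \ref{thm:POD} — the almost-everywhere smoothness of feasible regions (Theorem \ref{thm:smoothnessalmosteverywhere}), the non-increase of connected components under embedding $\realstepfunctionspacel{m}\hookrightarrow\realstepfunctionspacel{m+1}$ (Theorem \ref{thm:numbercomponents}), the no-bifurcation of critical points (Theorem \ref{thm:almostsmoothnessCr}), and the preservation of local minima by the split map (Theorem \ref{thm:splitminimum}) — transfer without change to $I$, since each of these is a local assertion at interior critical points.

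Next, I would apply Theorem \ref{thm:localminimumsplit} inductively on $m$: for almost all $u \in \marginalpolytopem{m_0}\cap \Omega(\mathcal{F})$, every global minimum of $I$ on $\realfeasibleregionml{m+1}$ is the split of a global minimum on $\realfeasibleregionml{m}$. Descending from any $m$ down to $m_0(\mathcal{F},u)$ shows that every global minimum on $\realfeasibleregionml{m}$ represents the same function in $\realgraphonspacel$ as some $m_0$-step function. Running the proof of Theorem \ref{thm:POD} with $I$ in place of $f_s$ (legal in view of the preceding paragraph) then yields that the global minima of $I$ on $\realfeasibleregionl$ lie in $\realfeasibleregionml{m_0(\mathcal{F},u)}$. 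Passing to the unlabelled quotient via $\sim$, and using $L^1$-continuity of $I$ on the interior region (Lemma \ref{lem:continuity}) together with compactness of $(\graphonspaceu,\delta_\Box)$ (Theorem \ref{thm:compactness}) to identify the infinite-dimensional minimizers on $\feasibleregion$ with their finite-dimensional step-function representatives, the desired statement follows.

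The main obstacle I expect is the clean rigorous justification that the proof of Theorem \ref{thm:POD}, written for a globally smooth density function, carries over verbatim to the singular function $I$. Everything hinges on Lemma \ref{lem:randomsolutions}: once every critical point of interest is known to lie strictly inside the admissible cube, the Hessian, tangent-cone and Lagrange-multiplier computations in the underlying lemmas probe $I$ only at analytic points, so the no-bifurcation and minimum-preservation arguments proceed unchanged. A subsidiary care is the precise choice of $u \in \Omega(\mathcal{F})$: this condition simultaneously excludes extremal statistics (ensuring Lemma \ref{lem:randomsolutions} applies and so that $H_xI$ remains finite at minima) and critical values of the marginal map (ensuring smoothness of the feasible regions), so the "almost all cases" quantifier in the conclusion is exactly the intersection of the measure-zero exceptional sets inherited from Theorem \ref{thm:POD} and Lemma \ref{lem:randomsolutions}.
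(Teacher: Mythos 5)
Your proposal matches the paper's proof in both strategy and substance: the paper also reduces the singularity of $I$ at the boundary to Lemma \ref{lem:randomsolutions}, concluding that all local minima lie in $\randfeasibleregionml{m_0}$ where $I$ is smooth, and then invokes Theorem \ref{thm:POD}. Your more detailed justification of why the supporting lemmas transfer to $I$ — and the closing remarks about $L^1$-continuity and compactness — are elaborations of what the paper compresses into the phrase ``the conditions to apply Theorem \ref{thm:POD} hold,'' not a different argument.
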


\begin{proof}
From Lemma \ref{lem:randomsolutions}, every local minimum  $\optimalsolutionf{I}$ is in    $\randfeasibleregionml{m_0}$; hence $I$ is a smooth function at any local minimum. Locally, $I$ is a function defined on $\realstepfunctionspacel{m}$. Thus, the conditions to apply  Theorem \ref{thm:POD} hold. 

\end{proof}

\DeclareRobustCommand{\edgetriangle}
{%
W^*([\begin{tikzpicture}
   \draw (0,0) [fill=white] circle [radius=0.05] --(0,0.2) [fill=white] circle [radius=0.05]    ;  
   \end{tikzpicture},\begin{tikzpicture}
    \draw (0,0) [fill=white] circle [radius=0.05] --(0.1,0.2) [fill=white] circle [radius=0.05] --  (0.2,0) [fill=white] circle [radius=0.05] -- (0,0) ;      
    \end{tikzpicture}],[\rho, \tau])
}

\section{ Concluding Remarks and Open Problems }
\label{sec:conclusions}

That $\optimalsolution$ is a step function means that the most typical infinite and undirected networks have infinitely many redundancies. The existence of infinitely many redundancies is not new in complex networks. Bianconi et al.\ in  \cite{bianconi2001bose} have shown the existence of Bose-Condensate on infinite networks using other assumptions.  These redundancies can be seen as `` data patterns'' inside of these types of networks. 

These redundancies can also be seen as a partition of individuals (vertices) where each part represents a type of individual.  This conclusion could answer the question: Why, if we are free to make our own decisions, do we have a similar lifestyle to our neighbors and friends? On one side, the entropic forces can be seen as forces to increase our possibilities of expression. In other words, entropic forces are forces pursuing individual freedom. On the other hand, we have to follow or respect our socioeconomic constraints. Therefore, we have to be aligned to one type of lifestyle to maximize freedom and, at the same time, satisfy socioeconomic constraints.  

This work opens some problems.

 \subsection{Computational complexity to find a $\optimalsolution$}
 
 For Statistical Relational Learning $(SLR)$, this a natural problem because the complexity to compute $t(\mathcal{C},\optimalsolution)$ is $O(m_0(\mathcal{F},u)^{|E(\mathcal{C})|}))$ and $SLR$ is interested in having probabilistic inference methods with polynomial computational complexity. 

\DeclareRobustCommand{\Wedgetriangle}
{%
W^{*(1,2)}([\begin{tikzpicture}
   \draw (0,0) [fill=white] circle [radius=0.05] --(0,0.2) [fill=white] circle [radius=0.05]    ;  
   \end{tikzpicture},\begin{tikzpicture}
    \draw (0,0) [fill=white] circle [radius=0.05] --(0.1,0.2) [fill=white] circle [radius=0.05] --  (0.2,0) [fill=white] circle [radius=0.05] -- (0,0) ;      
    \end{tikzpicture}],[\rho, \tau])
}

 Radin et al.\ \cite{radin2013phase} found  the solution $\Wedgetriangle$  when $0 < \rho  < 0.5$ and $0 < \tau < \rho^3$ has a closed form given by 
 \begin{equation}
 \label{eq:trianglesolution}
\optimalsolutionmf{1,2}{I}(x,y) = \begin{cases}
    \rho - z & x,y < 1/2 \mbox{ or }  x,y > 1/2 \\
    \rho + z &  x < \frac{1}{2} < y \mbox{ or } y < \frac{1}{2} < x,
\end{cases}
\end{equation}
and there is only one global optimum. 
Hence, the computational complexity to obtain $\optimalsolution$ is polynomial. Still, there can be other combinations of $(\mathcal{F},u)$, which might be several global/local minima. Hence, the computation might be an NP-complete problem. Thus it is an open problem  to understand when $(\mathcal{F},u)$ yields 
$\optimalsolution$ obtained with polynomial complexity.  
 
 We conjecture that a sufficient condition to compute $\optimalsolution$ in a polynomial time is $\optimalsolution$  is only one global/local minimum
since it is possible to devise a gradient descent algorithm that improves every step of an initial solution until the end of $\optimalsolution$.

\subsection{Uniqueness of $\optimalsolution$}

The uniqueness problem of  $\optimalsolution$ is not easy and is directly related to finite forcible graphons. A graphon $W$ is finite forcible \cite{lovasz2011finitely} if there is a finite number of subgraph densities constraints of simple graphs $(\mathcal{F},u)$ such that $W$ is uniquely determined by $(\mathcal{F},u)$. 
For our problem, $(\mathcal{F},u)$ is finite forcible if $\optimalsolution$ is unique. Lov\'asz conjectured that Finite Forcibility is true, but recently, Grzesik et al.\ in \cite{grzesik2018elusive} have shown that the extremal solutions of $t(F, \cdot)$ or $I(\cdot)$ on $\feasibleregion$ are not necessarily unique thus Finite Forcibility is false. Hence, it is an open problem to show for which combination of $(\mathcal{F},u)$, $\optimalsolution$ is unique.

\subsection{Learning of implicit constraints of big networks}

In the current literature, there are methods to estimate a Stochastic Block Model $W$ from a graph $G$ such that $\mathbb{G}(n,W)$ is sufficiently closed to $G$, and $n$ is the number of nodes of $G$. Examples of these methods are given in \cite{wolfe2013nonparametric}, \cite{gao2015rate} and \cite{klopp2017optimal}.

These methods allow learning implicit constraints $(\mathcal{F},u)$ of an observed network $G$. The idea is to assume that $G$ is the most typical random graph from $\mathbb{G}(n, \optimalsolution)$ where $(\mathcal{F},u)$ is unknown. 

Let $W(G)$ be a Stochastic Block Model (step function) estimated from $G$. Hence, we assume $W(G) = \optimalsolution$; hence, $W(G)$ is a critical point.  Thus there is $\beta \in \mathbb{R}^{|\mathcal{F}|}$ such that 
\begin{equation*}
    \nabla I(W(G)) = \sum_{i=1}^{|\mathcal{F}|} \beta_i \nabla t(\mathcal{F}_i,W(G) )
\end{equation*}
for some unknown $\mathcal{F}$. Hence given  $\mathcal{F}$, the problem to find $\beta$ becomes a linear regression problem, i.e.

\begin{equation}
\label{eq:inverseproblem}
    \min_\beta \| \nabla I(W(G)) - \sum_{i=1}^{|\mathcal{F}|} \beta_i \nabla t(\mathcal{F}_i,W(G) ) \|_2. 
\end{equation}
Thus to find the constraints $(\mathcal{F},u)$ is equivalent to find the best set of variables $\{\nabla t(\mathcal{F}_i,.)\}$ from the infinite set of simple graphs $\mathcal{G}$ such that  
\begin{equation*}
\| \nabla I(W(G)) - \sum_{i=1}^{|\mathcal{F}|} \beta_i \nabla t(\mathcal{F}_i,W(G) ) \|_2 < \epsilon \quad  \mbox{ and } \quad \| W(G) - W^*(\mathcal{F}, u)\|_1 < \epsilon   
\end{equation*}
 where  $\epsilon$ is a given error and $t(\mathcal{F}, W(G))=u$. Thus, the problem is given an $\epsilon > 0$ combinatorial search of a finite subset $\mathcal{F}$ of $\mathcal{G}$, the set of all simple and finite graphs.  This problem can seen as an unbounded version of subset selection in logistic regression \cite{miller2002subset}.

\section*{Acknowledgments}
The authors would like to thank Marco Zambon, who has had the patience and kindness to review this paper countless times. This work is partially supported by  ERC-StG 240186 MiGraNT: Mining Graphs and Networks, a theory-based approach, and SENESCYT of Ecuador.

\bibliographystyle{plain}
\bibliography{references}

\section*{Proofs}

\thmpartialderivatives*

\begin{proof}
In the next lines, we abbreviate $t(F,(\sfmat, \pi))$ as $t(F)$. For the first partial derivative, we compute
\begin{eqnarray*}
\frac{\partial t(F)}{\partial \sfmat_{i_1, \cdots, i_{d_k},k}} &=& \sum_{x_1, \cdots,x_{|F|}=1}^m   \frac{\partial }{\partial \sfmat_{i_1 \cdots i_{d_k},k}} \left\lbrace  \prod_{(s_1, \cdots,s_{d_j}) \in E(F)} \sfmat_{x_{s_1} \cdots x_{s_{d_j}},j} \right\rbrace \prod_{s \in V(F)} \pi_{x_s} \\
&=& \sum_{x_1, \cdots,x_{|F|}=1}^m \left\lbrace  \sum_{(a_1 \cdots a_{d_u}) \in E(F)} \frac{\partial \sfmat_{x_{a_1} \cdots x_{a_{d_u},u}} }{\partial \sfmat_{i_1 \cdots i_{d_k},k}}  \prod_{(s_1 \cdots s_{d_j}) \in E(F^{(a_1 \cdots a_{d_u} )})} \sfmat_{x_{s_1} \cdots x_{s_{d_j}},j} \right\rbrace \\ && \prod_{s \in V(F)} \pi_{x_s} \\
\end{eqnarray*}
By plugging (\ref{eq:simplepartialderivative}) into the above formula, we have 
\begin{eqnarray*}
\frac{\partial t(F)}{\partial \sfmat_{i_1, \cdots, i_{d_k},k}} &=&  \sum_{x_1, \cdots,x_{|F|}=1}^m \left\lbrace  \sum_{(a_1 \cdots a_{d_u}) \in E(F)} \delta_{u,k} \delta_{ \{x_{a_1} \cdots x_{a_{d_u}}\} , \{ i_1 \cdots i_{d_k} \} }   \right. \\&& \left. \prod_{(s_1 \cdots s_{d_j}) \in E(F^{(a_1 \cdots a_{d_u} )})} \sfmat_{x_{s_1} \cdots x_{s_{d_j}},j} \right\rbrace  \prod_{s \in V(F)} \pi_{x_s}   
\end{eqnarray*}
Then 
\begin{eqnarray*}
\frac{\partial t(F)}{\partial \sfmat_{i_1, \cdots, i_{d_k},k}} &=&  \sum_{(a_1 \cdots a_{d_k}) \in E_k(F)}  \sum_{x_1, \cdots,x_{|F|}=1}^m   \delta_{ \{x_{a_1} \cdots x_{a_{d_k}}\} , \{ i_1 \cdots i_{d_k} \} }    \\&&  \prod_{(s_1 \cdots s_{d_j}) \in E(F^{(a_1 \cdots a_{d_k} )})} \sfmat_{x_{s_1} \cdots x_{s_{d_j}},j}   \prod_{s \in V(F)} \pi_{x_s} \\    
\end{eqnarray*}
Note that $\delta_{ \{x_{a_1} \cdots x_{a_{d_k}}\} , \{ i_1 \cdots i_{d_k} \} } $ is $1$ only in orbit of $\sfmat_{{i_1} \cdots i_{d_k}}$ by the actions of $\Sigma_{d_k}$. Let $orb((i_1, i_2, \cdots i_{d_k}))$ be orbits of $\sfmat_{i_1 \cdots i_{d_k}}$ by the actions of $\Sigma_{d_k}$ then
\begin{eqnarray*}
\frac{\partial t(F)}{\partial \sfmat_{i_1, \cdots, i_{d_k},k} } &=& \sum_{(a_1 \cdots a_{d_k}) \in E_k(F)} \sum_{x_1, \cdots,x_{|F|-d_k}=1}^m  \sum_{(j_1, \cdots,j_{d_k}) \in orb((i_1 \cdots i_{d_k})) }  \delta_{ \{x_{a_1} \cdots x_{a_{d_k}}\} , \{ i_1 \cdots i_{d_k} \} }    \\&&  \prod_{(s_1 \cdots s_{d_j}) \in E(F^{(a_1 \cdots a_{d_k} )})} \sfmat_{x_{s_1} \cdots x_{s_{d_j}},j}   \prod_{s \in V(F)} \pi_{x_s}     
\end{eqnarray*}
Hence 
\begin{eqnarray*}
\frac{\partial t(F)}{\partial \sfmat_{i_1, \cdots, i_{d_k},k} } &=& \pi_{i_1} \cdots \pi_{i_{d_k}} \sum_{(j_1, \cdots,j_{d_k}) \in orb((i_1 \cdots i_{d_k})) }  \sum_{(a_1 \cdots a_{d_k}) \in E_k(F)}  \\&&  \sum_{x_1, \cdots,x_{|F|-d_k}=1}^m   \prod_{(s_1 \cdots s_{d_j}) \in E(F^{(a_1 \cdots a_{d_k} )})} \sfmat_{x_{s_1} \cdots x_{s_{d_j}},j}   \prod_{s \in V(F)} \pi_{x_s}     
\end{eqnarray*}
Thus
\begin{eqnarray*}
\frac{\partial t(F)}{\partial \sfmat_{i_1, \cdots, i_{d_k},k} } &=& \pi_{i_1} \cdots \pi_{i_{d_k}} \sum_{(j_1, \cdots,j_{d_k}) \in orb((i_1 \cdots i_{d_k})) } t_{j_1 \cdots j_{d_k}} (\partial^{ \bullet (a_1 \cdots a_{d_k})}, (\sfmat,\pi) )
\end{eqnarray*}
and 
\begin{eqnarray*}
\frac{\partial t(F)}{\partial \pi_i} &=& \sum_{x_1, \cdots, x_{|F|=1}}^m \prod_{(st) \in E(F)} \sfmat_{x_s x_t}  \frac{\partial } {\partial \pi_i} \left(\prod_{s \in V(F) } \pi_{x_s} \right) \\
&=& \sum_{x_1, \cdots, x_{|F|=1}}^m  \prod_{(st) \in E(F)} \sfmat_{x_s x_t}   \sum_{a \in V(F)} \frac{\partial \pi_{x_a} } {\partial \pi_i} \prod_{s \in V(F) \setminus \{a\} } \pi_{x_s}  \\
&=& \sum_{a \in V(F)} t_i (F^{\bullet a},(A,\pi)) \\
&=& t_{i}(\partial^{\bullet} F,(A,\pi)) .
\end{eqnarray*}

\end{proof}

\lemopenmarginal*
\begin{proof}
$(1) \implies (2)$. Let $\pi' \in (\convexcombm{m})^\circ$ be the partition vector such that $J_{(\sfmat',\pi')}(\mathcal{F})$ for some $\sfmat'$.

It is sufficient to prove that for any in  $\pi \in (\convexcombm{m})^\circ$, there is $\sfmat \in \mathbb{R}^{(m,r,d)}_S$ such that $(\sfmat,\pi)$ is a regular point of the marginal map $t(\mathcal{F},\cdot)$ restricted to $\stepfunctionspacelpi{m}{\pi}$.

Let $\left[  [z_{i_1 \cdots i_{d_1}} ]_{i_1 \cdots i_{d_1} =1}^m \cdots [z_{i_1 \cdots i_{d_r}} ]_{i_1 \cdots i_{d_r} =1}^m \right]$ and $[y_i]_{i=1}^{m}$ be symmetric arrays of real variables whose domain ranges $\realstepfunctionspacel{m}$.  Recall formula (\ref{eq:1partialderstep})

\begin{eqnarray*}
\frac{\partial t(F)}{\partial \sfmat_{i_1, \cdots, i_{d_k},k} } &=& \pi_{i_1} \cdots \pi_{i_{d_k}} \sum_{(j_1, \cdots,j_{d_k}) \in orb((i_1 \cdots i_{d_k})) } t_{j_1 \cdots j_{d_k}} (\partial^{ \bullet (a_1 \cdots a_{d_k})}, (\sfmat,\pi) ),
\end{eqnarray*}
Let $r(z,y)= \frac{\partial t(F, (\sfmat,\pi) )}{\partial {\sfmat}_{i,j}}|_{A=z, \pi = y}$. We observe that $r(z,y)$ is a homogeneous polynomial of grade $|V(F)|+|E(F)|-1$ on variables $z$ and $y$. 

Let $G(\sfmat,\pi) = J_{(\sfmat,\pi)}(\mathcal{F})^\top J_{(\sfmat,\pi)}(\mathcal{F})$ be the gramian matrix of the Jacobian matrix of marginal map $t(\mathcal{F},\cdot ):\stepfunctionspacelpi{m}{\pi^{(m)}} \to \mathbb{R}^{|\mathcal{F}|}$. Recall also that $G(\sfmat,\pi)$ is square and $G(\sfmat,\pi)$ is full rank iff $J_{(\sfmat,\pi)}(\mathcal{F})$ is full rank. 

Let $Q(z,y)=det(G(\sfmat,\pi)|_{A=z, \pi=y})$. Hence $Q(z,y)$ is a homogeneous polynomial of grade $(|V(F)|+|E(F)|-1)^{2|\mathcal{F}|}$ on variables $z$ and $y$. Note that every monomial $q(z,y)$ of $Q(z,y)$ is equal to $r(y)s(z)$ where $r$  and $s$ are homogeneous polynomials as well. 

Since $\mathcal{F}$ is a set of independent graphs, $Q(z,\pi')$ is a non-zero polynomial.
Hence we can obtain another polynomial $Q(z,\pi)$ from $Q(z,\pi')$ by substituting  each monomial $q(z,\pi')=r(\pi')s(z)$  of $Q(z,\pi')$ by $q(z,\pi)=r(\pi)s(z)$. Since $\pi \in (\convexcombm{m})^\circ$, $r(\pi)$ of each monomial $q(z,\pi)$ is non zero. Hence, $Q(z,\pi)$ is a non-zero polynomial.  
Therefore there is $\sfmat \in \mathbb{R}_S^{(m,r,d)}$ such that $Q(\sfmat,\pi) \neq 0$. It follows that $(\sfmat,\pi)$ is regular point of  the marginal map $t(\mathcal{F},\cdot)$ restricted to $ \stepfunctionspacelpi{m}{\pi}$.

$(2) \implies (3)$. Since $T^{(m)}(\mathcal{F},\pi)$ has a non empty interior in $\mathbb{R}^{|\mathcal{F}|}$, $J_x(\mathcal{F})$ is full rank almost everywhere in $ \stepfunctionspacelpi{m}{\pi}$ due to Sard's theorem and the analyticity of $t(\mathcal{F},\cdot)$. Then $J_x(\mathcal{F})$ is full rank almost everywhere in $ \stepfunctionspacel{m}$. Hence, for any non-empty open set $U \subset \stepfunctionspacel{m}$, $t(\mathcal{F},U)$ has a non-empty interior.   

$(3) \implies (1)$. Now we assume that $T^{(m)}(\mathcal{F},\pi)$ has a non-empty interior for some $\pi \in \convexcombm{m}$. Hence, from Sard's theorem, the regular values of the marginal map $t(\mathcal{F},\cdot)$ are dense in $T^{(m)}(\mathcal{F},\pi)$. Hence there is a regular point $(\sfmat,\pi) \in \stepfunctionspacelpi{m}{\pi}$ of  $t(\mathcal{F},\cdot)$. This implies that $\mathcal{F}$ has independent graphs. 

\end{proof}

\lemnonzerogradient*
\begin{proof}
From the partial derivative formula given on (\ref{eq:1partialderstep}), we have
\begin{eqnarray*}
&& \langle (\sfmat,\pi), \nabla t(F, (\sfmat,\pi) \rangle  \\
&& = \sum_k \sum_{i_1 \leq \cdots i_{k} } \pi_{i_1} \cdots \pi_{i_{d_k}} \sfmat_{i_1 \cdots i_{d_k}, k} \sum_{(j_1, \cdots,j_{d_k}) \in orb((i_1 \cdots i_{d_k})) }  \\ && t_{j_1 \cdots j_{d_k}} (\partial_k^{  (\bullet \cdots \bullet)} F, (\sfmat,\pi) ) \\ && = 
\sum_k   \sum_{(a_1 \cdots a_{d_k}) \in E_k(F)}  \sum_{i_1 \leq \cdots \leq i_{k} } \pi_{i_1} \cdots \pi_{i_{d_k}} \sfmat_{i_1 \cdots i_{d_k}, k} \\ && 
t_{j_1 \cdots j_{d_k}} (F^{\bullet(a_1 \cdots a_{d_k})} , (\sfmat,\pi) ) \\ && = 
\sum_k   \sum_{(a_1 \cdots a_{d_k}) \in E_k(F)}  \sum_{i_1  \cdots i_{k}=1 }^m 
t_{j_1 \cdots j_{d_k}} (F^{\bullet a_1, \cdots, a_{d_k} } , (\sfmat,\pi) ) \pi_{i_1} \cdots \pi_{i_{d_k}} \\ && = \sum_k   \sum_{(a_1 \cdots a_{d_k}) \in E_k(F)} t(F^, (\sfmat,\pi) ) = |E(F)| t(\sfmat,\pi))
\end{eqnarray*}

\end{proof}

\lemsmoothnessalmosteverywhere*
\begin{proof}
From Lemma \ref{lem:nonzerogradient}, we have 
\begin{equation*}
\langle (\sfmat,\pi), \nabla t( \sum_k a_{k} \mathcal{F}_{k}, (\sfmat,\pi) \rangle = \sum_k e_k a_{1k} t(\mathcal{F}_{k},(\sfmat,\pi))    
\end{equation*}
where $e_k = |E(F_k)|$.  To prove $\realfeasibleregionmllpi{m}{ [\mathcal{F} }{ [u]  }{  \pi  }$ is smooth almost everywhere, it is sufficient to prove that there is a  $(\sfmat, \pi) \in \stepfunctionspacel{m}  $ such that they satisfy the following conditions 
\begin{equation}
\label{eq:var002}
    \sum_k a_k t(\mathcal{F}_k,  (\sfmat,\pi )) = \sum_k  a_k t(\mathcal{F}_k,  (\sfmat,\pi )) = u.
\end{equation}
and
\begin{equation}
\label{eq:var001}
\langle (\sfmat,\pi), \nabla t( \sum_k a_{k} \mathcal{F}_{k}, (\sfmat,\pi) \rangle = \sum_k  e_k a_{k}  t(\mathcal{F}_{k},(\sfmat,\pi)) \neq 0
\end{equation}
Let $c(F)=\{F_1, \cdots, F_n\}$ be the constituents of the quantum graph $F$.  Note that if we consider each $ (\mathcal{F}_k,  (\sfmat,\pi))$ as a variable $z_k$ then the constraints $(\ref{eq:var001})$ and $(\ref{eq:var002})$ become a hyperplane  and hyperplane complement in $\mathbb{R}^{|c(F)|}$ i.e. $\sum_k a_k z_k =u$  and $\sum_k a_ke_k z_k \neq 0$. Since $u \neq 0$, the hyperplane is not contained in the hyperplane complement. Hence we can find step functions $(\sfmat,\pi) \in \realstepfunctionspacel{m}$ satisfying $(\ref{eq:var001})$ and $(\ref{eq:var002})$.
    
\end{proof}

\thmnumbercomponents*
\begin{proof}

Note that
\begin{equation*}
\realfeasibleregionml{m+1} = \bigcup_{\tiny \begin{array}{c}
    \pi \in \convexcombm{m} \\ \lambda \in [0,1]   
\end{array} } \realfeasibleregionmlpi{m+1}{\theta(\pi,\lambda,k)}.    
\end{equation*}
Hence it is sufficient to prove that if $\lambda \in (0,1)$, then the number of connected components of 
 $ \realfeasibleregionmlpi{m+1}{\theta(\pi,\lambda,k)}$  is not higher than the number of connected components of 
 $ \realfeasibleregionmlpi{m}{\theta(\pi,0,m)}$. By contradiction. There is  a new connected component, say 
\begin{equation*}
N_1 \subset \realfeasibleregionmlpi{m+1}{\theta(\pi,\lambda_1,m)}.    
\end{equation*}
The existence of $N_1$ is due to the new connected component $N \subset \realfeasibleregionml{m+1} \setminus \realfeasibleregionml{m}$ containing $N_1$, otherwise $N_1$ is not a new connected component. 
Let 
\begin{equation*}
    \lambda^* = \inf \{ \lambda  : N \cap  
    \realfeasibleregionmlpi{m+1}{ \theta(\pi,\lambda,m)) }
 \neq \emptyset\}.
\end{equation*}

If $\lambda^* = 0$ then there is $x \in N$ arbitrary close to $\realfeasibleregionml{m} \subset T^{-(m_0, m)}$. Hence there is a sequence $\{x_n \}_{n=1}^\infty \subset N \subset T^{-(m_0,m+1)}(\mathcal{F}) \setminus T^{-(m_0,m)}(\mathcal{F})$  such that $t(\mathcal{F},x_n)=u$ for all $n >0$ and  the distance to $T^{-(m_0,m)}(\mathcal{F})$ converges to zero i.e. $d(x_n, T^{-(m_0,m)}(\mathcal{F})) \to 0$  as $n \to \infty$. Hence we conclude that 
$u \in \partial T^{(m)}(\mathcal{F})$ which is a contradiction since $u \in T^{(m_0)}(\mathcal{F})^\circ \subset T^{(m)}(\mathcal{F})^\circ$. Thus $\lambda^* >0$.

First we assume $u \in \regstatsm{m}{\mathcal{F}}$. 
Let $(\sfmat, \theta(\pi,\lambda^*,m) ) \in N \cap \realfeasibleregionmlpi{m+1}{\theta(\pi,\lambda^*,m)}$. Since $u \in Reg(\mathcal{F})$,  $ \realfeasibleregionml{m}$ is a smooth manifold of $n(m,r,d)+m-1-|\mathcal{F}|$  dimensions. 

Let $U \subset N$ be an open neighborhood  of $(\sfmat, \theta(\pi,\lambda^*,m) ) $ in $\realfeasibleregionml{m+1}$. Moreover note that $\realfeasibleregionmlpi{m}{\pi} \cap \realfeasibleregionmlpi{m}{\pi'} = \emptyset$ if $\pi \neq \pi'$ and 
\begin{equation*}
\realfeasibleregionml{m}=\bigsqcup_{\pi \in \convexcombm{m}} \realfeasibleregionmlpi{m}{\pi}.    
\end{equation*}
Let $Exp^{-1}_x:U \to T_x \realfeasibleregionml{m+1}$ be the inverse of exponential map (see Eq. (  \ref{eq:exponentialmap})) defined on a neighborhood of $U$ of $x$.
From Theorem  \ref{thm:smoothnessalmosteverywhere}, $\realfeasibleregionmlpi{m+1}{\theta(\pi,\lambda^*,m)}$  is smooth almost everywhere, but with non-uniform dimension, each connected component could have a different dimension. 

If $x=(\sfmat, \theta(\pi,\lambda^*,m) ) $ is a regular point of  $\realfeasibleregionmlpi{m+1}{\theta(\pi,\lambda^*,m)}$ at the connected component $C \subset \realfeasibleregionmlpi{m+1}{\theta(\pi,\lambda^*,m)}$ which has $n_1$ dimensions  and $n_1 \geq n(m+1, r,d) -|\mathcal{F}|$ then 
\begin{equation*}
T_{x} C \subseteq T_{x}\realfeasibleregionml{m+1}.    
\end{equation*}
Recall that $\realfeasibleregionml{m+1} $ has  $n(m+1,r,d) +m-|\mathcal{F}|$ dimensions.  Let $O_x$ be the orthogonal complement  of $T_{x} C$ in $ T_{x}\realfeasibleregionml{m+1}$. 
Let $U_1 \subset O_x$ be an open neighborhood of $0$.  Note that $O_x$ has $n_2 = n(m+1,r,d) -|\mathcal{F}| -n_1$ dimensions and $n_2 \in [m]$. It follows that only $n_2$ vectors in $T_{x}\realfeasibleregionml{m+1}$ can vary the parameters of the partition vectors\footnote{The number of parameters of partition vectors in $\convexcombm{m}$ is $m-1$ since $\sum_i \pi_i =1 $.} $\pi' \in \convexcombm{m+1}$ of the step functions in $\realfeasibleregionml{m+1}$ through the exponential map $Exp_x:V  \to U $ where $V \subset T_x \realfeasibleregionml{m+1}$ is a neighborhood of $0$.  

If $n_2=0$ then $T_{x} C = T_{x}\realfeasibleregionml{m+1}$. It follows that $U = U \cap C $ and since $\realfeasibleregionml{m+1}$ and $C$ are analytical manifolds then $N=C$. If dimension of $C$ is higher than $n(m+1,r,d) -|\mathcal{F}|$ then there are at most $|\mathcal{F}|-1$ active constraints but on  $\realfeasibleregionml{m+1}$ there are $|\mathcal{F}|$ active constraints. This a contradiction.  Thus, $n_2$ is at least equal to $1$.

In the worst case when $n_2=1$, only one parameter in $\convexcombm{m+1}$ can be varied by the vectors in $O_x \subset T_{x}\realfeasibleregionml{m+1}$.

Recall that every $(\sfmat',\pi') \in \realfeasibleregionml{m+1} $ is invariant  under permutations of $m+1$ row/columns. Hence, we have the freedom to choose which coordinates of the partition vectors are varied by vectors in $O_x$. Let's assume the vectors in $O_x$  vary the $m$-coordinate of $\convexcombm{m+1}$. Hence these vectors in $O_x$  correspond to the variation of parameter $\lambda$ in $(\sfmat, \theta(\pi,\lambda,m) )$. Hence we conclude that $U$ can contain step functions with partition vectors of the form $(\sfmat',\theta(\pi,\lambda_3,m) ) \in U$ where  $\lambda_3 < \lambda^*$. 

If $x=(\sfmat, \theta(\pi,\lambda^*,m) )$ is a singular point of $\realfeasibleregionmlpi{m+1}{\theta(\pi,\lambda^*,m)}$ and since $\realfeasibleregionmlpi{m+1}{\theta(\pi,\lambda^*,m)}$ is smooth  almost everywhere then there is a regular point  $z \in \realfeasibleregionmlpi{m+1}{\theta(\pi,\lambda^*,m)}$ arbitrary close to $x$. Hence we apply the arguments for the case $x=(\sfmat, \theta(\pi,\lambda^*,m) )$ and conclude $Exp_x(U)$  contains step functions with partition vectors of the form $(\sfmat',\theta(\pi,\lambda_3,m) ) \in U$ where  $\lambda_3 < \lambda^*$. 

Both cases contradict that $\lambda^*$ is the infimum of  $\lambda$ such that
\begin{equation*}
    N \cap \realfeasibleregionmlpi{m+1}{\theta(\pi,\lambda,m)} \neq \emptyset.
\end{equation*}

Now we assume $u \in \cristats{m}{\mathcal{F}}$. 

From Remark \ref{rem:aggregatedfeasibleregion}, there are arbitrary close regular points $u'$ to $u$ such that the points of $\realfeasibleregionmq{m+1}{\mathcal{F}}{u'}$
 are arbitrary close to points of  $\realfeasibleregionml{m+1}$. For the feasible region $\realfeasibleregionmq{m+1}{\mathcal{F}}{u'}$
  there is no new connected component in $\realfeasibleregionmq{m+1}{\mathcal{F}}{u'} \setminus \realfeasibleregionmq{m}{\mathcal{F}}{u'}$ . Hence, we conclude that $\realfeasibleregionml{m+1}$ has no new connected component.

\end{proof}

\lemcriticalsplit*

\begin{proof}

In the first case. Without loss generalization, we assume that $k=m$. It is sufficient to prove that $\theta((\sfmat^*,\pi), \lambda,m)$ is a critical point when if $(\sfmat^*,\pi^*)$ is critical. 

Recall the formulas to compute the first partial derivative from Theorem \ref{eq:1partialderstep} we have,
\begin{eqnarray*}
\frac{\partial t(F)}{\partial \sfmat_{i_1, \cdots, i_{d_k},k} } &=& \pi_{i_1} \cdots \pi_{i_{d_k}} \sum_{(j_1, \cdots,j_{d_k}) \in orb((i_1 \cdots i_{d_k})) } t_{j_1 \cdots j_{d_k}} (\partial_k^{  (\bullet \cdots \bullet)} F, (\sfmat,\pi) )
\end{eqnarray*}
and by taking the partial derivatives for  (\ref{eq:generalratefunctionstep}), we have
\begin{equation*}
\frac{\partial f_s((\sfmat,\pi))}{\partial \sfmat_{i_1, \cdots i_{d_k},k}} = \pi_1 \cdots \pi_{d_k}  f'_0(\sfmat_{i_1 \cdots i_{d_k} }) 
\end{equation*}
Hence the constraint 
\begin{equation*}
  \frac{\partial f_s((\sfmat,\pi))}{\partial \sfmat_{i_1 \cdots i_{d_k},k}}= \sum_s \beta_s  \frac{\partial t(\mathcal{F}_s,(\sfmat, \pi))}{\partial \sfmat_{i_1 \cdots i_{d_k}},k}  
\end{equation*}
is reduced to 
\begin{equation*}
   f'_0(\sfmat_{i_1 \cdots i_{d_k},k })  = \sum_s \beta_s    \sum_{(j_1, \cdots,j_{d_k}) \in orb((i_1 \cdots i_{d_k})) } t_{j_1 \cdots j_{d_k}} (\partial_k^{ (\bullet \cdots \bullet)} \mathcal{F}_s, (\sfmat,\pi) )  
\end{equation*}
Note that the above constraint depends only on the step function $(\sfmat,\pi)$ and not on their parameterization, thus   
\begin{equation}
   \label{eq:constraintAij}
    \frac{\partial f_s((\sfmat,\pi))}{\partial \sfmat_{i_1 \cdots i_{d_k},k}}\big{|}_{(\sfmat,\pi)=\theta((\sfmat^*,\pi^*), \lambda,m)}  = \sum_s \beta_s  \frac{\partial t(\mathcal{F}_s,(\sfmat, \pi))}{\partial \sfmat_{i_1 \cdots i_{d_k},k}} \big{|}_{(\sfmat,\pi)=\theta((\sfmat^*,\pi^*), \lambda,m)}    
\end{equation}
for all $i_1, \cdots, i_{d_k} \in [m]$.
Now to compute the Lagrange multiplier condition for $\frac{\partial }{\partial \pi_i}$, we need to consider the constraint $\sum_k \pi_k=1$, thus
\begin{equation*}
    \frac{\partial }{\partial \pi_i}\big{|}_{\sum_k \pi_k=1} = \frac{\partial }{\partial \pi_i} - \frac{\partial }{\partial \pi_m}
\end{equation*}
Hence
\begin{equation*}
    \frac{\partial f_s}{\partial \pi_i} - \frac{\partial f_s}{\partial \pi_m} = \sum_k \beta_k ( \frac{\partial t(\mathcal{F}_k)}{\partial \pi_i} - \frac{\partial t(\mathcal{F}_k)}{\partial \pi_m} )
\end{equation*}
The partial derivatives $\frac{\partial}{\partial \pi_i}$ of $t(F,\cdot)$ and $f_s$ are
\begin{equation*}
\frac{\partial t(F,(\sfmat, \pi))}{\partial \pi_i} = t_i(\partial^\bullet F,(\sfmat, \pi))
\end{equation*}
and 
\begin{equation*}
\frac{\partial f_s((\sfmat,\pi))}{\partial \pi_i} = \sum_k  d_k \sum_{i_2 \cdots i_{d_k}=1}^m f_0(A_{i, i_2,  \cdots i_{d_k} } ) \pi_{i_2} \cdots \pi_{i_{d_k}}
\end{equation*}
Note that the partial derivatives $\frac{\partial}{\partial \pi_i}$ depend only on the step function $(\sfmat,\pi)$ and not on their parameterization, thus   
\begin{equation}
\label{eq:constraintpi}
    \frac{\partial f_s}{\partial \pi_i} - \frac{\partial f_s}{\partial \pi_m} \big{|}_{(\sfmat,\pi)=\theta((\sfmat^*,\pi^*), \lambda,m)}     = \sum_k \beta_k ( \frac{\partial t(\mathcal{F}_k)}{\partial \pi_i} - \frac{\partial t(\mathcal{F}_k)}{\partial \pi_m} ) \big{|}_{(\sfmat,\pi)=\theta((\sfmat^*,\pi^*), \lambda,m)}    
\end{equation}
From (\ref{eq:constraintAij}) and (\ref{eq:constraintpi}), we conclude that 
\begin{equation}
\label{eq:lagrangemultipliersplit}
    \nabla f_s((\sfmat,\pi)) \big{|}_{(\sfmat,\pi)=\theta((\sfmat^*,\pi^*), \lambda,m)}      = \sum_k \beta_k \nabla t(\mathcal{F}_k,(\sfmat,\pi)) \big{|}_{(\sfmat,\pi)=\theta((\sfmat^*,\pi^*), \lambda,m)} 
\end{equation}
The above constraint proves the first case. For the second case, the proof is straightforward, which is omitted. 
\end{proof}

\thmalmostsmoothnessCr*
\begin{proof}
First, we give sufficient conditions for the smoothness of $\criticalpoints{m}$.  Note that   $\criticalpoints{m}$ is defined by the constraints 
\begin{equation}
\label{eq:varA}
\frac{\partial f_s((\sfmat, \pi))}{\partial \sfmat_{i_1  \cdots i_{d_k},k}} - \sum_{k=1}^{|\mathcal{F}|} \beta_j \frac{\partial t(\mathcal{F}_j, (\sfmat,\pi))}{\partial \sfmat_{i_1 \cdots i_{d_k},k}}=0 \quad \mbox{ for } 1 \leq  i_1  \cdots \leq i_{d_k} \leq m
\end{equation}
and
\begin{equation}
\label{eq:varpi}
\frac{\partial f_s((\sfmat, \pi))}{\partial \pi_i} - \sum_{k=1}^{|\mathcal{F}|} \beta_k \frac{\partial t(\mathcal{F}_k, (\sfmat,\pi))}{\partial \pi_i} =0 \quad \mbox{ for } 1 \leq  i  \leq m-1
\end{equation}
in the ambient space $\mathbb{R}^{n(m,r,d)+m-1}+|\mathcal{F}|$.  Thus, if $\criticalpoints{m}$ is a manifold, it must have  $|\mathcal{F}|$ dimensions independent of  $m$. Hence, we conclude that the split of critical points does not bifurcate other critical points.  

A sufficient condition for the smoothness $\criticalpoints{m}$  comes from the Constant Rank Theorem. Hence we compute the  partial derivatives $\frac{\partial}{\partial \sfmat_{i_1 \cdots i_{d_k}}}$,  $\frac{\partial}{\partial \pi_i}$ and  $\frac{\partial}{\partial \beta_i}$to the constraints $(\ref{eq:varA})$  and $(\ref{eq:varpi})$ to obtain the Jacobian $J =J_{((\sfmat,\pi),\beta)}(\mathcal{F},f_s) $ of the constraints
\begin{equation*}
J  = \left[ \begin{array}{c|c}
     H_{((\sfmat,\pi),\beta)}L(f_s,\beta)  &  -J_{((\sfmat,\pi))}(\mathcal{F})^\top
\end{array} \right]
\end{equation*}
Note that $H_{((\sfmat,\pi),\beta)}L(f_s,\beta)=H^g_{(\sfmat,\pi)} f_s$ at $(\sfmat,\pi)$ and  $J_{(\sfmat,\pi)}(\mathcal{F},\pi)$ is  full-rank on $\realfeasibleregionml{m}$  since  $u \in Reg(\mathcal{F})$. Thus, we must prove that $J $ has a constant rank on $\criticalpoints{m}$. 

Since the constraints (\ref{eq:constraintAij}) and (\ref{eq:constraintpi}) are defined by analytical functions,  the rank of $J$ might be different in each connected component $C_k(m)$ of $\criticalpoints{m}$. In this case, in each $C_k(m)$, the matrix  $J$ might have redundant columns. These redundant columns must be columns from $H_{((\sfmat,\pi),\beta)}L(f_s,\beta) $ since $J$ is full rank. Hence the corresponding constraints from $(\ref{eq:varA})$  and $(\ref{eq:varpi})$ are redundant. Let $V_k(m)$ be the set of coordinates $\sfmat_{ij}$ and $\pi_i$ of $C_k(m) $ that correspond to redundant constraints. 

The redundant constraints appear when there are non-isolated critical points since the coordinates from $V_k(m)$ can have any value and do not change the condition that $x \in \criticalpoints{m}$ is a critical point. By dropping the redundant coordinates, we conclude the variation of these coordinates does not bifurcate the critical points in $C_k(m)$. Note that from (\ref{eq:lagrangemultipliersplit}) in proof Lemma \ref{lem:criticalsplit}, the split operation only adds redundant constraints. Thus $\criticalpoints{m}$ is smooth almost everywhere and each connected component  $C_k$ has a  dimension $|\mathcal{F}| + |V_k(m_0)|$. 


Hence, the split operation can not bifurcate critical points in  $C_k$ for almost all cases when $u \in T(\mathcal{F})$. Since any region defined by analytical constraints has at most a countable number of connected components, we conclude the split operation does not bifurcate into another critical point for almost all cases when $u \in T(\mathcal{F})$.

 
\end{proof}

\lemsplitminimum*

\begin{proof}
\quad
\begin{enumerate}
    \item Since $(\sfmat^*,\pi^*)$ is a local minimum on $ \realfeasibleregionmlpi{m}{\pi^*}$, hence from Theorem \ref{thm:necessarysecondordercondition}, we have 
\begin{equation*}
v^\top \left(  H^g_{(\sfmat^*,\pi^*)} f_s\right) v \geq 0    
\end{equation*}
for any $v \in C_{ (\sfmat^*,\pi^*)} \realfeasibleregionmlpi{m}{\pi^*}$. Hence  $v^\top \left( H^g_{(\sfmat^*,\pi^*)} f_\epsilon \right) v > 0$. Hence there is a $\lambda_1 > 0$ such that $v^\top \left( H^g_{\theta(\sfmat^*,\pi^*),\lambda_1,k)} f_\epsilon \right) v > 0$ for any  \\ $v \in C_{ \theta(\sfmat^*,\pi^*),\lambda_1,k)} \realfeasibleregionmlpi{m}{\pi^*}$.

\item Let $U(\pi^*) \subset \convexcombm{m}$ be an open neighborhood such that for any $\pi \in U(\pi^*)$  there is an isolated local minimum $W^*(\pi)$ of  $f_\epsilon$ on $\realfeasibleregionmlpi{m}{\pi}$.

\item Let $W(\pi,\lambda,k)=\theta(W^*(\pi), \lambda, k)$ and  $\pi(\lambda,k)=\theta(\pi, \lambda, k)$, hence $W(\pi^*,0,k)=(\sfmat^*,\pi^*)$. 
Since $v^\top \left( H^g_{(\sfmat^*,\pi^*)} f_\epsilon \right) v >0$ for any $v \in C_{ (\sfmat^*,\pi^*)} \realfeasibleregionmlpi{m}{\pi^*}$,  there is  $\lambda_1 >0$ such that  $v^\top \left(H^g_{\theta((\sfmat^*,\pi^*),\lambda_1,k)} f_\epsilon \right) v >0$ for any 
\begin{equation*}
v \in C_{W(\pi^*,\lambda_1,k)} \realfeasibleregionmlpi{m}{\pi^*(\lambda_1,k)}    
\end{equation*}
and  for all $k \in [m]$. Hence $W(\pi^*,\lambda_1,k)= \theta((\sfmat^*,\pi^*),\lambda_1,k)$.

\item We claim that $W(\pi^*,\lambda_1/2,k)= \theta((\sfmat^*,\pi^*),\lambda_1/2,k)$ is a local minimum  of  $f_\epsilon$ on $\realfeasibleregionml{m+1}$. Let $V$  be an open neighborhood of $W(\pi^*,\lambda_1/2,k)$. By contradiction, let $(\sfmat',\pi') \in \realfeasibleregionml{m+1}$ be a different point from $W(\pi^*,\lambda_1/2,k)$ in a neighborhood $V$ of $W(\pi^*,\lambda_1/2,k)$ such that  . 
\begin{equation}
    \label{eq:inequality001}
    f_\epsilon((\sfmat',\pi')) < f_\epsilon(W(\pi^*,\lambda_1/2,k)).
\end{equation}
Since $V$ can be arbitrary small, $\pi'$ can be arbitrary close to  $\theta(\pi,\lambda_1/2,k)$. Hence there is a $\pi'' \in U(\pi^*) \subset \convexcombm{m}$ such that $\pi' = \theta(\pi'',\lambda_2/2,k)$ and for some $\lambda_2 \in (0,\lambda_1)$.
    
By definition,  $ W^*(\pi'')$ is a local minimum of $f_\epsilon$ on $\realfeasibleregionmlpi{m}{\pi''}$. Hence, we have the chain of inequalities,
\begin{multline}
f_\epsilon(W(\pi^*,\lambda_1/2,k))= f_\epsilon((\sfmat^*,\pi^*)) \\ \leq f_\epsilon(W^*(\pi'')) = f_\epsilon(W(\pi'',\lambda_2/2,k))   \leq f_\epsilon((\sfmat',\pi')).
\end{multline}
Hence we have $f_\epsilon(W(\pi^*,\lambda_1/2,k)) \leq f_\epsilon((\sfmat',\pi'))$ which contradicts (\ref{eq:inequality001}). Hence $W(\pi^*,\lambda_1/2,k)=\theta((\sfmat^*,\pi^*),\lambda_1/2,k)$ is a local minimum of  $f_\epsilon$ on $\realfeasibleregionml{m+1}$ and thus we set $\lambda_0$ to $ \lambda_1/2$.  

\end{enumerate}

\end{proof}

\thmsplitminimum*
\begin{proof}
\quad

\begin{enumerate}


\item Let $f_\epsilon$ be a perturbed $f_s$ given in Definition \ref{def:fdperturbation}. From Lemma \ref{lem:splitminimum}, there is $\lambda_0 > 0$ such that $(\sfmat,\theta(\pi,\lambda_0,k))$ is a local  minimum of $f_\epsilon$ on $\realfeasibleregionml{m+1}$.

\item Now, we claim that $\theta((\sfmat,\pi),\lambda,k)$ is a local  minimum of $f_\epsilon$ on $\realfeasibleregionml{m+1}$ for all $\lambda \in (0,1)$. 
   
Since $u \in Reg^{(m)}(\mathcal{F})$, the tangent space $T_{(\sfmat,\theta(\pi^*,\lambda,k))} \realfeasibleregionml{m+1}$ is defined for all $\lambda \in (0,1)$. Let $U$ be a local neighborhood of $(\sfmat, \theta(\pi^*,\lambda_0, k ))$ in $ \realfeasibleregionml{m+1}$. 

Let $\phi:U \to \realfeasibleregionml{m+1}$ be a local diffeomorphism such that 
\begin{equation*}
\phi( (\sfmat, \theta(\pi^*,\lambda_0, k )) \,)=(\sfmat, \theta(\pi^*,\lambda, k )).    
\end{equation*} 
Hence $\phi^{-1}: \phi(U) \to U$ is a diffeomorphism. From Lemma \ref{lem:diffeomorphismpreservation}, local diffeomorphism preserves critical points and $H^g_{(\sfmat, \theta(\pi^*,\lambda, k ))} f_\epsilon  $ is semidefinite positive on $T_{(\sfmat, \theta(\pi^*,\lambda, k ))} \realfeasibleregionml{m+1}$ iff $H^g_{(\sfmat, \theta(\pi^*,\lambda_0, k ))} f_\epsilon \circ \phi^{-1} $ is semidefinite positive on $T_{(\sfmat, \theta(\pi^*,\lambda_0, k ))} \realfeasibleregionml{m+1}$.

Hence to prove that $\theta( (\sfmat^*,\pi^*),\lambda, k ))$ is local minimum of $f_\epsilon$ in $\realfeasibleregionml{m+1}$, it is sufficient to prove that $\theta( (\sfmat^*,\pi^*),\lambda_0, k ))$ is a local minimum of $f_\epsilon \circ \phi^{-1}$. 
 
 Note that for every open neighborhood $V \subset \phi(U)$, we have $\phi^{-1}(V) \subset U$ and since $\theta( (\sfmat^*,\pi^*),\lambda_0, k ))$ is a local minimum of $f_\epsilon$ in $U$, thus  $\theta( (\sfmat^*,\pi^*),\lambda_0, k ))$ is a local minimum of $f_\epsilon \circ \phi^{-1}$ in $V \subset \realfeasibleregionml{m+1}$ for any $\lambda \in (0,1)$.



\begin{equation*}
\end{equation*} 



\end{enumerate}
\end{proof}

\lemcontinuity*
\begin{proof}

Let $W \in  \realgraphonspacel$ and $V \in U$ where $Y$ is an open neighborhood of $W$ sufficiently small. To prove both cases, it is sufficient to prove that there is  a constant $C >0$ such that 
\begin{equation*}
|f_s(W) - f_s(V)| \leq  C \|W-V \|_1 \mbox{ and }  |I(W) - I(V)| \leq  C \|W-V \|_1
\end{equation*}
Thus we have  
\begin{eqnarray*}
&& |f_s(W) - f_s(V)|= \left| \sum_{k=1}^r \int_{[0,1]^{d_k}} \left(f_0(W_k(x_1, \cdots x_{d_k} ))-f_0(V_k(x_1, \cdots x_{d_k}) \right) dx_1, \cdots dx_{d_k}\right| \\
&&\leq \sum_{k=1}^r \int_{[0,1]^{d_k}} |f_0(W_k(x_1, \cdots x_{d_k}))-f_0(V_k(x_1, \cdots x_{d_k}))| dx_1, \cdots dx_{d_k}  \\
&&\leq  C \sum_{k=1}^r \int_{[0,1]^{d_k}} |W_k(x_1, \cdots x_{d_k})-V_k(x_1, \cdots x_{d_k})| dx_1, \cdots dx_{d_k} =   C \|W-V \|_1.
\end{eqnarray*}
In the second case.  Note that $|I'_0(W)|$ and $|I'_0(V)|$ can be bounded by $|I'_0(\Delta(W,V))|$ where 
\begin{eqnarray*}
\Delta(W,V) = \min_{k \in [r]} \inf_{(x_1, \cdots x_{d_k}) \in {[0,1]}^{d_k} }  \{|W_k(x_1, \cdots, x_{d_k})|, \\|1-W_k(x_1, \cdots, x_{d_k})|,|V_k(x_1, \cdots, x_{d_k})|,|1-V°k(x_1, \cdots, x_{d_k})|\}    
\end{eqnarray*}
and $I'_0(x) = \log\left( \frac{x}{1-x} \right) $. Thus 
\begin{eqnarray*}
&&|I(W) - I(V)|= \left| \sum_{k=1}^r \int_{[0,1]^{d_k}} \left(I_0(W_k(x_1, \cdots x_{d_k}))-I_0(V_k(x_1, \cdots x_{d_k})) \right) dx_1, \cdots dx_{d_k} \right| \\
&&\leq \sum_{k=1}^r\int_{[0,1]^{d_k}} |I_0(W_k(x_1, \cdots x_{d_k}))-I_0(V_k(x_1, \cdots x_{d_k}))| dx_1, \cdots dx_{d_k}  \\
&&\leq \sum_{k=1}^r |I'_0(\Delta(W,V))| \int_{[0,1]^{d_k}} |W_k(x_1, \cdots x_{d_k})-V_k(x_1, \cdots x_{d_k})| dx_1, \cdots dx_{d_k} \\
&=&   |I'_0(\Delta(W,V))| \cdot \|W-V \|_1.
\end{eqnarray*}

\end{proof}

\thmPOD*

\begin{proof}
\quad
\begin{enumerate}
    \item 
Let $W_{m}$ be a global minimum of $f_s$ on $\realfeasibleregionml{m}$. 
By iterating  Theorem  \ref{thm:localminimumsplit}, 
starting at $m=m_0(\mathcal{F})$, we obtain a sequence of local minima $(W_m)$ that of the same step function $W_{m_0}$.  Hence $W_{m_0}$ is a global minimum of $f_s$ in $\cup_{m\geq m_0} \realfeasibleregionml{m}$.

\item We claim that $W_{m_0}$ is a global minimum of $\min_{W \in \realfeasibleregionl} f_s(W)$. By contradiction let assume that $W_1$ is a global minimum such that $c_1= f_s(W_1) < f_s(W_{m_0})=a$. From Lemma \ref{lem:continuity}, $f_s$ is continuous, hence the level set $B(\epsilon)=f^{-1}([c,\epsilon)) \cap \realfeasibleregionu$ is open   in $L^1( \prod_k [0,1]^{d_k})$ topology for any $\epsilon >0$. By the density of the step functions  in $L^1(\prod_k  [0,1]^{d_k})$ topology, there is  a step function $W_2  \in B(\epsilon)$ such that $f_s(W_2) < a = f_s(W_{m_0})$  for a sufficient small $\epsilon >0$ which is not possible since $W_0$ is a global minimum of all step functions. Thus $W_{m_0}$ is a global minimum of $f_s$ on $\realfeasibleregionl$ when we consider the neighborhoods of $W_{m_0}$ from  $L^1( \prod_k [0,1]^{d_k})$ topology. The same conclusion holds for cut-norm topology $\| \cdot \|_\Box$ because $L^1(\prod_k [0,1]^{d_k})$ topology is stronger than cut-norm topology, hence every open neighborhood of $W_{m_0}$ includes open neighborhood in cut-norm topology. 


\item Finally, it is sufficient to prove that it is not possible that $\optimalsolutionf{f_s}$ is a non-step function. Let $C^* = f_s(\optimalsolutionf{I})$. 

By contradiction. Let $V$ be a non-step function solution of $\optimalsolutionf{f}$. Hence $f_s(V)=C^*$. Let $d(x,y)=\ \|x-y \|_1$ be the metric on $L^1( \prod_k [0,1]^{d_k})$ norm.

Since $V$ be a non-step function, there is an $\epsilon > 0$ such that $d(V,x)$ for all $x \in \realstepfunctionspacel{m_0(\mathcal{F},u)}$. Hence, $V$ has an open neighborhood $U \subset \realgraphonspacel$ that contains step functions whose size is larger than  $m_0(\mathcal{F},u)$. Note that for every step function $x \in U$, we have $f_s(x) > C^*$ since $C^*$ is the global value.  Since the step functions are dense in $U$ in $L^1(\prod_k  [0,1]^{d_k})$ topology, let $(V_n)$ be a sequence of step functions that converge to $V$. Hence $\lim_{n \to \infty} f_s(V_n)$ is strictly higher than $C^*$ and $f_s(V)=C^*$. This implies that $f_s$ is not continuous. From Lemma  \ref{lem:continuity}, this is a contradiction.

\end{enumerate}

\end{proof}

\lemrandomsolutions*

\begin{proof}

\begin{figure}
    \centering
    \includegraphics[scale=0.5]{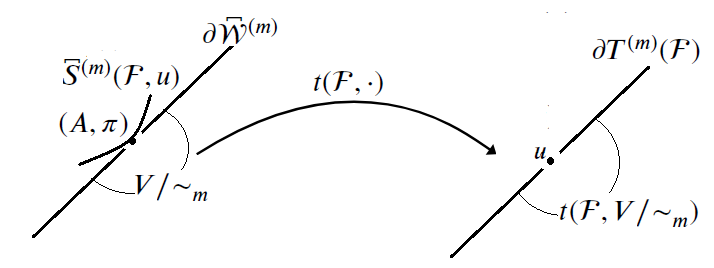}
    \caption{Since  $(\sfmat,\pi) \in \partial \stepfunctionspacel{m} \cap \feasibleregionml{m} $, $t(\mathcal{F},(\sfmat,\pi)) \in \partial T^{(m)}(\mathcal{F})$}
    \label{fig:randomsolutions}
\end{figure}

First we prove that if $(\sfmat,\pi) \in \partial \stepfunctionspacel{m} \cap \feasibleregionml{m} $ and $u \in T^{(m)}(\mathcal{F}) \cap \Omega(\mathcal{F})$ then there is  an open neighborhood $U$ of $(\sfmat,\pi)$  in $\realfeasibleregionml{m} $ such that $U \cap \randfeasibleregionml{m} \neq \emptyset$.

By contradiction. We assume that $U \cap \randfeasibleregionml{m} = \emptyset$. 
It follows that there is neighborhood  $V$ of $(\sfmat,\pi)$ with a non-empty interior in $\stepfunctionspacel{m}$ such that $V \cap \randfeasibleregionml{m} \cap = \emptyset$ as shown in Figure \ref{fig:randomsolutions} for the quotient space. From Lemma \ref{lem:openmarginal}, the image $t(\mathcal{F}, \cdot):\stepfunctionspacel{m} \to \mathbb{R}^{|\mathcal{F}|}$ on any open set has a non-empty interior. Now $t(\mathcal{F},V) \cap \partial T^{(m)}(\mathcal{F}) \neq \emptyset$ since $(\sfmat,\pi) \in \partial \stepfunctionspacel{m}$. Since $V$ can be arbitrary small, we conclude that $t(\mathcal{F},(\sfmat,\pi)) \in \partial T^{(m)}(\mathcal{F})$. This is a contradiction since $u \notin \partial T^{(m)}(\mathcal{F})$. Thus there is  an open neighborhood $U$ of $(\sfmat,\pi)$  in $\realfeasibleregionml{m} $ such that $U \cap \randfeasibleregionml{m} \neq \emptyset$. 

Without loss generalization we consider that $(\sfmat,\pi) \in \partial \stepfunctionspacel{m}$ and $(\sfmat,\pi)$ has only one zero entry i.e. $\sfmat_{1, \cdots 1,1} = 0$. We have proven that there exists a neighborhood $U$ in $\feasibleregionml{m}$ such that $\randfeasibleregionml{m} \cap U \neq \emptyset$. Take one step function $(C,\pi) \in \randfeasibleregionml{m} \cap U \neq \emptyset$. Let $\gamma$ be a rectifiable path connecting $(\sfmat,\pi)$  i.e. $\gamma(0)=(\sfmat,\pi)$ and $\gamma(1)=(C,\pi)$. Thus, it is sufficient to show that the line integral

\begin{equation}
\label{ineqEntropy}
I((\sfmat,\pi)) - I(\gamma(\epsilon))=\int_0^\epsilon dI(\gamma(s);\gamma'(s)) ds 
\end{equation}
is negative for some $\epsilon > 0$,
where $dI(W;V)$ is the the G\^ateaux derivative of $I(\cdot)$ at $W$ in the direction $V=\gamma'$, i.e.
\begin{equation*}
    dI(W;V) = \lim_{\lambda \to 0} \frac{I(W+\lambda V)-I(W)}{\lambda} 
\end{equation*}
hence we obtain 
\begin{equation*}
    dI(W;V) = \sum_{k=1}^r \int_{[0,1]^{d_k}} V_k(x_1, \cdots, x_{d_k}) I'_0(W_k(x_1, \cdots,x_{d_x} )  dx_1, \cdots,dx_{d_x} 
\end{equation*}
where $I'_0(u) = \log \left(\frac{u}{1-u} \right)$. We observe that 
\begin{itemize}
\item If $u \in (0,1)$ is fixed then $I'_0(u)$ is bounded  
\item  $\lim_{u \to 0} I'_0(u)=-\infty$
\item  $\lim_{u \to 1} I'_0(u)=+\infty$
\end{itemize}
Hence the definite integral (\ref{ineqEntropy}) is,
\begin{eqnarray*}
I((\sfmat,\pi)) &-& I(\gamma(\epsilon)) \\
&=&\int_0^\epsilon \left\lbrace \sum_{\tiny \begin{array}{c}
    i_1 \cdots i_{d_{1}}=1  \\
     i_1 \neq 1, \cdots i_{d_1} \neq 1 
\end{array}  }^m I'_0(\gamma(s)_{i_1 \cdots i_{d_1}}) \gamma'(s)_{i_1 \cdots i_{d_1} } \pi_{i_1}  \cdots \pi_{i_{d_1 }}  \right. \\ &&  \left. + I'_0(\gamma(s)_{1 \cdots 1,1}) \gamma'(s)_{1, \cdots 1,1} \pi_{1}^{d_1 }  \right\rbrace ds + K(\epsilon)
\end{eqnarray*}
where 
\begin{equation*}
    K(\epsilon ) = \int_{0}^\epsilon \sum_{k=2}^r \sum_{
    i_1 \cdots i_{d_k}=1 }^m    I'_0(\gamma(s)_{i_1 \cdots i_{d_1}})   \gamma'(s)_{i_1 \cdots i_{d_1} } \pi_{i_1}  \cdots \pi_{i_{d_k }}    ds 
\end{equation*}
and $\lim_{\epsilon \to 0^+} |K(\epsilon)| < \infty$.
Using the Mean Value Theorem, we obtain:
\begin{eqnarray*}
I((\sfmat,\pi)) &-& I(\gamma(\epsilon)) \\
&=& \epsilon \left\lbrace \sum_{\tiny \begin{array}{c}
    i_1 \cdots i_{d_{1}}=1  \\
     i_1 \neq 1, \cdots i_{d_1} \neq 1 
\end{array}  }^m I'_0(\gamma(\alpha)_{i_1 \cdots i_{d_1}}) \gamma'(\alpha)_{i_1 \cdots i_{d_1} } \pi_{i_1}  \cdots \pi_{i_{d_1 }}  \right. \\ &&  \left. + I'_0(\gamma(\alpha)_{1 \cdots 1,1}) \gamma'(\alpha)_{1, \cdots 1,1} \pi_{1}^{d_1 }  \right\rbrace ds + K(\epsilon)
\end{eqnarray*}
where $\alpha \in (0, \epsilon)$.
Note that the sum over all $1 \leq i_1, \cdots, i_{d_1} \leq m$ except $(1,\cdots,1)$ is bounded and the second adding is negatively unbounded when $\epsilon \to 0^+$ and then $\alpha \to 0^+$. Hence  there is a $\alpha >0 $ such that
$I((\sfmat,\pi)) - I(\gamma(\alpha)) <0$. 

\end{proof}

\end{document}